\newcommand{\doublehat}[1]{%
\overline{#1}}
\newcommand{\blue}[1]{\textcolor{blue}{#1}}
\newcommand{\orange}[1]{\textcolor{orange}{#1}}
\newcommand{\purple}[1]{\textcolor{purple}{#1}}
\definecolor{mygreen}{rgb}{0,0.75,0}
\definecolor{mygray}{rgb}{0.75,0.75,0.75}
\newcommand{\mygreen}[1]{\textcolor{mygreen}{#1}}
\newcommand{\bbllt}{\blue{$0$}}
\newcommand{\obllt}{\orange{$+$}}
\newcommand{\gbllt}{\mygreen{$-$}}
\newcommand{\pbllt}{\purple{B}}
\definecolor{fillbg2}{rgb}{0.85, 1, 1}
\newcommand{\msf}{\mathsf}
\newcommand{\X}{\mathsf{X}}
\newcommand{\Y}{\mathsf{Y}}
\newcommand{\Z}{\mathsf{Z}}
\newcommand{\0}{\mathsf{0}}
\newtheorem{theorem}{Theorem}
\newtheorem{lemma}[theorem]{Lemma}
\newtheorem{corollary}[theorem]{Corollary}
\theoremstyle{definition}
\newtheorem{remark}[theorem]{Remark}
\newtheorem{example}[theorem]{Example}
\DeclareMathOperator{\tr}{tr}
\title{Bifurcations in planar, quadratic mass-action networks\\ with few reactions and low molecularity}
\author[1]{Murad Banaji\thanks{Corresponding author.}}
\author[2]{Bal\'azs Boros\thanks{This research was funded in whole or in part by the Austrian Science Fund (FWF) 10.55776/P32532. For open access purposes, the author has applied a CC BY public copyright license to any accepted manuscript version arising from this submission.}}
\author[2]{Josef Hofbauer}
\affil[1]{Mathematical Institute, University of Oxford, United Kingdom}
\affil[2]{Department of Mathematics, University of Vienna, Austria}
\date{}
\begin{document}

\maketitle

\begin{abstract}
In this paper we study bifurcations in mass-action networks with two chemical species and reactant complexes of molecularity no more than two. We refer to these as planar, quadratic networks as they give rise to (at most) quadratic differential equations on the nonnegative quadrant of the plane. Our aim is to study bifurcations in networks in this class with the fewest possible reactions, and the lowest possible product molecularity. We fully characterise generic bifurcations of positive equilibria in such networks with up to four reactions, and product molecularity no higher than three. In these networks we find fold, Andronov--Hopf, Bogdanov--Takens and Bautin bifurcations, and prove the non-occurrence of any other generic bifurcations of positive equilibria. In addition, we present a number of results which go beyond planar, quadratic networks. For example, we show that mass-action networks without conservation laws admit no bifurcations of codimension greater than $m-2$, where $m$ is the number of reactions; we fully characterise quadratic, rank-one mass-action networks admitting fold bifurcations; and we write down some necessary conditions for Andronov--Hopf and cusp bifurcations in mass-action networks. Finally, we draw connections with a number of previous results in the literature on nontrivial dynamics, bifurcations, and inheritance in mass-action networks.  
\end{abstract}

\small \textbf{Keywords:} chemical reaction network, Bogdanov--Takens bifurcation, homoclinic orbit

\section{Introduction}

Although some of our results apply more widely, the main focus of this paper is on bifurcations in planar, quadratic mass-action networks, namely networks involving two chemical species, and with at most bimolecular reactant complexes. Mathematically, such networks give rise to quadratic differential equations on the nonnegative quadrant of $\mathbb{R}^2$. 
%\textcolor{red}{[Mention that planar quadratic dynamical systems are intensively studied? Hilbert's sixteenth?]} 
This paper takes a step towards enumerating planar, quadratic networks which admit local bifurcations of equilibria, and are minimal in the sense of having the fewest reactions and the lowest product molecularity. 

We fully characterise {\em all} planar, quadratic, trimolecular mass-action networks with no more than four reactions admitting generic bifurcations of equilibria. These networks exhibit {\em fold}, {\em Andronov--Hopf}, {\em Bogdanov--Takens}, and {\em Bautin bifurcations} \cite{kuznetsov:2023}, namely, all generic co-dimension $1$ and $2$ bifurcations of equilibria possible in planar networks with the exception of {\em cusp} bifurcations. There is a unique network in this class admitting a Bautin bifurcation; but fold, Andronov--Hopf and Bogdanov--Takens bifurcations occur with greater frequency. On the other hand, we show that cusp bifurcation in a planar, quadratic mass-action network requires at least five reactions (see \Cref{thm:cusp}), and characterising minimal networks exhibiting cusp bifurcation is deferred to future work. 

Let us explain the emphasis on planar, quadratic, trimolecular networks with four reactions. It has been shown previously that planar {\em bimolecular} mass-action networks forbid limit cycles \cite{pota:1983}. Therefore quadratic, trimolecular networks are the simplest planar networks where we can hope to find limit cycles; however, in recent work \cite[Theorem 1]{banaji:boros:hofbauer:2024a}, we showed that networks in this class with no more than three reactions forbid limit cycles. Hence, in seeking bifurcations involving limit cycles we must focus on networks with four or more reactions. In fact, there are 198 dynamically nonequivalent four-reaction networks in this class admitting Andronov--Hopf bifurcation (see \Cref{thm:A-H}). Four of these admit both sub- and supercritical Andronov--Hopf bifurcations but only one admits a Bautin bifurcation leading to two limit cycles.

Unlike Andronov--Hopf bifurcations, fold bifurcations can occur in rank-one networks. In fact, remarkably, quadratic, rank-one, networks admitting fold bifurcation can be fully characterised, and all are closely related to a special one-species, three-reaction network (see \Cref{thm:quadratic_rank-one_fold}). While the rank-two case is more complicated, it follows from \cite[Lemma 3.1]{banaji:boros:2023} that any planar, rank-two network admitting a fold bifurcation must have at least four reactions. We are thus led, again, to planar networks with four reactions. We find that there are 831 dynamically nonequivalent planar, quadratic, trimolecular networks with four reactions admitting a nondegenerate fold bifurcation (\Cref{thm:834_quadratic_trimolecular_fold}). These include the simplest {\em bimolecular} networks admitting a fold bifurcation (\Cref{thm:30_bimolecular_fold}). The enumeration also allows us to characterize the simplest {\em bistable} bimolecular networks (\Cref{lem:bistable}) with two asymptotically stable equilibria, one positive and one at the origin, thus extending and correcting the work of Wilhelm \cite{wilhelm:2009}.

Having enumerated planar, quadratic, trimolecular, four-reaction networks exhibiting fold and Andronov--Hopf bifurcations respectively, it is natural to examine networks in the intersection of these two sets. In doing so, we find 31 dynamically nonequivalent networks admitting a nondegenerate Bogdanov--Takens bifurcation \cite[Section 8.4]{kuznetsov:2023}, and another two admitting a degenerate form of the bifurcation which we refer to as a {\em vertical Bogdanov--Takens bifurcation} (see \Cref{thm:BTmain}). Other than Andronov--Hopf and fold bifurcations of equilibria, nondegenerate Bogdanov--Takens bifurcations are also associated with homoclinic bifurcations \cite[Section 6.2]{kuznetsov:2023}. Examples of reaction systems displaying Bogdanov--Takens or homoclinic bifurcations appear in \cite{hofbauer:schuster:1984}, \cite{kay:1990}, \cite{li:liu:1992}, \cite{olsen:epstein:1993}, \cite{clarke:jiang:1993}, \cite{plesa:vejchodsky:erban:2016}, \cite{delgado:HM:PL:2017}, \cite{kreusser:rendall:2021}, \cite{banaji:boros:hofbauer:2022}, \cite{plesa:2024}.

Our work is probably the first to systematically find small reaction networks admitting Bogdanov--Takens bifurcations. In fact, this work was originally motivated by examining the network
\begin{center}
\begin{tabular}{lll}
$\X \to 2\X \to 3\X$ & $\X+\Y \to 2\Y$ & $\Y \rightleftarrows \0$
\end{tabular}
\end{center}
studied by Frank-Kamenetsky and Salnikov \cite{frank-kamenetsky:salnikov:1943} in 1943. This network is known to admit a supercritical Andronov--Hopf bifurcation, and we found that it also admits a supercritical Bogdanov--Takens bifurcation (to the best of our knowledge this has not appeared in the previous literature). The network is a planar, quadratic, trimolecular network with five chemical reactions, raising the question of whether Bogdanov--Takens bifurcation might be possible in such networks with fewer reactions. Our results answer in the affirmative: as noted above, there are 31 dynamically nonequivalent, planar, quadratic, trimolecular, {\em four-reaction} networks admitting nondegenerate Bogdanov--Takens bifurcations. In fact, the five-reaction network studied by Frank-Kamenetsky and Salnikov \emph{inherits} the Bogdanov--Takens bifurcation from one of these four-reaction networks \cite{banaji:boros:hofbauer:2024b}. 

The rest of this paper is organised as follows. In \Cref{sec:prelim} we set out preliminary definitions and results. In \Cref{sec:fold} we fully characterise fold bifurcation in quadratic, rank-one networks of arbitrary molecularity; and in planar, quadratic networks with up to four reactions and product molecularity up to three. In \Cref{sec:A-H} we study Andronov--Hopf bifurcations in planar, quadratic, trimolecular networks with four reactions. We also observe that there is a unique network in this class admitting a Bautin bifurcation, and several networks admitting vertical Andronov--Hopf bifurcations. In \Cref{sec:B-T} we study Bogdanov--Takens bifurcation in planar, quadratic, trimolecular networks with four reactions, following on very naturally from the study of fold and Andronov--Hopf bifurcations in this class of networks. In \Cref{sec:other} we show that the bifurcations described so far encompass all the generic bifurcations of positive equilibria possible in planar, quadratic, trimolecular networks with up to four reactions. Finally, in \Cref{sec:conclusions} we present some broad conclusions, and directions for the future.

\section{Preliminaries}
\label{sec:prelim}

\subsection{Basic definitions}

Basic notation and terminology follow \cite{banaji:boros:2023} and \cite{banaji:boros:hofbauer:2024a}, and are outlined only briefly. We denote the nonnegative orthant in $\mathbb{R}^n$ (resp., $\mathbb{Z}^n$) by $\mathbb{R}^n_{\geq 0}$ (resp., $\mathbb{Z}^n_{\geq 0}$), and the positive orthant in $\mathbb{R}^n$ by $\mathbb{R}^n_+$. The symbol ``$\circ$'' is used to denote the entrywise product of vectors. Functions, particularly the natural logarithm $\ln(\cdot)$, will often be applied to vectors entrywise. Given $x := (x_1, \ldots, x_n)^\mathrm{t}$ and $a:= (a_1, \ldots, a_n)$, we write $x^a:= x_1^{a_1}x_2^{a_2}\cdots x_n^{a_n}$. If $A$ is an $m \times n$ matrix with rows $A_1, \ldots, A_m$, we write $x^A:= (x^{A_1}, \ldots, x^{A_m})^\mathrm{t}$. With these conventions, $(x^A)^B = x^{BA}$, and $\ln(x^A) = A\,\ln x$. We write $\bm{1}$ to denote a vector of ones of length to be inferred from the context.

\subsubsection*{Chemical reaction networks} A {\em complex} is a formal linear combination with nonnegative integer coefficients on some set of (chemical) {\em species}. The coefficient of a species in a complex is its {\em stoichiometry}. The {\em molecularity} of a complex is the sum of the stoichiometries of species in this complex. We refer to a complex as {\em bimolecular} if it has molecularity at most two, and {\em trimolecular} if it has molecularity at most three. A (chemical) {\em reaction} on this set of species is an ordered pair of complexes termed the {\em reactant complex} and {\em product complex}, where we always assume that the two complexes are different. A chemical reaction is {\em quadratic} if its reactant complex is bimolecular. It is {\em bimolecular} (resp., {\em trimolecular}) if all of its complexes are bimolecular (resp., trimolecular). A chemical reaction network (CRN, or network for short) is defined as a set of reactions on a set of species. It is {\em quadratic} (resp., {\em bimolecular}, resp., {\em trimolecular}) if all of its reactions are quadratic (resp., bimolecular, resp., trimolecular).

Consider a CRN with $n$ species and $m$ reactions. We may assume at the outset a fixed ordering on the species and reactions, in which case the network is associated with various $n \times m$ matrices: the {\em reactant matrix} $\Gamma_\ell$, where $(\Gamma_\ell)_{i,j}$ is the stoichiometry of the $i$th species in the reactant complex of the $j$th reaction; the {\em product matrix} $\Gamma_r$, where $(\Gamma_r)_{i,j}$ is the stoichiometry of the $i$th species in the product complex of the $j$th reaction; and the {\em stoichiometric matrix} $\Gamma := \Gamma_r-\Gamma_\ell$. The columns of $\Gamma$ are the {\em reaction vectors} of the associated reactions, the span of the reaction vectors is called the \emph{stoichiometric subspace}, and the rank of $\Gamma$ is termed the {\em rank} of the network. Intersections of cosets of $\mathrm{im}\,\Gamma$ with $\mathbb{R}^n_{\geq 0}$ are termed {\em stoichiometric classes}; and intersections of cosets of $\mathrm{im}\,\Gamma$ with $\mathbb{R}^n_{+}$ are termed {\em positive stoichiometric classes}. We refer to a CRN with $n$ chemical species, $m$ chemical reactions, and rank $r$, as an $(n,m,r)$ network.

\subsubsection*{Mass-action networks} An $(n,m,r)$ CRN with mass-action kinetics, or a {\em mass-action network} for short, can be defined by its stoichiometric matrix $\Gamma$, its reactant matrix $\Gamma_\ell$, and a vector of rate constants $\kappa \in \mathbb{R}^m_+$. It gives rise to the ODE system
\[
\dot x = \Gamma(\kappa \circ x^A)
\]
where $x \in \mathbb{R}^n_{\geq 0}$ is the vector of species concentrations, and $A:= \Gamma_\ell^\mathrm{t}$. We refer to the $i$th chemical species as {\em trivial} if each entry in the $i$th row of $\Gamma$ is zero. In this case, $\dot{x}_i\equiv 0$, and therefore $x_i$ is constant over time. (In fact this holds true for any choice of reaction rates, not necessarily mass action.) 

We say that a mass-action network {\em admits} some dynamical behaviour if this behaviour occurs for some choice of rate constants; and it admits some bifurcation if this bifurcation occurs on some stoichiometric class as we vary its rate constants. A necessary and sufficient condition for a mass-action network to admit positive limit sets is that it must be {\em dynamically nontrivial}, namely $\mathrm{ker}_+\,\Gamma:=\mathrm{ker}\,\Gamma \cap \mathbb{R}^m_{+}$ is nonempty (see the discussion in \cite{banaji:boros:hofbauer:2024a}). Clearly a dynamically nontrivial $(n,m,r)$ network must have $m > r$.

\subsection{Isomorphism and equivalence of networks}
\label{sec:isomorph}
Two CRNs are {\em isomorphic} if some reordering/renaming of the species and reactions of one network gives rise to the other. Weaker notions of equivalence are possible amongst mass-action networks. For example, two networks on the same set of species which give rise to the same set of mass-action differential equations (perhaps after permuting species and reactions), are termed {\em unconditionally confoundable}, or {\em dynamically equivalent} \cite{craciun:pantea:2008}. Note that this is a rather restrictive condition, allowing changes of parameters, but not coordinates. Whenever a mass-action network is dynamically equivalent to a network with fewer reactions, i.e., some reaction vector of the network lies in the positive span of other reaction vectors on the same complex, we say that the network includes {\em redundant reactions}.

Here we are concerned with equivalences amongst mass-action networks with the same reactant matrix $\Gamma_\ell$ (perhaps after permuting species and reactions). Consider two such networks with $n$ species and $m$ reactions giving rise to ODE systems
\begin{equation}
\label{eq:sameA}
\dot x = \Gamma(\kappa \circ x^A) =:f(x, \kappa) \quad \mbox{and} \quad \dot y = \widehat{\Gamma}(\hat{\kappa}\circ y^A) =:g(y,\hat{\kappa})
\end{equation}
respectively. We refer to these networks as {\em smoothly equivalent} if a smooth reparameterisation and recoordinatisation takes trajectories of one to those of the other. More precisely, the networks are smoothly equivalent if we can find a smooth diffeomorphism $G \colon \mathbb{R}^m_+ \to \mathbb{R}^m_{+}$ and a smooth (parameter-dependent) recoordinatisation $F \colon \mathbb{R}^n_{\geq 0} \times \mathbb{R}^m_+ \to \mathbb{R}^n_{\geq 0}$ such that $F_\kappa(\cdot) := F(\cdot, \kappa)$ is a diffeomorphism on $\mathbb{R}^n_{\geq 0}$ for each $\kappa \in \mathbb{R}^m_+$, and such that $(x,\kappa) \mapsto (F(x,\kappa),G(\kappa))$ takes trajectories of the first network to the second, i.e., 
\[
g(F(x,\kappa), G(\kappa)) = D_xF(x,\kappa) f(x,\kappa)\,.
\]
%\[
%g(y,\hat{\kappa}) = D_xF(F_{G^{-1}(\hat{\kappa})}^{-1}(y),G^{-1}(\hat{\kappa}))f(F_{G^{-1}(\hat{\kappa})}^{-1}(y), G^{-1}(\hat{\kappa}))\,.
%\]
%\blue{Maybe better 
%\[
%g(y,\hat{\kappa}) = D_xF(x,\kappa) f(x,\kappa) \hbox{ with } \kappa = G^{-1}(\hat{\kappa}) \hbox{ and }   x = F_{\kappa}^{-1} (y).
%\]
%or 
%\[
%g(F(x,\kappa), G(\kappa)) = D_xF(x,\kappa) f(x,\kappa).
%\]}

Clearly, smoothly equivalent networks are dynamically equivalent after a smooth (parameter-dependent) recoordinatisation. The simplest sufficient condition for smooth equivalence of two networks as in \eqref{eq:sameA} is if, perhaps after permuting columns of $\Gamma$ corresponding to the same reactant complex, $\widehat{\Gamma} = \Gamma\,D$, where $D$ is a positive diagonal matrix. In this case we call the two networks {\em simply equivalent}. We note that simply equivalent networks are dynamically equivalent \cite[Theorem A.3]{banaji:boros:2023}. Generalising simple equivalence, we say that the two networks in \eqref{eq:sameA} are {\em diagonally equivalent} if, perhaps after permuting columns of $\Gamma$ corresponding to the same reactant complex, $\widehat{\Gamma} = D_1\,\Gamma\,D_2$, where $D_1$ and $D_2$ are positive diagonal matrices. Diagonally equivalent networks are smoothly equivalent as we show in the next lemma. 

\begin{lemma}
Diagonally equivalent mass-action networks are smoothly equivalent. 
\end{lemma}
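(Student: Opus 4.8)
The plan is to write down explicit maps $F$ and $G$ realising the smooth equivalence, the point being that the row-scaling $D_1$ amounts to a linear rescaling of species concentrations, while the column-scaling $D_2$ --- together with the distortion of the monomials $x^A$ caused by that rescaling --- can be absorbed into the rate constants.

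First I would dispose of the column permutation. Permuting columns of $\Gamma$ among reactions sharing a reactant complex, and permuting the corresponding entries of $\kappa$, leaves the vector field $\Gamma(\kappa\circ x^A)$ unchanged (those columns are multiplied by the same monomial, and the reactant matrix $\Gamma_\ell$, hence $A$, is unchanged as well); this is a mere relabelling of reactions, giving an isomorphic and hence trivially smoothly equivalent network. Since smooth equivalence is transitive (compose the diffeomorphisms and apply the chain rule), we may assume from the outset that $\widehat\Gamma = D_1\Gamma D_2$, with $D_1 = \mathrm{diag}(d)$ for a vector $d\in\mathbb{R}^n_+$ and $D_2 = \mathrm{diag}(e)$ for a vector $e\in\mathbb{R}^m_+$.

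Next I would set $F(x,\kappa):= D_1 x$, independent of $\kappa$; since $D_1$ is positive diagonal, each $F_\kappa$ is a linear diffeomorphism of $\mathbb{R}^n_{\geq 0}$ onto itself and $D_xF\equiv D_1$. Writing $A_j$ for the $j$th row of $A$ and $d^A:=(d^{A_1},\dots,d^{A_m})^{\mathrm t}\in\mathbb{R}^m_+$, the entrywise-power conventions of \Cref{sec:prelim} (e.g.\ applying $\ln$) give the key identity $(D_1x)^A = d^A\circ x^A$. I would then define $G(\kappa):=(e\circ d^A)^{-1}\circ\kappa$, with the inverse taken entrywise; as $e\circ d^A$ is a fixed positive vector, $G$ is a linear diffeomorphism of $\mathbb{R}^m_+$ onto itself.

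Finally I would verify the defining identity $g(F(x,\kappa),G(\kappa)) = D_xF(x,\kappa)f(x,\kappa)$. The right-hand side is $D_1\Gamma(\kappa\circ x^A)$, while, using $\widehat\Gamma = D_1\Gamma D_2$, the identity $(D_1x)^A = d^A\circ x^A$, and $D_2 v = e\circ v$, the left-hand side is
\[
\widehat\Gamma\big(G(\kappa)\circ(D_1x)^A\big) = D_1\Gamma\big(e\circ(e\circ d^A)^{-1}\circ d^A\circ\kappa\circ x^A\big) = D_1\Gamma(\kappa\circ x^A),
\]
since $e\circ(e\circ d^A)^{-1}\circ d^A = \bm{1}$; so the two sides agree. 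I do not anticipate a genuine obstacle here: the whole content is the observation in the first paragraph, and the only points requiring care are the bookkeeping around the permutation and deriving $(D_1x)^A = d^A\circ x^A$ from the entrywise conventions.
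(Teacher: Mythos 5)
Your proof is correct and follows essentially the same route as the paper's: the recoordinatisation $y=D_1x$ and the reparameterisation $\hat\kappa=d_2^{-1}\circ\kappa\circ d_1^{-A}$ are exactly the maps used there, with your $(D_1x)^A=d^A\circ x^A$ identity playing the same role. Your explicit handling of the column permutation is a small extra bit of bookkeeping the paper leaves implicit, but the substance is identical.
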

\begin{proof}
Consider two diagonally equivalent networks giving rise to ODE systems as in \eqref{eq:sameA} with $\widehat{\Gamma} = D_1\,\Gamma\,D_2$, where $D_1$ and $D_2$ are positive diagonal matrices. Define $d_1\in \mathbb{R}^n_+$ and $d_2\in \mathbb{R}^m_+$ by $D_1 = \mathrm{diag}(d_1)$ and $D_2 = \mathrm{diag}(d_2)$. Defining $y = d_1\circ x$ and $\hat{\kappa} = d_2^{-1}\circ \kappa \circ d_1^{-A}$, we see that
\[
\dot y = d_1\circ \dot{x} = d_1\circ \Gamma(\kappa \circ d_1^{-A}\circ y^A) = d_1\circ \Gamma(d_2\circ \hat{\kappa} \circ y^A)= \widehat{\Gamma}(\hat{\kappa} \circ y^A),
\]
confirming that the smooth, bijective, change of parameters and coordinates $(x,\kappa) \mapsto (d_1\circ x, \,d_2^{-1}\circ \kappa \circ d_1^{-A})$, takes trajectories of one system into those of the other.
\end{proof}

\begin{example}
The following two non-isomorphic mass-action networks appear in \Cref{tab:B-T} below as examples of quadratic, trimolecular $(2,4,2)$ networks admitting Bogdanov--Takens bifurcation:
\begin{center}
\begin{tabular}{*{4}{r@{\hspace{4pt}$\to$\hspace{4pt}}l}}
\arrayrulecolor{mygray}
$2\X$ & $3\X$ & $\X+\Y$ & $2\X$ & $\0$ & $\X+2\Y$ & $\X$ & $\0$ \\
\hline
\multicolumn{8}{c}{} \\[-10pt]
$2\X$ & $3\X$ & $\X+\Y$ & $3\X$ & $\0$ & $\X+\Y$ & $\X$ & $\0$ \\
\end{tabular}
\end{center}
%\[
%2\X\longrightarrow 3\X, \quad \X+\Y \longrightarrow 2\X, \quad 0 \longrightarrow \X+2\Y, \quad \X \longrightarrow 0\,,
%\]
%and
%\[
%2\X\longrightarrow 3\X, \quad \X+\Y \longrightarrow 3\X, \quad 0 \longrightarrow \X+\Y, \quad \X \longrightarrow 0\,.
%\]
A helpful graphical representation of each network, its {\em Euclidean embedded graph} \cite{craciun:2019}, is shown below. 
\begin{center}
\begin{tikzpicture}
\tikzset{bullet/.style={inner sep=1.5pt,outer sep=1.5pt,draw,fill,blue,circle}};
\tikzset{myarrow/.style={arrows={-stealth},very thick,blue}};
\draw [step=1, gray, very thin] (0,0) grid (3.5,2.5);
\draw [ -, black] (0,0)--(3.5,0);
\draw [ -, black] (0,0)--(0,2.5);

\node[bullet] (P1) at (2,0) {};
\node[bullet] (P2) at (3,0) {};
\node[bullet] (P3) at (1,1) {};
\node[bullet] (P4) at (1,0) {};
\node[bullet] (P5) at (0,0) {};
\node[bullet] (P6) at (1,2) {};

\node [below]       at (P1) {$2\X$};
\node [below]       at (P2) {$3\X$};
\node [above right] at (P3) {$\X+\Y$};
\node [below]       at (P4) {$\X$};
\node [below left]  at (P5) {$\0$};
\node [above right] at (P6) {$\X+2\Y$};

\draw[myarrow] (P1) to node {} (P2);
\draw[myarrow] (P3) to node {} (P1);
\draw[myarrow] (P5) to node {} (P6);
\draw[myarrow] (P4) to node {} (P5);

\begin{scope}[xshift=5cm]
\draw [step=1, gray, very thin] (0,0) grid (3.5,2.5);
\draw [ -, black] (0,0)--(3.5,0);
\draw [ -, black] (0,0)--(0,2.5);

\node[bullet] (P1) at (2,0) {};
\node[bullet] (P2) at (3,0) {};
\node[bullet] (P3) at (1,1) {};
\node[bullet] (P4) at (1,0) {};
\node[bullet] (P5) at (0,0) {};

\node [below]      at (P1) {$2\X$};
\node [below]      at (P2) {$3\X$};
\node [above left] at (P3) {$\X+\Y$};
\node [below]      at (P4) {$\X$};
\node [below left] at (P5) {$\0$};

\draw[myarrow] (P1) to node {} (P2);
\draw[myarrow] (P3) to node {} (P2);
\draw[myarrow] (P5) to node {} (P3);
\draw[myarrow] (P4) to node {} (P5);
\end{scope}

\end{tikzpicture}
\end{center}
It is easily confirmed by examining their stoichiometric matrices that the two networks are diagonally equivalent. 
\end{example}

\subsection{Subnetworks, enlargements and inheritance} \label{subsec:inheritance}

The most useful notion of ``subnetwork'' in the theory of CRNs is dependent on the application, with the caveat that the relationship of being a subnetwork should induce a partial ordering on the set of CRNs. While there is no one correct notion of subnetwork in CRNs, there are natural subnetwork relationships between CRNs, of importance for the results in this paper. 

Given CRNs $\mathcal{R}_1$ and $\mathcal{R}_2$, following the terminology introduced in \cite{banaji:pantea:2018}, we say that $\mathcal{R}_1$ is an {\em induced subnetwork} of $\mathcal{R}_2$ if $\mathcal{R}_1$ can be obtained from $\mathcal{R}_2$ by deleting species and reactions from $\mathcal{R}_2$. Note that deleting a species means removing it from every reaction in which it occurs; and if doing so results in reactions with identical reactant and product complexes, we remove these too. 

Going beyond induced subnetworks, we have previously written down a set of enlargements of mass-action networks, denoted E1--E6, which preserve their capacity for various dynamical behaviours and bifurcations \cite{banaji:2023}, \cite{banaji:boros:hofbauer:2024b}. The key results are summarised in the following lemma on the {\em inheritance} of nondegenerate behaviours, and of bifurcations, amongst mass-action networks.
\begin{lemma}
\label{lem:inherit}
Consider mass-action networks $\mathcal{R}_1$ and $\mathcal{R}_2$ with $\mathcal{R}_1$ being a subnetwork of $\mathcal{R}_2$ in the sense that $\mathcal{R}_2$ can be obtained from $\mathcal{R}_1$ via a sequence of enlargements of the form E1--E6. If $\mathcal{R}_1$ admits multiple positive nondegenerate equilibria; a nondegenerate periodic orbit; or a local bifurcation of equilibria transversely unfolded by its rate constants, then the same holds for $\mathcal{R}_2$.
\end{lemma}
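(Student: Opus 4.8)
The plan is to reduce to the case of a single enlargement and then combine the enlargement-wise inheritance statements already established in \cite{banaji:2023} and \cite{banaji:boros:hofbauer:2024b}. By definition, ``$\mathcal{R}_2$ is obtained from $\mathcal{R}_1$ by a sequence of enlargements of the form E1--E6'' means there is a chain $\mathcal{R}_1 = \mathcal{S}_0 \rightsquigarrow \mathcal{S}_1 \rightsquigarrow \cdots \rightsquigarrow \mathcal{S}_k = \mathcal{R}_2$ in which each step is a single enlargement of one of the six types. Each of the three properties in the statement --- the existence of two or more nondegenerate positive equilibria on a stoichiometric class, the existence of a hyperbolic (equivalently, nondegenerate) periodic orbit, and the presence of a local bifurcation of equilibria transversally unfolded by the rate constants --- is a property of an individual mass-action system. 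Hence it suffices to prove that each of these properties is preserved under one enlargement of each of the types E1--E6, and then induct on the length $k$ of the chain; the inductive step is immediate once the single-enlargement claim is in hand.

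So fix one enlargement $\mathcal{R}_1 \rightsquigarrow \mathcal{R}_2$ of type E$j$, $j\in\{1,\dots,6\}$. In each case the enlargement either appends a reaction whose reaction vector stands in a prescribed relation to the existing ones, or introduces a new species occurring in a controlled fashion. The common mechanism is that, for a suitable choice of the new rate constants (taken large or small according to the enlargement type), the vector field of $\mathcal{R}_2$ possesses an invariant set --- a coordinate subspace, or a normally hyperbolic invariant manifold produced by a fast--slow or center-manifold reduction --- on which the restricted dynamics agrees, up to a parameter-dependent diffeomorphism and a reparametrisation of rate constants, with the dynamics of $\mathcal{R}_1$. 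From this one reads off the three conclusions. For (i): a nondegenerate positive equilibrium of $\mathcal{R}_1$ lifts to one of $\mathcal{R}_2$ whose Jacobian is block-triangular, with the ``old'' block unchanged and the ``new'' block made hyperbolic by the choice of the extra rate constants; so nondegeneracy is preserved, and (via the implicit function theorem in the slaved coordinates) so is the count of such equilibria in a given stoichiometric class, in particular the property of having at least two. For (ii): a hyperbolic periodic orbit of the reduced system persists in $\mathcal{R}_2$ by normal hyperbolicity. For (iii): at a bifurcation value, the center manifold of $\mathcal{R}_2$ is diffeomorphic to that of $\mathcal{R}_1$ and carries the same reduced vector field, so the same local bifurcation occurs.

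The substantive point --- and the step I expect to be the main obstacle --- is the \emph{transversality} clause in (iii): one must verify that the unfolding of the bifurcation by the larger rate-constant vector of $\mathcal{R}_2$ is still transversal, i.e.\ that the map from the $\mathcal{R}_2$ parameter space into the space of normal-form coefficients is submersive onto the directions required by the codimension of the bifurcation. Here the precise form of each of E1--E6 matters: the natural inclusion (and reparametrisation) sending the rate constants of $\mathcal{R}_1$ into those of $\mathcal{R}_2$ must be shown compatible with the respective center-manifold reductions, so that the $\mathcal{R}_2$ unfolding map, restricted to the image of this inclusion, coincides up to reparametrisation with the $\mathcal{R}_1$ unfolding map. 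Surjectivity of the derivative of the latter then forces surjectivity of the derivative of the former. Carrying this out uniformly across all six enlargement types is exactly the content of \cite{banaji:boros:hofbauer:2024b} (with the equilibria and periodic-orbit parts already in \cite{banaji:2023}), and the present lemma is the straightforward induction built on top of those enlargement-wise results.
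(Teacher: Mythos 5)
Your proposal is correct and takes essentially the same route as the paper, whose proof of this lemma is simply a citation to \cite{banaji:2023} and \cite{banaji:boros:hofbauer:2024b}, where the enlargement-wise inheritance results (including the transversality of the unfolding) are established. Your additional sketch of the induction on the chain of enlargements and of the normal-hyperbolicity/center-manifold mechanism is consistent with how those references argue, so nothing is missing.
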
 
\begin{proof}
These claims and their proofs appear in \cite{banaji:2023}, \cite{banaji:boros:hofbauer:2024b}, and a number of references therein. 
\end{proof}

\begin{remark}
Neither the list of dynamical behaviours which can be inherited in \Cref{lem:inherit}, nor the list of enlargements under which the conclusions of \Cref{lem:inherit} hold, are exhaustive.
\end{remark}

Consider some $(n,m,r)$ mass-action network with a dynamical behaviour of interest. If the behaviour is not inherited from any smaller network via \Cref{lem:inherit} we refer to the network as an {\em atom} of this behaviour.

For completeness, we list the enlargements E1, E2, E3, and E6 of a CRN $\mathcal{R}$ which appear explicitly in our results or remarks to follow. For more details, the reader is referred to \cite{banaji:2023} or \cite{banaji:boros:hofbauer:2024b}.
\begin{enumerate}
\item[E1.] {\em A new linearly dependent reaction.} We add to $\mathcal{R}$ a new reaction involving only existing chemical species of $\mathcal{R}$, and in such a way that the rank of the network is preserved.
\item[E2.] {\em The fully open extension.} We add in (if absent) all chemical reactions of the form $\0 \rightarrow \X_i$ and $\X_i \rightarrow \0$ for each chemical species $\X_i$ of $\mathcal{R}$.
\item[E3.] {\em A new linearly dependent species.} We add a new chemical species into some nonempty subset of the reactions of $\mathcal{R}$, in such a way that the rank of the network is preserved.
\item[E6.] {\em Splitting reactions.} We split some reactions of $\mathcal{R}$ and insert complexes involving at least as many new species as the number of reactions split. Moreover, the new species figure nontrivially in the enlarged CRN in the sense that the submatrix of the new stoichiometric matrix corresponding to the added species has rank equal to the number of reactions which are split.
\end{enumerate}

\begin{remark}
\label{rem:induced}
If we work within a class of networks of {\em fixed rank}, then $\mathcal{R}_1$ is an induced subnetwork of $\mathcal{R}_2$ if and only if $\mathcal{R}_2$ can be obtained from $\mathcal{R}_1$ via a sequence of enlargements E1 and E3. Consequently, by \Cref{lem:inherit}, mass-action networks inherit nondegenerate dynamical behaviours and bifurcations from induced subnetworks of the same rank.
\end{remark}

\subsection{Existence and nondegeneracy of positive equilibria in mass-action networks}

For the remainder of this section, we fix the following notation for an arbitrary mass-action network:
\begin{itemize}
\item The stoichiometric matrix is denoted by $\Gamma$.
\item The (column) vector of mass-action rate constants is denoted by $\kappa$.
\item The reactant matrix is denoted by $\Gamma_\ell$, and we write $A:=\Gamma_\ell^\mathrm{t}$ for brevity.
\item $W^\mathrm{t}$ is any matrix whose columns form a basis of $\mathrm{ker}[A\,|\,\bm{1}]^\mathrm{t}$, so that $W\,[A\,|\,\bm{1}] = 0$.

\end{itemize}

\subsubsection*{The positive part of the kernel of the stoichiometric matrix} In a dynamically nontrivial $(n,m,r)$ CRN, $\mathrm{ker}_+\,\Gamma:=\mathrm{ker}\,\Gamma \cap \mathbb{R}^m_{+}$ is a proper cone of dimension $m-r$ in $\mathbb{R}^m$. It is useful to parameterise $\mathrm{ker}_+\,\Gamma$ as follows. We first parameterise an arbitrary cross-section, say $\mathcal{C}$, of $\mathrm{ker}_+\,\Gamma$, via a linear injective map $h \colon Y \subseteq \mathbb{R}^{m-r-1} \to \mathbb{R}^m_{+}$ with image $\mathcal{C}$. We then parameterise $\mathrm{ker}_+\,\Gamma$ as
\[
\mathrm{ker}_+\,\Gamma = \{\lambda h(\alpha)\,|\,\lambda \in \mathbb{R}_+, \alpha \in Y\}\,.
\]
Note that this parameterisation is valid in the special case where $m-r=1$, provided we now interpret $h(\alpha)$ as a constant vector, say $v \in \mathbb{R}^m_+$, in which case we have
\[
\mathrm{ker}_+\,\Gamma = \{\lambda v\,|\,\lambda \in \mathbb{R}_+\}\,.
\]

 \subsubsection*{The solvability condition for positive equilibria} Consider a dynamically nontrivial $(n,m,r)$ mass-action network and fix $\kappa \in \mathbb{R}^m_+$. Then $x \in \mathbb{R}^n_+$ is an equilibrium of the network with these rate constants if and only if $\kappa \circ x^A \in \mathrm{ker}_+\,\Gamma$, namely, after taking logarithms and rearranging, if there exist $\alpha \in Y$ and $\lambda \in \mathbb{R}_+$ such that
\begin{equation}
\label{eq:main}
\ln\,\kappa + [A\,|\,\bm{1}]\left(\begin{array}{r}\ln x\\-\ln \lambda\end{array}\right) = \ln h(\alpha)\,.
\end{equation}
By the Fredholm alternative, equation \eqref{eq:main} admits solutions if and only if the following solvability condition holds:
\begin{equation}
\label{eq:solvability}
W\,\ln\kappa = W\ln h(\alpha) \quad \mbox{i.e.,} \quad \kappa^W = h(\alpha)^W\,.
\end{equation}
Note that in the case that $r=m-1$, $h(\alpha)$ is a constant vector; and if $[A\,|\,\bm{1}]$ has rank $m$ so that $W$ is empty, we take the solvability condition to be trivially satisfied.
%$[A\,|\,\bm{1}]$ is a surjective matrix, so for each fixed $\kappa$ and $\alpha$ we can find $x$ and $\lambda$ such that the equation holds.

\subsubsection*{The Jacobian matrix of a mass-action network} The Jacobian matrix of a mass-action network at any point in the positive orthant takes the form
\begin{equation}
\label{eq:MAJac0}
J = \Gamma D_1 A D_2
\end{equation}
where $D_1$ and $D_2$ are positive diagonal matrices (depending on rate constants and state vector) and, as usual, $A = \Gamma_\ell^\mathrm{t}$ \cite{banaji:donnell:baigent:2007}. The Jacobian matrix of a dynamically nontrivial $(n,m,n)$ mass-action network, {\em evaluated at a positive equilibrium}, takes the specific form
\begin{equation}
\label{eq:MAJac}
J= \lambda \Gamma D_{h(\alpha)}AD_{1/x}\,,
\end{equation}
where $D_{h(\alpha)}$ is the positive diagonal matrix with $(D_{h(\alpha)})_{i,i} = h_i(\alpha)$, and $D_{1/x}$ is the positive diagonal matrix with $(D_{1/x})_{i,i} = 1/x_i$ \cite[Section 2.3]{banaji:boros:2023}.

We refer to an equilibrium of a mass-action network as {\em nondegenerate} if the Jacobian matrix, evaluated at the equilibrium, acts as a nonsingular transformation on the stoichiometric subspace. For characterisations of this property in terms of the so-called {\em reduced Jacobian determinant}, see \cite[Section 2.2]{banaji:pantea:2016}. Clearly, for an $(n,m,n)$ network, an equilibrium is nondegenerate if and only if the Jacobian matrix evaluated at the equilibrium is nonsingular. 

\begin{lemma}
\label{lem:degen} 
An $(n,m,n)$ CRN such that $\mathrm{rank}\,[A\,|\,\bm{1}] \leq n$ admits no positive nondegenerate equilibria. 
\end{lemma}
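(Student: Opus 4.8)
The plan is to show that, at \emph{any} positive equilibrium the network might possess, the Jacobian matrix is singular; since for an $(n,m,n)$ network an equilibrium is nondegenerate precisely when the Jacobian there is nonsingular, this proves the lemma (and if the network has no positive equilibrium the claim holds vacuously). Observe that if a positive equilibrium exists, then $\mathrm{ker}_+\,\Gamma$ is nonempty, so the network is dynamically nontrivial and the formula \eqref{eq:MAJac} applies: the Jacobian at the equilibrium is $J = \lambda\,\Gamma D_{h(\alpha)} A D_{1/x}$ with $\lambda > 0$ and $D_{h(\alpha)}, D_{1/x}$ positive diagonal. Since $D_{1/x}$ is invertible, it suffices to show that the $n \times n$ matrix $M := \Gamma D_{h(\alpha)} A$ is singular.

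I would then split into two cases. If $\mathrm{rank}\,A < n$, then $\mathrm{rank}\,M \le \mathrm{rank}\,A < n$, so $M$ is singular. If $\mathrm{rank}\,A = n$, then since adjoining a column cannot decrease the rank and $\mathrm{rank}\,[A\,|\,\bm{1}] \le n$ by hypothesis, we get $\mathrm{rank}\,[A\,|\,\bm{1}] = n = \mathrm{rank}\,A$, hence $\bm{1} \in \mathrm{im}\,A$; as $A$ (being $m \times n$ of rank $n$) is injective, there is a unique $y \in \mathbb{R}^n$ with $Ay = \bm{1}$, and $y \ne 0$ because $\bm{1} \ne 0$. The key identity is then
\[
My = \Gamma D_{h(\alpha)} A y = \Gamma D_{h(\alpha)}\bm{1} = \Gamma h(\alpha) = 0,
\]
where the last equality holds because $h(\alpha) \in \mathrm{ker}_+\,\Gamma$; this exhibits a nonzero vector in $\mathrm{ker}\,M$, so $M$ is singular in this case too. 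In either case $J$ is singular at every positive equilibrium, which is exactly the desired conclusion.

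I do not expect a genuine obstacle: the proof is short, and the only point requiring care is the rank bookkeeping when $\mathrm{rank}\,A = n$ — extracting $\bm{1} \in \mathrm{im}\,A$ from the hypothesis and checking that its preimage is nonzero — together with remembering to treat the degenerate case $\mathrm{rank}\,A < n$ separately, since there $\bm{1}$ need not lie in $\mathrm{im}\,A$ and one instead uses a nonzero vector of $\mathrm{ker}\,A$. One could also merge the two cases by noting that $\mathrm{rank}\,[A\,|\,\bm{1}] \le n$ forces the $n+1$ columns of $[A\,|\,\bm{1}]$ to be linearly dependent, producing $(y,c) \ne (0,0)$ with $Ay = -c\,\bm{1}$; then $y \ne 0$ (if $c = 0$ this is automatic, and if $c \ne 0$ then $Ay = -c\,\bm{1} \ne 0$), and $My = \Gamma D_{h(\alpha)} A y = -c\,\Gamma h(\alpha) = 0$, so $M$ is singular.
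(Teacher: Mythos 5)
Your proof is correct and follows essentially the same route as the paper's: both use the equilibrium Jacobian formula $J=\lambda\,\Gamma D_{h(\alpha)}AD_{1/x}$, split into the cases $\mathrm{rank}\,A<n$ and $\bm{1}\in\mathrm{im}\,A$, and in the latter case exhibit a kernel vector via $\Gamma D_{h(\alpha)}\bm{1}=\lambda^{-1}\cdot\lambda\Gamma h(\alpha)=0$. The only (minor, welcome) additions are your explicit remark that a positive equilibrium forces dynamical nontriviality so the formula applies, and the optional merged-case variant.
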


\begin{proof} Recall that the Jacobian matrix at a positive equilibrium has the form in \eqref{eq:MAJac}, namely, $J = \lambda \Gamma D_{h(\alpha)} A D_{1/x}$. Note that $\mathrm{rank}\,[A\,|\,\bm{1}] \leq n$ implies that either $\mathrm{rank}\,A <n$, or $\bm{1} \in \mathrm{im}\,A$ and we will show that in both cases $J$ is identically singular. If $\mathrm{rank}\,A <n$, then $J$ is clearly singular for all $\alpha \in Y$ and $x \in \mathbb{R}^n_+$ as it includes a factor of rank less than $n$. If $\mathrm{rank}\,A =n$ and $\bm{1} \in \mathrm{im}\,A$, say $\bm{1} = Az$ for some $z \in \mathbb{R}^n$, then again $J$ is singular as, for any $\alpha \in Y, x \in \mathbb{R}^n_+$, 
\[
\lambda \Gamma D_{h(\alpha)} A D_{1/x} (D_{1/x}^{-1}z) = \lambda \Gamma D_{h(\alpha)}\bm{1} = \lambda \Gamma h(\alpha) = 0\,,
\]
i.e., the nonzero vector $D_{1/x}^{-1}z$ lies in $\mathrm{ker}\,J$. 
\end{proof}

\begin{remark}
It is possible to show more generally that an $(n,m,r)$ mass-action network with $\mathrm{rank}\,[A\,|\,\bm{1}] \leq r$ admits positive equilibria for only an exceptional set of rate constants, and that these equilibria are always degenerate. The condition $\mathrm{rank}\,[A\,|\,\bm{1}] \leq r$ is sufficient, but not necessary, for the degeneracy of all equilibria.
\end{remark}

\subsection{Natural coordinates for $(n,m,n)$ mass-action networks admitting positive nondegenerate equilibria}
\label{sec:coords}
In this section, we consider an arbitrary $(n,m,n)$ mass-action network satisfying the two conditions (i) $m-n \geq 1$, and (ii) $\mathrm{rank}\,[A\,|\,\bm{1}] = n+1$. Note that the first condition holds if the network is dynamically nontrivial, a necessary condition for the existence of positive equilibria; and the second condition holds, by \Cref{lem:degen}, if the network admits a positive nondegenerate equilibrium. 

Any mass-action network satisfying conditions (i) and (ii) admits natural and useful parameter-dependent recoodinatisations, generalising the recoordinatisation presented for $(n,n+1,n)$ networks in \cite[Section 3.2]{banaji:boros:2023}. An important consequence (\Cref{thm:bifcodim} below) is that $(n,m,n)$ mass-action networks give rise, effectively, to $(m-2)$-parameter families of ODEs and hence can admit no bifurcations of codimension greater than $m-2$ unfolded by their rate constants. Another consequence (\Cref{thm:cusp} below) of particular interest in the context of this paper is that $(n,n+2,n)$ mass-action networks cannot admit cusp bifurcations. 

We assume that $m>n+1$; the easier case where $m=n+1$ is essentially covered in \cite[Section 3.2]{banaji:boros:2023}, or follows by setting the matrices $U$ and $W$ in what follows to be empty. Let $U$ be any $m \times (m-n-1)$ matrix such that $[A\,|\,\bm{1}\,|\,U]$ is nonsingular (such $U$ exists as $[A\,|\,\bm{1}]$ has rank $n+1$ by assumption). Let $\doublehat{G}:= [A\,|\,\bm{1}\,|\,U]^{-1}$, and write
\[
\doublehat{G} = \left(\begin{array}{c}G\\v\\W\end{array}\right) 
\]
where $G$ refers to the top $n \times m$ block of $\doublehat{G}$, and $v$ is its $(n+1)$th row. The columns of $W^\mathrm{t}$ clearly form a basis of $\mathrm{ker}\,[A\,|\,\bm{1}]^{\mathrm{t}}$.
%if desired, we could first choose a basis of $\mathrm{ker}\,[A\,|\,\bm{1}]^{\mathrm{t}}$ to get $W$, then set $\left(\begin{array}{c}G\\v\end{array}\right)$ to be some left-inverse of $[A\,|\,\bm{1}]$ and finally compute $U$. In any case, 
As
\[
\left(\begin{array}{c}G\\v\\W\end{array}\right)\,(A\,|\,\bm{1}\,|\,U) = (A\,|\,\bm{1}\,|\,U)\,\left(\begin{array}{c}G\\v\\W\end{array}\right) = I_{m \times m}\,,
\]
it follows that
\begin{equation}
\label{eq:identity}
I_{m \times m} - AG = \bm{1}\,v + U\,W\,. 
\end{equation}
Now define the new coordinates $y$ on $\mathbb{R}^n_+$ by $y = \kappa^G\,\circ x$. We obtain the ODE for $y$ as follows:
\begin{eqnarray*}
  \dot y = \kappa^G \circ \dot x
         & = & \kappa^G \circ \Gamma\,(\kappa \circ (\kappa^{-G}\,\circ y)^A)\\
         & = & \kappa^G \circ \Gamma\,(\kappa^{I_{m \times m}-AG} \circ y^A)\\
         & = & \kappa^G \circ \Gamma\,(\kappa^{\bm{1}\,v + UW} \circ y^A) \quad \mbox{(from \eqref{eq:identity})}\\
  & = & \kappa^G \circ \Gamma\,((\kappa^v)^{\bm{1}} \circ (\kappa^W)^U \circ y^A)\\
         & = & \kappa^G \circ (\kappa^v)\bm{1} \circ \Gamma\,((\kappa^W)^U \circ y^A) \quad \mbox{(since $\kappa^{\bm{1}\,v} = (\kappa^v)^{\bm{1}} = \kappa^v\,\bm{1}$)}\\
         & = & \kappa^{G+\bm{1}v} \circ \Gamma\,((\kappa^W)^{U} \circ y^A)\,.
\end{eqnarray*}
Note that $\kappa^W$ is a vector of $m-n-1$ new positive parameters which we will refer to as the {\em inner parameters}, while $\kappa^{G+\bm{1}v}$ is a vector of $n$ new positive parameters which we will refer to as the {\em outer parameters}. We may subsequently rescale time to reduce the number of outer parameters by $1$ leaving an $(m-2)$-parameter family of ODEs. Moreover, only the $m-n-1$ inner parameters directly affect the equilibrium set; the remaining parameters act via a positive diagonal transformation on the whole vector field.

\begin{remark}
The matrices $U$ and $W$ in the construction above are not uniquely determined, and consequently, there is some freedom in the choice of inner parameters. It is possible that appropriate choices may simplify computations for certain networks. For example, planar S-systems (which arise from $(2,4,2)$ mass-action networks with a special stoichiometric matrix) can be written without inner parameters, see \cite[Eq.\ (2)]{boros:hofbauer:mueller:regensburger:2019}.
\end{remark}

\begin{remark}
In $y$ coordinates, positive equilibria occur when
\[
(\kappa^W)^{U} \circ y^A \in \ker_+\,\Gamma\,, \quad \mbox{namely,} \quad UW\,\ln\kappa + A\ln y = \bm{1}\ln \lambda  + \ln h(\alpha).
\]
Multiplying through by $W$ and noting that $WU$ is the identity, $WA = 0$, and $W\bm{1} = 0$ we obtain, as expected, the solvability condition \eqref{eq:solvability}, namely $\kappa^W = h(\alpha)^W$.
\end{remark}

\begin{example}
Consider the mass-action network
\[
2\X \overset{\kappa_1}{\longrightarrow} 3\X \qquad \X+\Y \overset{\kappa_2}{\longrightarrow} 2\Y \qquad \Y \overset{\kappa_3}{\longrightarrow} \0 \qquad \0 \overset{\kappa_4}{\longrightarrow} \Y
\]
Associated with the network are the following stoichiometric matrix and reactant matrix:
\[
\Gamma = \left(\begin{array}{rrrr}1&-1&0&0\\0&1&-1&1\end{array}\right),\quad \Gamma_\ell = \left(\begin{array}{rrrr}2&1&0&0\\0&1&1&0\end{array}\right)\,.
\]
The network gives rise to the ODE system $\dot x = \Gamma(\kappa \circ x^A)$ where, as usual, $A:= \Gamma_\ell^\mathrm{t}$. Following the construction above, we can define
\[
\overline{G} = \left(\begin{array}{rrrr}1&-1&1&-1\\0&0&1&-1\\1&-2&2&0\\-1&2&-2&1\end{array}\right), \quad \overline{G}^{-1} = \left(\begin{array}{rrrr}2&0&1&2\\1&1&1&2\\0&1&1&1\\0&0&1&1\end{array}\right).
\]
With these choices we obtain the ODE system
\begin{equation}
\label{eq:recoord_example}
\left(\begin{array}{c}\dot X\\\dot Y\end{array}\right)  = \left(\begin{array}{c}\alpha_1\\\alpha_2\end{array}\right) \circ \left(\begin{array}{rrrr}1&-1&0&0\\0&1&-1&1\end{array}\right)\left(\begin{array}{c}\alpha_3^2X^2\\\alpha_3^2XY\\\alpha_3 Y\\\alpha_3\end{array}\right) = \left(\begin{array}{c}\alpha_1(\alpha_3^2X^2 - \alpha_3^2XY)\\\alpha_2(\alpha_3^2XY - \alpha_3Y + \alpha_3)\end{array}\right)\,,
\end{equation}
where 
\[
X = \kappa_1\kappa_2^{-1}\kappa_3\kappa_4^{-1}x, \quad Y = \kappa_3\kappa_4^{-1}y, \quad \alpha_1 = \kappa_1^2\kappa_2^{-3}\kappa_3^3\kappa_4^{-1}, \quad \alpha_2 = \kappa_1\kappa_2^{-2}\kappa_3^3\kappa_4^{-1}, \quad \alpha_3 = \kappa_1^{-1}\kappa_2^{2}\kappa_3^{-2}\kappa_4\,.
\]
Here $\alpha_1$ and $\alpha_2$ are the outer parameters, while $\alpha_3$ is the inner parameter. After some grouping of parameters and time-rescaling, \eqref{eq:recoord_example} simplifies to
\[
\begin{array}{rcl}\dot X &=& X^2 - XY\,,\\\dot Y & = & \beta(\gamma XY - Y + 1)\,,\end{array} 
\]
namely, we have only a two-parameter family of ODEs, with only one parameter affecting the equilibrium set.
\end{example}

\subsection{The number of positive nondegenerate equilibria in $(n,m,n)$ mass-action networks}

\begin{lemma}
\label{lem:biject}
Consider a dynamically nontrivial $(n,m,n)$ mass-action network such that $\mathrm{rank}\,[A\,|\,\bm{1}] = n+1$. 
%Let $W^\mathrm{t}$ be any matrix whose columns form a basis of $\mathrm{ker}[A\,|\,\bm{1}]^\mathrm{t}$. 
For any fixed $\kappa$, there is a smooth bijection between the set of positive equilibria and solutions $\alpha \in Y$ to the solvability equation \eqref{eq:solvability}, namely $\kappa^W = h(\alpha)^W$. 
\end{lemma}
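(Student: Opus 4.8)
The plan is to exhibit the bijection explicitly using the natural coordinates of \Cref{sec:coords}, and to verify smoothness and invertibility of the relevant maps. Fix $\kappa \in \mathbb{R}^m_+$. By the solvability discussion surrounding \eqref{eq:main} and \eqref{eq:solvability}, a point $x \in \mathbb{R}^n_+$ is an equilibrium precisely when there exist $\alpha \in Y$ and $\lambda \in \mathbb{R}_+$ with $\ln\kappa + [A\,|\,\bm{1}](\ln x, -\ln\lambda)^\mathrm{t} = \ln h(\alpha)$, and such $(\alpha,\lambda)$ exist iff the solvability condition $\kappa^W = h(\alpha)^W$ holds for that $\alpha$. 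So I first want to show: for each $\alpha \in Y$ satisfying $\kappa^W = h(\alpha)^W$, there is a \emph{unique} pair $(x,\lambda) \in \mathbb{R}^n_+ \times \mathbb{R}_+$ solving \eqref{eq:main}. Uniqueness and existence here both follow from the fact that $[A\,|\,\bm{1}]$ has rank $n+1 = $ its number of columns (hypothesis $\mathrm{rank}\,[A\,|\,\bm{1}]=n+1$ together with $m-n\ge1$), so $[A\,|\,\bm{1}]$ is left-invertible; composing $\ln h(\alpha) - \ln\kappa$ with a left inverse recovers $(\ln x, -\ln\lambda)^\mathrm{t}$, and the Fredholm alternative guarantees the recovered vector actually solves the equation when $\alpha$ satisfies solvability. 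This already gives a well-defined map $\Phi$ from the solution set of \eqref{eq:solvability} to the equilibrium set: $\alpha \mapsto x$ where $x$ is this unique solution (discarding $\lambda$).

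Next I would construct the inverse and argue bijectivity. Given a positive equilibrium $x$, we have $\kappa \circ x^A \in \ker_+\Gamma$, so by the parameterisation of $\ker_+\Gamma$ there is a unique $\lambda \in \mathbb{R}_+$ and a unique $\alpha \in Y$ (uniqueness of $\alpha$ because $h$ is injective and the cross-section $\mathcal{C}$ meets each ray of $\ker_+\Gamma$ exactly once) with $\kappa \circ x^A = \lambda h(\alpha)$; this $\alpha$ automatically satisfies $\kappa^W = h(\alpha)^W$. Define $\Psi(x) = \alpha$. That $\Psi \circ \Phi = \mathrm{id}$ and $\Phi \circ \Psi = \mathrm{id}$ is immediate from the uniqueness statements on both sides. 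Concretely, it is cleanest to pass through the $y = \kappa^G \circ x$ coordinates: there, as noted in the remark following the recoordinatisation, positive equilibria are exactly the solutions of $A \ln y = \bm{1}\ln\lambda + \ln h(\alpha) - UW\ln\kappa$, and $[A\,|\,\bm1]$ being left-invertible lets one solve for $(\ln y, \ln\lambda)$ as an explicit affine function of $\ln h(\alpha)$ (and $\ln\kappa$), which then pushes back to $x$ via $x = \kappa^{-G}\circ y$.

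For smoothness: $\Phi$ is a composition of $h$ (linear, hence smooth), the logarithm, a fixed linear map (a left inverse of $[A\,|\,\bm{1}]$), translation by the constant $-\ln\kappa$, projection onto the $x$-block, and the exponential — all smooth on the relevant positive domains, so $\Phi$ is smooth. For $\Psi$: $x \mapsto \kappa \circ x^A$ is smooth into $\ker_+\Gamma$, and then one needs that the "angular" projection of $\ker_+\Gamma$ onto the cross-section $\mathcal{C}$ followed by $h^{-1}$ is smooth; since $\mathcal{C}$ is cut out of $\ker_+\Gamma$ by an affine hyperplane transverse to every ray, the scaling factor $\lambda$ depends smoothly (indeed real-analytically) on the point, so $\Psi$ is smooth.

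The main obstacle I anticipate is purely bookkeeping rather than conceptual: one must be careful that the parameterisation $\mathrm{ker}_+\Gamma = \{\lambda h(\alpha) : \lambda \in \mathbb{R}_+, \alpha \in Y\}$ is genuinely a bijective parameterisation (each positive kernel vector has a \emph{unique} $(\lambda,\alpha)$), which relies on the cross-section $\mathcal{C}$ being chosen to meet each extreme ray — more precisely each ray through a point of $\mathrm{ker}_+\Gamma$ — exactly once, and on $h$ being injective; and one must handle the degenerate edge case $m - r = 1$ (where $h(\alpha) \equiv v$ is constant, $Y$ is a point, and the statement reduces to "there is at most one positive equilibrium, and it exists iff $\kappa^W = v^W$"), as well as the case where $W$ is empty (solvability vacuous). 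None of these is hard, but they should be stated explicitly so the bijection is honestly well-defined; the interesting content — that the map is well-defined and bijective — is really just the left-invertibility of $[A\,|\,\bm1]$ feeding the Fredholm alternative, exactly as set up in the preceding subsections.
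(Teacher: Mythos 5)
Your proposal is correct and follows essentially the same route as the paper's proof: uniqueness of $(x,\lambda)$ for fixed $\alpha$ from the full column rank (left-invertibility) of $[A\,|\,\bm{1}]$, injectivity in the $\alpha$ direction from the fact that $h$ parameterises a cross-section of $\mathrm{ker}_+\,\Gamma$ (so distinct $\alpha$'s cannot yield proportional kernel vectors), and smoothness via the explicit formula $x=(h(\alpha)/\kappa)^G$ with $G$ a left inverse block of $[A\,|\,\bm{1}]$. The only material you add is an explicit check that the inverse map $x\mapsto\alpha$ is also smooth, which the paper leaves implicit.
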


\begin{proof}
%Write a general point in $\mathrm{ker}_+\,\Gamma$ as $\lambda\,h(\alpha)$, where $h(\alpha)$ is an injective linear map from $Y \subseteq \mathbb{R}^{m-n-1}$ to $\mathbb{R}^m_+$ as detailed earlier and $\lambda \in \mathbb{R}_+$. 
We present the proof in the case $m \geq n+2$; the case $m=n+1$ is easy. 
%$h(\alpha)$ is a constant vector, and solvability is trivially satisfied in this case and we get a unique equilibrium
Recall that given any fixed $\kappa \in \mathbb{R}^m_+$, $x \in \mathbb{R}^n_+$ is a positive equilibrium of the CRN if and only if there exist $\lambda \in \mathbb{R}_+$ and $\alpha \in Y$ such that $(x,\lambda,\alpha)$ satisfies equation \eqref{eq:main}, which occurs if and only if $\kappa^W = h(\alpha)^W$. Note that if, for fixed $\kappa$ and $\alpha$, there exists $(x, \lambda)$ solving \eqref{eq:main} then, by injectivity of the linear transformation corresponding to $[A\,|\,\bm{1}]$, these are unique. Moreover, given $(\kappa, x_1, \lambda_1, \alpha_1)$ and $(\kappa, x_2, \lambda_2, \alpha_2)$ solving \eqref{eq:main} with $\alpha_1 \neq \alpha_2$, we must have $x_1 \neq x_2$, for otherwise a quick computation reveals that $h(\alpha_1) = (\lambda_2/\lambda_1) h(\alpha_2)$ which is impossible as $h(\alpha_1)$ and $h(\alpha_2)$ are distinct points on a {\em cross-section} of $\mathrm{ker}_+\,\Gamma$. These observations confirm that solutions $\alpha \in Y$ to $\kappa^W = h(\alpha)^W$ are, in fact, in one-to-one correspondence with equilibria. In order to see that the correspondence is smooth, we can explicitly write 
\[
x = (h(\alpha)/\kappa)^G\,,
\]
where $G$ is the top $n \times m$ block of any left inverse of $[A\,|\,\bm{1}]$.
\end{proof} 

The previous lemma allows us to see that an $(n,m,n)$ network with an insufficient number of distinct reactant complexes, forbids multiple positive nondegenerate equilibria.
\begin{theorem}
\label{thm:nplus2sources}
An $(n,m,n)$ mass-action network with fewer than $n+2$ distinct reactant complexes admits no more than one positive nondegenerate equilibrium. 
\end{theorem}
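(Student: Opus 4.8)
The plan is to invoke \Cref{lem:biject}: for a dynamically nontrivial $(n,m,n)$ network with $\mathrm{rank}\,[A\,|\,\bm 1]=n+1$, the positive equilibria are in smooth bijection with the solutions $\alpha\in Y$ of the solvability equation $\kappa^W=h(\alpha)^W$. (If $\mathrm{rank}\,[A\,|\,\bm 1]\le n$, then by \Cref{lem:degen} the network has no positive nondegenerate equilibria at all, so the conclusion is vacuous; hence we may assume $\mathrm{rank}\,[A\,|\,\bm 1]=n+1$ throughout.) So it suffices to show that, when the number of distinct reactant complexes is at most $n+1$, the solvability equation has at most one solution $\alpha\in Y$ — or, more precisely, at most one solution that corresponds to a nondegenerate equilibrium.

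First I would reduce to the case where all $m$ reactant complexes are distinct. If two reactions share the same reactant complex, the corresponding two columns of $A$ coincide; one can merge such reactions into a single ``reaction'' whose reaction vector is an arbitrary positive combination of the two original reaction vectors (this is exactly how diagonal/simple equivalence and redundancy operate), without changing the solution set of $\kappa\circ x^A\in\ker_+\Gamma$ qua subset of $\mathbb{R}^n_+$. After this reduction we may assume the reactant complexes are the $k\le n+1$ distinct columns of $A$, and the key point becomes: the matrix $[A\,|\,\bm 1]$ has at most $k\le n+1$ distinct columns as well, but it has rank $n+1$, so in fact $k=n+1$ and $[A\,|\,\bm 1]$ is (after reordering) a square invertible matrix of size $n+1$. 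In particular $m=n+1$ for the reduced network, so $\ker\Gamma$ is one-dimensional, $h(\alpha)$ is a constant vector $v$, the parameter set $Y$ is a point, and the solvability equation has a unique solution $\alpha$. Tracing back through \Cref{lem:biject}, this gives a unique positive equilibrium for the reduced network, hence (since the reduction only collapsed reactions on a common complex and took positive combinations of reaction vectors) at most one positive equilibrium for the original network.

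The one subtlety to be careful about is the bookkeeping in the reduction step: when we merge reactions on a common reactant complex, the merged network is dynamically equivalent (after choosing the positive combination appropriately, it is even simply equivalent in the sense of \Cref{sec:isomorph}) to the original on $\mathbb{R}^n_+$, so equilibria — and their nondegeneracy, since the Jacobian is unchanged on $\mathbb{R}^n_+$ — are in exact correspondence. I would also note that one must confirm the merged network is still dynamically nontrivial and still has rank $n$; both follow because $\ker_+\Gamma$ is preserved under taking positive combinations of columns sharing the same $A$-column, and the rank of $\Gamma$ cannot drop below $n$ once we know a positive nondegenerate equilibrium exists.

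The main obstacle I anticipate is making the merging step fully rigorous while preserving \emph{nondegeneracy} of equilibria, rather than just their existence: one has to check that collapsing redundant reactions does not turn a nondegenerate equilibrium into a degenerate one or vice versa, and that the count ``at most one nondegenerate equilibrium'' for the reduced $(n,n+1,n)$ network (which follows from \Cref{lem:biject} since $Y$ is a singleton) transfers back. An alternative route that sidesteps the merging entirely is to argue directly: with at most $n+1$ distinct reactant complexes but $\mathrm{rank}[A\,|\,\bm 1]=n+1$, the columns of $h(\alpha)^W$ — equivalently the image of the cross-section $\mathcal C$ under the monomial map $z\mapsto z^W$ — collapse to a single point because $W$ annihilates the (at most $n+1$)-dimensional column space spanned by the distinct complexes together with $\bm 1$; hence $h(\alpha)^W$ is independent of $\alpha$, so the solvability equation either has no solution or holds for all $\alpha$, and in the latter case injectivity of $h$ forces... — but this needs more care and I would fall back on the merging argument as the cleaner presentation.
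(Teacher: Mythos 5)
Your overall strategy---for each fixed $\kappa$, collapse the reactions sharing a reactant complex into a single term and thereby reduce to an effective $(n,n+1,n)$ system with $[A\,|\,\bm{1}]$ square and invertible---is genuinely different from the paper's argument and can be made to work, but the justification of the merging step is wrong as written. Merging two reactions with a common reactant complex but non-proportional reaction vectors is \emph{not} a dynamical or simple equivalence in the sense of \Cref{sec:isomorph}: those notions compare the full families of ODEs over all rate constants, and the pair $\X\to 2\X$, $\X\to\0$ contributes $(\kappa_1-\kappa_2)x$ to $\dot x$, which no single mass-action reaction on the reactant $\X$ can reproduce for all $(\kappa_1,\kappa_2)$. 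Likewise the merged reaction vector cannot be ``an arbitrary positive combination'': to preserve the equilibrium set it must be the specific combination $v_c=\sum_{j:\,a_j=a_c}\kappa_j\gamma_j$ determined by the $\kappa$ you fixed. (Also, reactions index \emph{rows} of $A=\Gamma_\ell^{\mathrm{t}}$, not columns.) This $v_c$ depends on $\kappa$, may vanish, and need not equal (product complex) minus (reactant complex) for any legitimate complex, so the merged object is a $\kappa$-dependent formal system $\dot x=\sum_c v_c x^{a_c}$ rather than a mass-action network, and \Cref{lem:biject} does not literally apply to it. All of this is repairable: the proof of \Cref{lem:biject} uses only the algebraic form of the equations; if some $v_c=0$ the remaining at most $n$ vectors either fail to span $\mathbb{R}^n$ (Jacobian everywhere singular) or are linearly independent (no positive dependence, hence no positive equilibrium), so no positive nondegenerate equilibrium exists in that case; and your rank and nontriviality checks go through as sketched. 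But these repairs must be made explicit, and the appeal to simple/dynamical equivalence should be dropped.

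For comparison, the paper avoids merging altogether. It keeps the original network and uses the hypothesis of exactly $n+1$ distinct reactant complexes to choose $W$ whose rows each have a single entry $1$ and a single entry $-1$, one for each pair of coinciding rows of $[A\,|\,\bm{1}]$. The solvability equation $\kappa^W=h(\alpha)^W$ then consists of relations $\kappa_i/\kappa_j=h_i(\alpha)/h_j(\alpha)$; since $h$ is linear, clearing denominators yields a linear system in $\alpha$, which has no solution, a unique solution, or a continuum of solutions, and by \Cref{lem:biject} a continuum of solutions corresponds to a continuum of equilibria, all necessarily degenerate. This stays entirely within the stated hypotheses of \Cref{lem:biject} and requires no case analysis on vanishing combinations; your route, once repaired, buys a slightly more geometric picture (the per-$\kappa$ vector field really is an $(n,n+1,n)$ system) at the cost of working outside the class of mass-action networks. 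The degenerate case of fewer than $n+1$ distinct reactant complexes is dispatched by \Cref{lem:degen} in both approaches.
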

\begin{proof}
Consider an $(n,m,n)$ mass-action network. If it has fewer than $n+1$ distinct reactant complexes, then $\mathrm{rank}\,[A\,|\,\bm{1}]\leq n$ (it has no more than $n$ distinct rows); and equilibria, whenever they exist, are always degenerate by \Cref{lem:degen}. In this case the claim is trivially true. 

So now suppose that the network has exactly $n+1$ distinct reactant complexes and $[A\,|\,\bm{1}]$ has rank $n+1$. If $m=n+1$, the claim follows by \cite[Lemma 3.1]{banaji:boros:2023}. So we assume that $m > n+1$. It is easily seen that $\mathrm{ker}\,[A\,|\,\bm{1}]^\mathrm{t}$ has a basis consisting of $m-n-1$ vectors each containing a single entry $1$, a single entry $-1$, and all other entries $0$ (each such vector corresponds to a pair of identical rows in $[A\,|\,\bm{1}]$). Let $W$ be a matrix whose rows consist of such a set of basis vectors. From \Cref{lem:biject}, equilibria are in smooth one-to-one correspondence with solutions $\alpha$ to the equation $\kappa^W = h(\alpha)^W$. Recalling that $h(\alpha)$ is linear, and clearing denominators, for each fixed $\kappa$ this is a system of $m-n-1$ {\em linear} equations in $m-n-1$ unknowns $\alpha$ which thus admits either no solutions, a unique solution, or a continuum of solutions. Consequently, for each fixed $\kappa$, the CRN either admits no equilibria, one equilibrium or a continuum of equilibria. Since a continuum of equilibria necessarily consists of degenerate equilibria, the claim is proved.
\end{proof}

%\textcolor{red}{[The current formulation suggests that we could have one nondegenerate equilibrium for some choices of parameters and a continuum of degenerate equilibria for other choices. I don't think this can happen. If any of the linear fractional equations in $\kappa^W = h(\alpha)^W$ is singular, then the network admits no equilibria for all but an exceptional set of parameters and a degenerate continuum otherwise; if all are nonsingular, then the network admits a unique nondegenerate equilibrium for all but (possibly) a finite set of parameters.]}

%
%\begin{lemma}\label{lem:4_distinct_sources}
%If a $(2,4,2)$ network has at most three distinct reactant complexes then it has at most one positive nondegenerate equilibrium.
%\end{lemma}
%\begin{proof}
%To be written.
%\end{proof}

\subsection{Lemmas relevant to Andronov--Hopf bifurcation in mass-action networks}

The first lemma in this section tells us that a ``mixed'' reactant complex is necessary for the existence of a periodic orbit or a positive equilibrium with a pair of purely imaginary eigenvalues.

\begin{lemma} \label{lem:nonnegative_off_diagonal}
For a planar mass-action network without a reactant complex which includes both species of the network, the off-diagonal entries of the Jacobian matrix on the positive quadrant are nonnegative. Hence, there is no periodic orbit, and the Jacobian matrix on the positive quadrant can never have a pair of purely imaginary eigenvalues.
%A planar mass-action network admitting a pair of purely imaginary eigenvalues on the positive orthant must have at least one reactant complex which includes both species of the network.
\end{lemma}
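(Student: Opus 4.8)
The plan is to work directly with the explicit form of the Jacobian matrix. For a planar mass-action network, the Jacobian at a point $x$ in the positive quadrant is $J = \Gamma D_1 A D_2$ as in \eqref{eq:MAJac0}, where $D_1, D_2$ are positive diagonal matrices and $A = \Gamma_\ell^{\mathrm{t}}$. Since $D_2$ is a positive diagonal matrix, it does not affect the signs of the off-diagonal entries of $J$, so it suffices to show that the off-diagonal entries of $\Gamma D_1 A$ are nonnegative. Writing this product as a sum over reactions, $\Gamma D_1 A = \sum_{j=1}^m (D_1)_{jj}\, \gamma_j (\gamma_\ell^{(j)})^{\mathrm{t}}$, where $\gamma_j$ is the $j$th reaction vector (the $j$th column of $\Gamma$) and $\gamma_\ell^{(j)}$ is the $j$th reactant complex (the $j$th column of $\Gamma_\ell$). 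So the claim reduces to showing that for each single reaction, the rank-one matrix $\gamma_j (\gamma_\ell^{(j)})^{\mathrm{t}}$ has nonnegative off-diagonal entries, since a sum of such matrices (with positive coefficients) will then also have this property.

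The key observation is the hypothesis: no reactant complex includes both species. So for each reaction $j$, the reactant complex $\gamma_\ell^{(j)} = (a, b)^{\mathrm{t}}$ has $a = 0$ or $b = 0$. The $(1,2)$ off-diagonal entry of $\gamma_j (\gamma_\ell^{(j)})^{\mathrm{t}}$ is $(\gamma_j)_1 \cdot b$, and the $(2,1)$ entry is $(\gamma_j)_2 \cdot a$. If $a = 0$, the reactant complex is $b\mathsf{Y}$ with $b \geq 1$ (or the zero complex, in which case both off-diagonal entries vanish); then the $(2,1)$ entry is zero, and the $(1,2)$ entry is $(\gamma_j)_1 \cdot b$. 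Now I need $(\gamma_j)_1 \geq 0$: but $(\gamma_j)_1$ is the net change in species $\mathsf{X}$ under reaction $j$, and since $\mathsf{X}$ does not appear in the reactant complex $b\mathsf{Y}$, the stoichiometry of $\mathsf{X}$ in the product complex is $\geq 0$ and in the reactant is $0$, hence $(\gamma_j)_1 \geq 0$. Symmetrically, if $b = 0$ the $(1,2)$ entry vanishes and the $(2,1)$ entry is $(\gamma_j)_2 \cdot a \geq 0$. Summing over $j$ with positive weights $(D_1)_{jj}$ gives that both off-diagonal entries of $\Gamma D_1 A$, hence of $J$, are nonnegative.

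For the dynamical consequences: once we know the off-diagonal entries of $J$ are everywhere nonnegative on the positive quadrant, the system is \emph{cooperative} there. By Dulac/Bendixson-type criteria for planar cooperative (more precisely, sign-stable off-diagonal structure) systems — or directly, since a planar system whose Jacobian has nonnegative off-diagonal entries throughout a simply connected region admits no periodic orbit (this is the classical result that planar monotone systems have no periodic orbits, applied on the positive quadrant, which is convex hence simply connected) — there is no periodic orbit. For the eigenvalue claim: a $2\times 2$ real matrix with both off-diagonal entries of the same sign (here, nonnegative) has discriminant $(J_{11}-J_{22})^2 + 4 J_{12}J_{21} \geq 0$, so its eigenvalues are real; in particular they can never be a pair of nonzero purely imaginary numbers. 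I expect the main (though still routine) point requiring care is the bookkeeping in the second paragraph — correctly tracking which entry of each rank-one summand is forced to vanish by the ``no mixed reactant complex'' hypothesis, and confirming the surviving entry is nonnegative because the relevant species is absent from the reactant complex.
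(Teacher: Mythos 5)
Your proposal is correct and follows essentially the same route as the paper: the same reaction-by-reaction expansion of $J=\Gamma D_1 A D_2$, the same key observation that a species absent from a reactant complex has nonnegative net stoichiometric change, and the same cooperative-system conclusion. The only cosmetic difference is in the eigenvalue step, where you use the nonnegative discriminant while the paper notes that $\tr J=0$ and $\det J>0$ cannot hold simultaneously; these are equivalent.
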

\begin{proof}
Recall \eqref{eq:MAJac0}, i.e., the Jacobian matrix of any mass-action network on the positive orthant takes the form $J = \Gamma D A E$ where $A = \Gamma_\ell^\mathrm{t}$ and $D$ and $E$ are positive diagonal matrices. Suppose that the reactant complexes of the network never involve both chemical species, i.e., $A_{j,1}>0 \Rightarrow A_{j,2}=0$ and $A_{j,2}>0 
\Rightarrow A_{j,1}=0$. Consequently, since $A_{j,i}=0$ implies $\Gamma_{i,j}\geq 0$, 
%[$i$th species absent from $j$th reactant complex $\Rightarrow$ no net consumption of $i$th species in $j$th reaction], 
we have $\Gamma_{1,j}A_{j,2} \geq 0$ and $\Gamma_{2,j}A_{j,1} \geq 0$ for all $j = 1, \ldots, m$. Further,  
\[
J_{1,2} = \sum_{j=1}^m \Gamma_{1,j}\,D_{j,j}\,A_{j,2}\,E_{2,2}\, \text{  and  } J_{2,1} = \sum_{j=1}^m \Gamma_{2,j}\,D_{j,j}\,A_{j,1}\,E_{1,1}\,
\]
and hence, $J_{1,2} \geq 0$ and $J_{2,1} \geq 0$.
%Since $\Gamma_{1,j}A_{j,2} \geq 0$ and $\Gamma_{2,j}A_{j,1} \geq 0$, we have $J_{1,2} \geq 0$ and $J_{2,1} \geq 0$.
%and hence $J_{1,2}\,J_{2,1} \geq 0$.

Therefore, the system is cooperative and thus cannot have periodic orbits \cite[Theorem 3.2.2]{smith:1995}. Moreover, $\tr J = 0$ and $\det J > 0$ can never occur simultaneously, and hence, a pair of nonzero imaginary eigenvalues is forbidden.
%Now suppose $\tr J = 0$ and, consequently, $J_{1,1}J_{2,2}\leq 0$. In order to have $\det J > 0$, we must have $J_{1,2}J_{2,1}<0$. But
\end{proof}

\begin{remark}
We note that \Cref{lem:nonnegative_off_diagonal} can be extended (with an analogous proof) to any number of species: if each reactant complex is of the form $k\X_i$ for some $k\geq0$ then the off-diagonal entries of the Jacobian matrix on the positive quadrant are nonnegative. Hence, the system is cooperative, and it can have no attracting limit cycle, see 
\cite[Theorem 1.2.2 and Proposition 4.3.4]{smith:1995}.
\end{remark}

The following result is an immediate consequence of \Cref{lem:nonnegative_off_diagonal} for quadratic networks.

\begin{lemma}\label{lem:mixed_source}
A planar, quadratic mass-action network admitting a periodic orbit must include the reactant complex $\X+\Y$.
\end{lemma}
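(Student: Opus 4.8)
The plan is to derive this as an immediate corollary of \Cref{lem:nonnegative_off_diagonal}. The key observation is that for a \emph{quadratic} network — one in which every reactant complex is bimolecular — a reactant complex on the two species $\X,\Y$ is one of the following six: $\0$, $\X$, $\Y$, $2\X$, $2\Y$, or $\X+\Y$. Only the last of these involves both species simultaneously; the other five are each of the form $k\X$ or $k\Y$ with $k\in\{0,1,2\}$, and so involve at most one species.

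Concretely, I would argue by contraposition. Suppose the network does \emph{not} include the reactant complex $\X+\Y$. Then, by the enumeration above, every reactant complex involves at most one of the two species, i.e.\ for each reaction $j$ we have $A_{j,1}>0 \Rightarrow A_{j,2}=0$ and $A_{j,2}>0 \Rightarrow A_{j,1}=0$. This is precisely the hypothesis of \Cref{lem:nonnegative_off_diagonal}. That lemma then tells us the network has no periodic orbit. Taking the contrapositive gives the claim: a planar, quadratic mass-action network admitting a periodic orbit must include the reactant complex $\X+\Y$.

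There is essentially no obstacle here — the content is entirely in \Cref{lem:nonnegative_off_diagonal}, and the only thing to check is the finite case analysis that the bimolecular complexes on two species other than $\X+\Y$ all fail to involve both species, which is immediate. The one point worth stating carefully is that ``quadratic'' refers to the molecularity of the \emph{reactant} complexes, which is exactly what \Cref{lem:nonnegative_off_diagonal} constrains; the product complexes play no role. So the proof is a single short paragraph invoking the previous lemma after listing the admissible reactant complexes.
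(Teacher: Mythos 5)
Your proof is correct and matches the paper's approach exactly: the paper presents \Cref{lem:mixed_source} as an immediate consequence of \Cref{lem:nonnegative_off_diagonal} for quadratic networks, and your contrapositive argument with the enumeration of bimolecular reactant complexes is precisely the (omitted) routine verification.
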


By the next lemma, the presence of the reaction $2\X \to 3\X$ or $2\Y \to 3\Y$ is necessary for an Andronov--Hopf bifurcation to occur in the mass-action networks of our interest.

\begin{lemma}\label{lem:2Xto3X}
A planar, quadratic, trimolecular mass-action network admitting an isolated periodic orbit or a positive equilibrium with a pair of complex conjugate eigenvalues that cross the imaginary axis must include one of the reactions $2\X \to 3\X$ and $2\Y \to 3\Y$.
%A planar, quadratic, trimolecular mass-action network without the reactions $2\X \to 3\X$  and $2\Y \to 3\Y$ does not admit a positive equilibrium with eigenvalues with a positive real part. 
\end{lemma}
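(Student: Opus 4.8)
The plan is to prove the contrapositive: if a planar, \emph{quadratic}, \emph{trimolecular} mass-action network contains neither $2\X\to3\X$ nor $2\Y\to3\Y$, then for every choice of rate constants the induced ODE on $\mathbb{R}^2_{\geq0}$ admits no isolated periodic orbit and satisfies $\tr J\leq0$ at every positive equilibrium. The latter precludes a complex-conjugate pair of eigenvalues from crossing the imaginary axis, since such a pair has real part $\tfrac12\tr J$.

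First I would put the vector field in normal form. Since the network is quadratic, every reactant complex lies in $\{\0,\X,\Y,2\X,2\Y,\X+\Y\}$, so $\dot x=f(x,\kappa)$ has the shape $\dot x_1=a_0+a_1x_1+a_2x_2+a_{11}x_1^2+a_{12}x_1x_2+a_{22}x_2^2$ and $\dot x_2=b_0+b_1x_1+b_2x_2+b_{11}x_1^2+b_{12}x_1x_2+b_{22}x_2^2$, where each coefficient is a sum --- weighted by positive rate constants --- of entries of $\Gamma$ over the reactions with the relevant reactant complex. Then I would read off sign information. The coefficient of any monomial of $\dot x_1$ not involving $x_1$ comes from reactions with reactant $\0$, $\Y$ or $2\Y$, where the net change of $\X$ equals its stoichiometry in the product complex, hence $a_0,a_2,a_{22}\geq0$; symmetrically $b_0,b_1,b_{11}\geq0$. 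For $a_{11}$: it equals $\sum_j\kappa_j\big((\Gamma_r)_{1,j}-2\big)$ summed over reactions $j$ with reactant complex $2\X$; by trimolecularity a product complex with $\X$-stoichiometry at least $3$ must be exactly $3\X$, and since $2\X\to3\X$ is absent every such product has $\X$-stoichiometry at most $2$, so $a_{11}\leq0$; symmetrically $b_{22}\leq0$, using the absence of $2\Y\to3\Y$.

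The core of the argument is a Dulac computation with $g(x)=1/(x_1x_2)$, which is smooth and positive on the simply connected set $\mathbb{R}^2_+$. A routine calculation gives
\[
x_1^2x_2^2\,\nabla\!\cdot(gf)=x_2\bigl(a_{11}x_1^2-a_0-a_2x_2-a_{22}x_2^2\bigr)+x_1\bigl(b_{22}x_2^2-b_0-b_1x_1-b_{11}x_1^2\bigr),
\]
the contributions of $a_1,a_{12},b_2,b_{12}$ cancelling. By the sign constraints above, every monomial on the right is $\leq0$ on $\mathbb{R}^2_+$, so $\nabla\!\cdot(gf)\leq0$ there. Recalling that a periodic orbit of a mass-action system lies in the open positive quadrant, Green's theorem applied to the region it bounds gives a contradiction unless $\nabla\!\cdot(gf)$ vanishes on that region; being a rational function it then vanishes identically, which forces $a_0=a_2=a_{22}=a_{11}=b_0=b_1=b_{11}=b_{22}=0$, so that $f$ reduces to the Lotka--Volterra field $\dot x_1=x_1(a_1+a_{12}x_2)$, $\dot x_2=x_2(b_2+b_{12}x_1)$, whose periodic orbits --- if any --- form a continuum and are never isolated. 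Hence no isolated periodic orbit exists. Finally, at a positive equilibrium $f=0$, so the numerators above yield $J_{1,1}=\partial_{x_1}\dot x_1=(a_{11}x_1^2-a_0-a_2x_2-a_{22}x_2^2)/x_1\leq0$ and likewise $J_{2,2}\leq0$, whence $\tr J\leq0$ at every positive equilibrium for every $\kappa$.

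I do not anticipate a genuine obstacle: the hypotheses enter exactly where expected --- "quadratic" restricts the admissible monomials to the six listed, "trimolecular" rules out product complexes such as $4\X$ that would otherwise spoil the sign of $a_{11}$, and the absence of $2\X\to3\X$ and $2\Y\to3\Y$ is precisely what delivers $a_{11}\leq0$ and $b_{22}\leq0$. The only mildly delicate points are the degenerate case $\nabla\!\cdot(gf)\equiv0$ in the Dulac step, which as indicated collapses to the classical Lotka--Volterra dichotomy and so still forbids an isolated periodic orbit, and the standard fact (which I would cite) that periodic orbits of mass-action systems avoid the boundary of the nonnegative orthant, so that the Dulac estimate on $\mathbb{R}^2_+$ suffices.
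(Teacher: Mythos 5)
Your proof is correct and follows essentially the same route as the paper: the paper's own (very terse) argument also multiplies by the Dulac function $1/(xy)$ and observes that, absent $2\X\to3\X$ and $2\Y\to3\Y$, trimolecularity forces the resulting divergence to be nonpositive on the positive quadrant, which simultaneously excludes isolated periodic orbits and (since the divergence at an equilibrium reduces to $\tr J/(xy)$) forces $\tr J\leq 0$, so a complex pair can never cross the imaginary axis. Your write-up merely makes explicit the sign bookkeeping, the degenerate Lotka--Volterra case, and the boundary issue that the paper leaves implicit.
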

\begin{proof}
As in the proof of \cite[Lemma 3]{banaji:boros:hofbauer:2024a}, after multiplication by the Dulac function $\frac{1}{xy}$, it is clear that the only way to have positive divergence is if at least one of the reactions $2\X \to 3\X$ and $2\Y \to 3\Y$ is present.
\end{proof}

We conclude this section by recalling \cite[Lemma 2]{banaji:boros:hofbauer:2024a}.

\begin{lemma}\label{lem:nitecki}
A planar mass-action network whose reactant complexes lie on a line admits no periodic orbit.
\end{lemma}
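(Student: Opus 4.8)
The plan is to exploit the collinearity of the reactant complexes to show that the \emph{direction} of the mass-action vector field depends on the state only through a single monomial, and then to derive a contradiction from the elementary fact that the velocity field winds exactly once around any periodic orbit.

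First I would set up coordinates. Write $A_1,\dots,A_m$ for the rows of $A=\Gamma_\ell^{\mathrm t}$, viewed as points of $\mathbb{R}^2$; by hypothesis they lie on a common line. If they all coincide, then the vector field has the form $x^{A_1}\,c$ for a fixed $c\in\mathbb{R}^2$ and the claim is immediate, so assume there are at least two distinct reactant complexes. Then $A_j-A_1=\theta_j\delta$ for every $j$, where $\delta\in\mathbb{R}^2$ is a fixed nonzero vector and $\theta_j\in\mathbb{R}$; consequently, putting $z:=x^{\delta}$ (a smooth positive function on $\mathbb{R}^2_+$), every monomial factors as $x^{A_j}=x^{A_1}z^{\theta_j}$, and the ODE $\dot x=\Gamma(\kappa\circ x^A)$ takes the form
\[
\dot x = x^{A_1}\,P(z), \qquad \dot y = x^{A_1}\,Q(z),
\]
where $P$ and $Q$ are fixed real functions of the single variable $z$ (namely $P(z)=\sum_j\Gamma_{1,j}\kappa_j z^{\theta_j}$ and $Q(z)=\sum_j\Gamma_{2,j}\kappa_j z^{\theta_j}$). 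In particular, at every point of $\mathbb{R}^2_+$ the velocity is a \emph{positive} scalar multiple of the vector $v(z):=(P(z),Q(z))^{\mathrm t}$, and the positive equilibria are exactly the points where $v(z)=0$.

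Next I would argue by contradiction. Suppose $\Gamma$ is a (nonconstant) periodic orbit of period $T$, parameterised by $t\mapsto(x(t),y(t))$, and set $\zeta(t):=z(x(t),y(t))$. Since $\Gamma$ is nonconstant it contains no equilibrium, so $v(\zeta(t))\neq 0$ for all $t$; as $\zeta$ is continuous and $T$-periodic, its image $I\subset\mathbb{R}_+$ is a compact interval on which $v$ is nowhere zero, and I may fix a continuous branch $\Theta\colon I\to\mathbb{R}$ of the argument of $v$. Because the velocity $(\dot x(t),\dot y(t))$ is a positive multiple of $v(\zeta(t))$, the continuous real functions $t\mapsto\Theta(\zeta(t))$ and $t\mapsto\arg(\dot x(t),\dot y(t))$ (the latter a continuous lift of the velocity angle, well defined since $\Gamma$ carries no equilibrium) agree modulo $2\pi$ on $[0,T]$; as their difference is continuous and $2\pi\mathbb{Z}$-valued, it is constant, so the two functions have the same net increment over one period. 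Now I would invoke the classical fact that the velocity field of a planar ODE has index $+1$ along a periodic orbit, i.e.\ $\arg(\dot x,\dot y)$ increases by $2\pi$ over $[0,T]$. On the other hand $\Theta(\zeta(t))$ increases by $\Theta(\zeta(T))-\Theta(\zeta(0))=0$, since $\zeta(T)=\zeta(0)$ and $\Theta$ is a genuine function on $I$. This contradiction shows that no periodic orbit exists.

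The conceptual heart of the argument is the first reduction: collinearity of the reactant complexes makes the velocity direction a function of the single scalar $z=x^\delta$, which is rigid enough to be incompatible with the index constraint. The remaining points need only routine care — checking that $\Gamma$ meets no equilibrium (so that the velocity angle and $\Theta$ are defined along it), and matching the velocity angle with $\Theta\circ\zeta$ as $\mathbb{R}$-valued functions rather than merely mod $2\pi$. I expect no genuine obstacle; note in particular that the degenerate subcases in which the line passes through the origin or is parallel to a coordinate axis require no separate treatment, since the argument uses only $\delta\neq 0$.
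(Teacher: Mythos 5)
Your proof is correct. The reduction is exactly right: collinearity of the rows of $A$ gives $x^{A_j}=x^{A_1}(x^{\delta})^{\theta_j}$, so on $\mathbb{R}^2_+$ the velocity is a positive multiple of a vector $v(z)$ depending only on $z=x^{\delta}$; along a periodic orbit $z$ returns to its initial value, so the continuous lift $\Theta\circ\zeta$ of the velocity angle has zero net increment, contradicting the classical fact that the velocity field has index $\pm1$ (hence net angular increment $\pm2\pi$, nonzero) along any of its closed orbits. All the delicate points are handled: $v$ is nonvanishing on the image interval $I$ because every point of $I$ is hit by the orbit, and the two angle lifts differ by a constant element of $2\pi\mathbb{Z}$. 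The only (immaterial) imprecision is asserting the increment is $+2\pi$ rather than $\pm2\pi$.

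Regarding comparison: the paper itself supplies no argument here — it simply recalls Lemma~2 of \cite{banaji:boros:hofbauer:2024a} — so your write-up is a genuine self-contained proof where the paper has only a citation. The winding-number argument you give is the standard way to prove that a planar vector field whose \emph{direction} depends on a single scalar function of the state admits no closed orbits, and is very likely the substance of the cited lemma; an alternative route would be to observe that the level sets of $x^{\delta}$ foliate the positive quadrant and argue via tangency at the extrema of $z$ on the orbit, but your index argument is cleaner and requires no case analysis on the position of the line.
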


\section{Fold bifurcation}
\label{sec:fold}

In this section, we discuss \emph{fold} (sometimes termed \emph{saddle-node}) bifurcations of positive equilibria in small networks. Though this codimension-one bifurcation happens on a one-dimensional center manifold, reasonable assumptions on the molecularity limit its occurrence in rank-one networks. In \Cref{subsec:fold_rank-one}, we characterise quadratic, rank-one networks that admit a fold bifurcation. In particular, we find that bimolecular, rank-one networks do not admit a fold bifurcation. In \Cref{subsec:fold_planar}, we start by identifying all quadratic, trimolecular $(2,4,2)$ networks that admit a fold bifurcation. We list all networks in this class that are bimolecular. In fact, these networks are the \emph{smallest} planar, bimolecular networks that admit a fold bifurcation, meaning that any planar, bimolecular mass-action network admitting fold bifurcation must have at least $4$ reactions, and rank at least two. Finally, we highlight those bimolecular $(2,4,2)$ networks that admit multiple nonnegative asymptotically stable equilibria, and remark on an erroneous claim about such networks in previously published work.

%In fact, these networks are the \emph{smallest} bimolecular networks that admit a fold bifurcation, \textcolor{red}{meaning that any bimolecular $(n,m,r)$ mass-action network admitting fold bifurcation, and with $n \leq 2$ and $m \leq 4$, must be a $(2,4,2)$ network}.

%Hence, for both bimolecular networks (\Cref{subsec:fold_bimolecular}) and quadratic networks (\Cref{subsec:fold_quadratic}) we also investigate the two-species, rank-two case.

\subsection{Rank-one networks}
\label{subsec:fold_rank-one}

It is straightforward to verify that the quadratic, trimolecular $(1,3,1)$ network $\0 \rightleftarrows \X$, $2\X \to 3\X$ admits two positive nondegenerate equilibria that are born via a fold bifurcation. In fact, remarkably, this network, or another one that is simply equivalent to it, appears as an induced subnetwork in {\em every} quadratic, rank-one mass-action network that admits multiple positive nondegenerate equilibria. We thus have the following entirely {\em combinatorial} characterisation of quadratic, rank-one mass-action networks admitting multiple positive nondegenerate equilibria.

\begin{theorem}\label{thm:quadratic_rank-one_fold}
For a quadratic, rank-one mass-action network the following are equivalent.
\begin{enumerate}[(i)]
\item It admits multiple positive nondegenerate equilibria.
\item It admits a nondegenerate fold bifurcation of a positive equilibrium.
\item It includes one of the (simply equivalent) networks $\0 \to a\X$, $\X \to \0$, $2\X \to b\X$ ($a\geq1$, $b\geq3$) as an induced subnetwork.
\end{enumerate}
\end{theorem}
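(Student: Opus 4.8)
The plan is to prove the cycle of implications (iii) $\Rightarrow$ (i) $\Rightarrow$ (ii) $\Rightarrow$ (iii), though the genuinely combinatorial work is entirely in establishing (i) $\Rightarrow$ (iii); the other two arrows are comparatively routine. For (iii) $\Rightarrow$ (i), one checks directly that the three-reaction network $\0 \to a\X$, $\X \to \0$, $2\X \to b\X$ (with $a\ge 1$, $b\ge3$) admits two positive nondegenerate equilibria: the associated scalar ODE is $\dot x = c_1 a - c_2 x + c_3(b-2)x^2$ for positive constants $c_i$, a genuine quadratic in $x$ with positive leading coefficient (since $b\ge 3$) and positive constant term (since $a\ge 1$), so for suitable rate constants it has two distinct positive roots, each nondegenerate since the derivative of a quadratic vanishes at only one point. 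Then \Cref{lem:inherit} (more precisely \Cref{rem:induced}, since we stay within rank-one networks and the enlargements needed to pass from the subnetwork to the ambient network are of type E1 and E3) transfers the existence of multiple positive nondegenerate equilibria to any quadratic, rank-one network containing such a subnetwork. The implication (i) $\Rightarrow$ (ii), and also (iii)-for-the-subnetwork $\Rightarrow$ its fold, would follow by noting that on a one-dimensional center manifold two nondegenerate equilibria of opposite stability must, as rate constants are varied to merge them, pass through a fold; one makes this precise using the parameterisation of \Cref{sec:coords} and a standard transversality check that the rate constants unfold the fold (the vertical-tangency condition and the nondegeneracy of the quadratic term), invoking \Cref{lem:inherit} again for the inheritance direction. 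Finally (ii) $\Rightarrow$ (i) is immediate since a nondegenerate fold produces two nearby nondegenerate equilibria.

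The heart of the argument, and the main obstacle, is (i) $\Rightarrow$ (iii): showing that \emph{every} quadratic, rank-one network with multiple positive nondegenerate equilibria must contain one of the listed subnetworks. Here I would exploit rank one heavily. A rank-one network has stoichiometric matrix $\Gamma = w\,q^{\mathrm t}$ for a fixed nonzero $w\in\mathbb{R}^n$ and a row vector $q\in\mathbb{R}^m$ of scalars, so the dynamics is effectively governed by the scalar function $\phi(x) := q\,(\kappa\circ x^A) = \sum_j q_j\kappa_j x^{A_j}$ along the line through the stoichiometric class; positive equilibria are positive zeros of $\phi$ restricted to the (one-dimensional, up to affine reparameterisation) positive stoichiometric class, and nondegeneracy means the restricted derivative is nonzero. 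Parameterising the line as $x(s) = x^* + s\,w$ and writing $\psi(s) := \phi(x(s))$, multiple positive nondegenerate equilibria forces $\psi$ to have at least two sign changes on an interval, hence $\psi$ must be genuinely nonmonotone — and since each term $x^{A_j}$ is (by quadraticity) a monomial of degree at most two in the entries of $x$, $\psi(s)$ is a polynomial in $s$ of degree at most two. So $\psi$ is a nonconstant quadratic in $s$, which pins down a great deal: there must be monomials contributing a positive $s^2$ coefficient, a strictly negative $s^1$-type behaviour, and a positive constant-type behaviour, on the relevant interval.

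From this structural constraint I would extract, by a short but careful case analysis on which reactant complexes $A_j$ can occur (each of molecularity $\le 2$, each contributing a term whose behaviour along the line is dictated by its degree in $s$), that the network must contain reactant complexes supplying: a net production term quadratic in the direction $w$ (this will force a reaction like $2\X \to b\X$ with $b\ge 3$, after identifying $\X$ appropriately with the relevant coordinate and using that $b\ge 3$ is exactly the condition $b-2\ge 1 > 0$ needed for a positive quadratic coefficient), a net linear degradation term (forcing $\X \to \0$, up to simple equivalence — here one uses that rank one collapses several superficially different reactions to the same reaction vector), and a net constant inflow term (forcing $\0 \to a\X$ with $a\ge 1$). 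Simple equivalence (scaling reaction vectors by positive constants, allowed by \cite[Theorem A.3]{banaji:boros:2023}) and the freedom to choose the species labeling and the direction $w$ up to sign are what let us normalise the three reactions into exactly the stated forms. The delicate point to get right is that these three required "ingredients" are supplied by three \emph{distinct} reactions forming an induced subnetwork of the required type — i.e. ruling out degenerate ways in which a single reactant complex could supply more than one ingredient, or in which the apparent subnetwork fails to be induced because deleting species merges complexes; this bookkeeping, rather than any deep idea, is where the real care is needed.
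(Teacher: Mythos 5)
Your easy implications are fine and essentially the paper's: the base network $\0\to a\X$, $\X\to\0$, $2\X\to b\X$ reduces to the scalar quadratic $\dot x = a\kappa_1-\kappa_2x+(b-2)\kappa_3x^2$, which for suitable rate constants has two nondegenerate positive roots born at a transversally unfolded fold, and \Cref{lem:inherit} together with \Cref{rem:induced} transfers this to any quadratic, rank-one network containing it as an induced subnetwork; (ii) $\Rightarrow$ (i) is trivial. The reduction of (i) $\Rightarrow$ (iii) to a scalar quadratic $\psi(s)$ along the one-dimensional stoichiometric class is also the paper's starting point (\Cref{lem:quadratic_rank-one_fold}). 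But your extraction of the three required reactions from the sign pattern of the coefficients of $\psi$ has a genuine gap, in two places.

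First, you never invoke forward invariance of the nonnegative orthant, and without it the sign pattern $(+,-,+)$ you assert is simply not forced. If the generating vector $w$ of the stoichiometric subspace has entries of mixed signs, the positive stoichiometric class is a bounded interval $(0,d)$, and a quadratic can perfectly well have two simple roots in $(0,d)$ with \emph{negative} leading coefficient; no reaction of the form $2\X\to b\X$ is implied. The paper disposes of this case entirely differently: forward invariance forces $\psi(0)\geq 0$ and $\psi(d)\leq 0$, which is incompatible with a quadratic having two distinct roots in the open interval, so the mixed-sign case admits no multiple positive nondegenerate equilibria at all. Relatedly, you never bound the number of species: in the remaining case ($w$ componentwise positive) the paper observes that dynamical nontriviality forces a reaction whose reactant complex consumes every species, and bimolecularity of reactant complexes then caps the number of species at two --- this is what makes the subsequent case analysis finite. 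Second, your dictionary ``negative linear coefficient $\Rightarrow$ $\X\to\0$'' is wrong in the two-species case: there the consuming reaction is necessarily $\X+\Y\to\0$ (the only bimolecular reactant complex consuming both species), whose monomial $xy$ supplies the entire negative linear coefficient of $\psi$ through its cross term while simultaneously contributing negatively to the leading coefficient; the leading coefficient of $\psi$ is then a \emph{net} of competing contributions from $2\X\to(2+b)\X+b\Y$ and $\X+\Y\to\0$, not a single ``production'' term, and the listed subnetwork only emerges after deleting the species $\Y$. You flag the bookkeeping as delicate, but the issue is not care --- the proposed one-to-one correspondence between coefficient signs and individual reactions fails, and the correct argument (the paper's Case 2) instead enumerates which reactant complexes are combinatorially admissible and checks each.
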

\begin{proof}
The implication (iii) $\Longrightarrow$ (ii) holds by the results on the inheritance of bifurcations, see \Cref{subsec:inheritance}, and \Cref{rem:induced} in particular. The implication (ii) $\Longrightarrow$ (i) is trivial. Finally, (i) $\Longrightarrow$ (iii) follows from \Cref{lem:quadratic_rank-one_fold} below.
\end{proof}

The following corollary is an immediate consequence of \Cref{thm:quadratic_rank-one_fold} and also follows from \cite[Theorem 4.1]{pantea:voitiuk:2022} by Pantea and Voitiuk.

\begin{corollary}
A bimolecular, rank-one mass-action network admits at most one positive nondegenerate equilibrium, and hence, does not admit a nondegenerate fold bifurcation.
\end{corollary}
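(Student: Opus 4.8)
The plan is to obtain the corollary as an immediate consequence of \Cref{thm:quadratic_rank-one_fold}, since a bimolecular network is in particular quadratic, and here it is also rank one, so the equivalence of conditions (i)--(iii) in that theorem applies verbatim. It therefore suffices to rule out condition~(iii) for any bimolecular, rank-one network.

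First I would note that each network in the family appearing in (iii) --- namely $\0 \to a\X$, $\X \to \0$, $2\X \to b\X$ with $a \geq 1$ and $b \geq 3$ --- contains the reaction $2\X \to b\X$, whose product complex $b\X$ has molecularity $b \geq 3$; so none of these networks is bimolecular. Next I would observe that passing to an induced subnetwork cannot raise the molecularity of any complex: it only deletes reactions and deletes species (the latter removing a species from every complex in which it appears, and possibly collapsing reactions with coinciding reactant and product complexes). Hence an induced subnetwork of a bimolecular network is again bimolecular. Consequently a bimolecular network cannot contain any of the networks listed in (iii) as an induced subnetwork, i.e.\ (iii) fails.

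By the equivalence in \Cref{thm:quadratic_rank-one_fold}, the failure of (iii) forces the failure of (i) and (ii): the network admits at most one positive nondegenerate equilibrium and admits no nondegenerate fold bifurcation of a positive equilibrium, which is precisely the statement. I do not expect any real obstacle; the only step needing even a word of justification is the stability of bimolecularity under the formation of induced subnetworks, and this is immediate from the definition. As remarked in the surrounding text, one could alternatively invoke \cite[Theorem 4.1]{pantea:voitiuk:2022}, which directly bounds by one the number of positive nondegenerate equilibria of a bimolecular, rank-one mass-action network.
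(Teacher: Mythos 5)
Your proposal is correct and matches the paper's intended argument: the paper presents this corollary as an immediate consequence of \Cref{thm:quadratic_rank-one_fold}, and your write-up simply makes explicit the (easy) observation that condition~(iii) fails for bimolecular networks because induced subnetworks of bimolecular networks are bimolecular while $2\X\to b\X$ with $b\geq 3$ is not. Nothing further is needed.
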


%\begin{lemma}
%A quadratic, rank-one mass-action network with no trivial species admits multiple positive nondegenerate equilibria if and only if it has at most two species and, up to renaming the species, it includes three reactions that form a network, which is simply equivalent to one of (1), (2a), (2b), (2c) below.
%\input{tabular/fold_rank-one_atom}
%\end{lemma}

The previous two results depend on the following lemma. 
\begin{lemma}\label{lem:quadratic_rank-one_fold}
A quadratic, rank-one mass-action network admits multiple positive nondegenerate equilibria if and only if it has at most two nontrivial species and, after removal of trivial species if any, it includes three reactions that form a network which is simply equivalent to one of (1), (2a), (2b) or (2c) below.
\begin{center}
\begin{tabular}{l*{3}{r@{\hspace{4pt}$\to$\hspace{4pt}}l}}
\arrayrulecolor{mygray}
(1)  &  $\0$ & $\X$     & $\X$    & $\0$ & $2\X$ & $3\X$ \\
\hline
\multicolumn{7}{c}{} \\[-10pt]
(2a) &  $\0$ & $\X+ \Y$ & $\X+\Y$ & $\0$ & $2\X$ & $3\X+\Y$ \\
\hline
\multicolumn{7}{c}{} \\[-10pt]
(2b) &  $\Y$ & $\X+2\Y$ & $\X+\Y$ & $\0$ & $2\X$ & $3\X+\Y$ \\
\hline
\multicolumn{7}{c}{} \\[-10pt]
(2c) & $2\Y$ & $\X+3\Y$ & $\X+\Y$ & $\0$ & $2\X$ & $3\X+\Y$
\end{tabular}
\end{center}
\end{lemma}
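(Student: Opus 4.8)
The structure will be a forward ("only if") direction establishing structural constraints, followed by the easy backward ("if") direction. For the backward direction, I would note that networks (1), (2a), (2b), (2c) and any network simply equivalent to one of them (possibly with trivial species adjoined, which do not affect the $x$-dynamics) all contain, as an induced subnetwork of the same rank, a copy of $\0 \to a\X$, $\X \to \0$, $2\X \to b\X$ for appropriate $a \geq 1$, $b \geq 3$; inheritance from induced subnetworks of the same rank (\Cref{rem:induced}) together with the direct verification that such one-species networks admit two positive nondegenerate equilibria gives the claim. In fact, this direction will more likely be folded into \Cref{thm:quadratic_rank-one_fold} itself, so the lemma's proof may only need the "only if" direction plus the direct check for (1)--(2c).

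For the forward direction, the plan is to exploit the rank-one structure heavily. Since $\rank \Gamma = 1$, every reaction vector is a scalar multiple of a fixed vector $0 \neq w \in \mathbb{Z}^n$. First I would dispose of trivial species: rows of $\Gamma$ that are identically zero correspond to species not appearing in the support of $w$, and these contribute only constant factors (via mass-action monomials) to the remaining equations; after removing them the network has $n' = \#\mathrm{supp}(w) \leq n$ nontrivial species and rank one, so $n' \in \{1, 2\}$ will need to be shown — here $n' \geq 3$ would force, after the recoordinatisation of \Cref{sec:coords} or a direct argument, that the stoichiometric subspace is one-dimensional while $[A \mid \bm 1]$ has rank $\leq n'+1$; I would instead argue directly that with three or more nontrivial species the mass-action system on a stoichiometric line is governed by a single scalar ODE whose right-hand side is, after parametrisation of the line, a polynomial in one variable whose relevant coefficients (fixed by the quadratic/molecularity constraints) cannot produce two nondegenerate sign-changing roots unless the network already reduces to the $n' \leq 2$ case. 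Then, for $n' = 1$: a quadratic, one-species network has reactant complexes among $\{\0, \X, 2\X\}$, reaction vectors among $\{\pm \X\}$ (after the rank-one and molecularity constraints; trimolecularity is not yet assumed, but multiplicity of nondegenerate equilibria will force the relevant monomials to be $1, x, x^2$), and the scalar ODE $\dot x = p(x)$ with $p$ a combination of $1, x, x^2$; two positive nondegenerate equilibria force the $x^2$-coefficient to be nonzero (needing a reaction with reactant $2\X$ and a strictly larger product, i.e.\ $2\X \to b\X$, $b \geq 3$) and the constant coefficient to be nonzero with the right sign, which forces a source at $\0$ producing $\X$ (i.e.\ $\0 \to a\X$), and the linear term with the right sign, forcing $\X \to \0$; simple equivalence absorbs rate constants and the values of $a, b$. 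This is network (1). For $n' = 2$: the two species' equations are proportional, so again a single scalar ODE along the stoichiometric line $x = x^0 + t\, w$ governs the flow; parametrising by $t$, each mass-action monomial $x^{A_j}$ becomes a polynomial in $t$, and the quadratic bound forces each such polynomial to have degree $\leq 2$. I would then enumerate which reactant complexes can appear — using \Cref{lem:degen} (we need $\rank[A \mid \bm 1] = n' + 1 = 3$, hence at least three distinct reactant complexes, and they must be positioned so that $\bm 1 \notin \im A$) — and match against the requirement that the scalar polynomial $p(t)$ have two nondegenerate roots in the interior of the stoichiometric segment. The bookkeeping here yields precisely the three templates (2a), (2b), (2c), distinguished by which of $\0$, $\Y$, $2\Y$ is the "degradation-side" reactant complex sitting opposite $2\X$ along the common reaction direction $w = (1,0)^{\mathrm t}$ (up to relabelling).

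The main obstacle is the $n' = 2$ enumeration: showing that the only reactant-complex configurations compatible with (a) rank one with reaction vector a multiple of $w$, (b) all reactant complexes bimolecular, (c) $\rank[A \mid \bm 1] = 3$ so equilibria can be nondegenerate, and (d) the resulting scalar polynomial on the stoichiometric segment admitting two interior nondegenerate roots, are exactly the four-parameter families captured by (2a)--(2c) up to simple equivalence. I expect this to require a somewhat careful case split on the position of the stoichiometric line relative to the lattice of bimolecular complexes and on which complexes lie "upstream" versus "downstream" of the reaction direction — but each case is a finite linear-algebra check once set up. The ruling-out of $n' \geq 3$ should be comparatively short given \Cref{lem:degen} and the rank-one reduction to a scalar ODE. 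I would present the $n'=1$ case fully and explicitly, then organise the $n'=2$ case as a table of the admissible configurations with a one-line justification per row, deferring the most routine coordinate computations.
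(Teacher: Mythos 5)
Your backward direction (inheritance from the one-species atom via enlargements of the same rank) matches the paper and is fine, but the forward direction has a genuine gap: the organising principle you propose --- a case split on the number of nontrivial species, with the $n'\geq 3$ case dismissed by an unspecified analysis of polynomial coefficients --- is not the one that works, and the step you defer is exactly the step that needs an idea. The paper splits instead on the sign pattern of the generating vector $v$ of the one-dimensional stoichiometric subspace. If $v$ has entries of mixed sign, the positive stoichiometric classes are bounded segments; forward invariance of the closed segment $[0,d]$ forces the scalar quadratic $f$ governing the dynamics to satisfy $f(0)\geq 0$ and $f(d)\leq 0$, which is incompatible with two distinct roots of a quadratic in the open interval, so multiple positive nondegenerate equilibria are impossible for \emph{any} number of species in this case. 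If instead $v$ has all entries of one sign, dynamical nontriviality forces the presence of a reaction in which every species is net consumed; its reactant complex must then contain every species with positive stoichiometry, and bimolecularity of reactant complexes caps the species count at two. That single observation is what bounds $n'$ by $2$; your proposed alternative via \Cref{lem:degen} does not apply, since that lemma concerns $(n,m,n)$ networks whereas the network here has rank one, not rank $n'$.

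Your sketch of the two-species case also rests on a wrong picture: you place the reaction direction at $w=(1,0)^{\mathrm t}$ and speak of roots in the interior of a ``stoichiometric segment,'' but with $w=(1,0)^{\mathrm t}$ the species $\Y$ would be trivial, and in the only case where multiple equilibria can actually occur the generating vector is $(1,1)^{\mathrm t}$ (all entries positive), so the stoichiometric classes are unbounded rays, not segments. The correct enumeration is then short and structural rather than a large table: the all-species-consuming reaction with bimolecular reactant must be exactly $\X+\Y\to\0$; a short computation shows that if every other reactant complex is at most linear then two positive equilibria on a class are impossible, forcing a reaction $2\X\to(2+b)\X+b\Y$ or $2\Y\to b\X+(2+b)\Y$; and if only one of these is present, a reactant complex avoiding the corresponding species must also occur (otherwise a coordinate axis consists of equilibria and two positive equilibria on the ray are again impossible). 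These forced reactions are precisely what yield (1), (2a), (2b), (2c) up to simple equivalence. As written, your proposal defers exactly these steps, so it does not yet constitute a proof.
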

\begin{proof}
It is straightforward to verify that the network $\0 \to \X$, $\X \to \0$, $2\X \to 3\X$ admits multiple positive nondegenerate equilibria. Networks (2a), (2b), (2c) are obtained from (1) by adding a dependent species (enlargement E3), and hence, by \Cref{lem:inherit}, they also admit multiple positive nondegenerate equilibria. By inheritance, any rank-one network that is obtained by adding reactions and/or species to a network that is simply equivalent to one of (1), (2a), (2b), (2c) also admits multiple positive nondegenerate equilibria (see \Cref{rem:induced}).

Observe that the removal of trivial species does not affect the capacity of a network for multiple positive nondegenerate equilibria. Now fix a quadratic, rank-one mass-action network with no trivial species, and fix the rate constants and a positive stoichiometric class $\mathcal{P}$ with multiple positive nondegenerate equilibria. Let $v \in \mathbb{R}^n$ denote the generating vector of the stoichiometric subspace. Since the network has no trivial species, each entry of $v$ is nonzero.
\begin{description}
\item[Case 1] If the entries of $v$ have mixed signs then $\mathcal{P}$ is bounded. Eliminate all but one variable using the $n-1$ linear conservation relations and thereby obtain the quadratic, scalar differential equation $\dot{x}=f(x)$ on a compact interval $[0,d]$ (the projection of $\overline{\mathcal{P}}$ to the $x$-axis). Since $[0,d]$ is forward invariant, $f(0)\geq0$ and $f(d)\leq0$ hold. This contradicts the existence of two distinct roots of $f$ in the open interval $(0,d)$ (recall that $f$ is quadratic). 
\item[Case 2] If the entries of $v$ are all positive (or all negative) then the dynamical nontriviality of the network implies that there is a reaction in which each species is consumed. Hence, there are at most two species (recall that the network is quadratic). In the single-species case, it is straightforward to see that the existence of multiple positive nondegenerate equilibria is possible only if the network includes the reactions $\0 \to a\X$, $\X \to 0$, $2\X \to b\X$ for some  $a\geq1$ and $b\geq3$. In the two-species case, the network contains the reaction $\X+\Y\to\0$ (the only reaction with bimolecular reactant complex in which both species are consumed). A short calculation shows that if the reactant complexes of all the other reactions are at most linear then multiple positive equilibria are forbidden. Hence, $2\X\to(2+b)\X+b\Y$ or $2\Y\to b\X+(2+b)\Y$ for some $b\geq1$ is part of the network. If both, we have found a network that is simply equivalent to (2c). If only one of them, say $2\X\to(2+b)\X+b\Y$, then a reaction with reactant complex not involving the species $\X$ is present in the network (otherwise the line $x=0$ consists of equilibria, and hence, no multiple positive equilibria are possible on the positive part of the line $x=y+d$). Therefore, $\0 \to b\X+b\Y$ or $\Y \to b\X+(1+b)\Y$ for some $b\geq1$ is part of the network, implying that a network simply equivalent to (2a) or (2b) is included.
\end{description}
\end{proof}

%\begin{corollary}\label{cor:quadratic_rank-one_fold}
%A quadratic, rank-one mass-action network admits a fold bifurcation if and only if it includes one of the (simply equivalent) networks $\0 \to a\X$, $\X \to \0$, $2\X \to b\X$ ($a\geq1$, $b\geq3$) as an induced subnetwork.
%\end{corollary}

\subsection{Planar networks}
\label{subsec:fold_planar}

In \Cref{subsec:fold_rank-one} we have characterised nondegenerate fold bifurcations in quadratic, rank-one mass-action networks. In this section we study fold bifurcations in quadratic, trimolecular, planar, rank-two mass-action networks, with special attention on the bimolecular networks. By \Cref{thm:nplus2sources}, a planar, rank-two network can admit multiple positive nondegenerate equilibria only if it has at least four distinct reactant complexes, and hence, at least four reactions. Therefore, our focus will be on four-reaction networks. However, we remark that when allowing more than two species, a nondegenerate fold bifurcation occurs already in bimolecular, three-reaction networks, see \cite[Section 5.1]{banaji:boros:hofbauer:2024a}. The following lemma and theorem summarise our main findings regarding fold bifurcations in quadratic, trimolecular $(2,4,2)$ mass-action networks. The enumerations of the sets of networks appearing in the results to follow are performed in the Mathematica Notebook \cite{boros_github}.

\begin{lemma}\label{lem:quadratic_trimolecular_fold_base_set}
There are 5897 dynamically nonequivalent, dynamically nontrivial, quadratic, trimolecular $(2,4,2)$ mass-action networks with four distinct reactant complexes. Out of these 5897 networks, 5864 admit a positive nondegenerate equilibrium.
\end{lemma}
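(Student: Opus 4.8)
This statement is established by a finite enumeration, carried out in \cite{boros_github}; the plan behind it is as follows. First I would delimit the search space. A reactant complex of a planar, quadratic network is one of the six bimolecular complexes $\0,\X,\Y,2\X,\X+\Y,2\Y$, while a product complex of a trimolecular one is drawn from the ten trimolecular complexes (those six together with $3\X,2\X+\Y,\X+2\Y,3\Y$), and a reaction is an ordered pair of distinct such complexes whose reactant is bimolecular. A candidate with four distinct reactant complexes is obtained by choosing four distinct reactant complexes and, for each, a product complex different from it. Among these candidates I would keep exactly those of rank two (so that they are genuine $(2,4,2)$ networks rather than rank-one ones) that are moreover dynamically nontrivial, i.e.\ $\ker\Gamma\cap\mathbb{R}^4_{+}\neq\emptyset$, and then quotient by dynamical equivalence. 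Since the four reactant complexes are distinct, the monomials $x^{A_1},\dots,x^{A_4}$ are linearly independent functions, so two such networks yield the same family of mass-action ODEs --- possibly after swapping $\X\leftrightarrow\Y$ and reordering reactions --- precisely when they have the same set of reactant complexes and, on each reactant complex, the corresponding reaction vectors agree up to a positive scalar. Running this enumeration gives the count $5897$.

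Next I would record a structural fact that makes the earlier machinery apply uniformly to every network on the list. No four of the six bimolecular complexes are collinear --- the only lines through three of them are $y=0$, $x=0$ and $x+y=2$ --- so for each of our networks $\mathrm{rank}\,A=2$ and $\bm1\notin\mathrm{im}\,A$, whence $\mathrm{rank}[A\,|\,\bm1]=3=n+1$. Consequently \Cref{lem:degen} never rules out nondegenerate positive equilibria, and the hypotheses of \Cref{lem:biject} and of the recoordinatisation in \Cref{sec:coords} are satisfied.

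For the second count I would reduce nondegeneracy to a polynomial non-vanishing condition. For any $v\in\ker\Gamma\cap\mathbb{R}^4_{+}$ the point $x=\bm1$ is a positive equilibrium for the rate-constant vector $\kappa=v$, and by \eqref{eq:MAJac0} its Jacobian equals $\Gamma D_v A$; conversely, by \eqref{eq:MAJac} the Jacobian at \emph{any} positive equilibrium, for \emph{any} rate constants, equals $\lambda\,\Gamma D_{h(\alpha)}A D_{1/x}$ with $h(\alpha)\in\ker\Gamma\cap\mathbb{R}^4_{+}$, $\lambda>0$ and $D_{1/x}$ nonsingular, so it is nonsingular if and only if $\det(\Gamma D_{h(\alpha)}A)\neq0$. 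Hence the network admits a positive nondegenerate equilibrium for some choice of rate constants if and only if the homogeneous quadratic polynomial $v\mapsto\det(\Gamma D_v A)$ does not vanish identically on the two-dimensional subspace $\ker\Gamma$ --- a finite check, which $5864$ of the $5897$ networks pass.

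The mathematics here is light; the real obstacle is bookkeeping. One must implement the dynamical-equivalence identification correctly --- in particular, merging networks that differ only by replacing, say, a reaction $\0\to\X$ with $\0\to2\X$ or $\0\to3\X$ on the same reactant complex, since these give proportional reaction vectors and hence the same ODE family --- and one must trust the enumeration in \cite{boros_github}. The two reductions above (the collinearity observation, and the equivalence of nondegeneracy with non-vanishing of $\det(\Gamma D_v A)$ on $\ker\Gamma$) follow routinely from \Cref{lem:degen}, \Cref{lem:biject} and the construction of \Cref{sec:coords}.
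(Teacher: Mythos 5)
Your proposal is correct and takes essentially the same route as the paper, which establishes this lemma purely by the computer enumeration in \cite{boros_github}: you delimit the same candidate set, apply the same filters (rank two, dynamical nontriviality, dynamical equivalence up to swapping $\X\leftrightarrow\Y$, which for four distinct reactant complexes reduces exactly to matching reaction-vector rays on each source), and your reduction of the second count to the non-vanishing of $v\mapsto\det(\Gamma D_v A)$ on $\ker\Gamma$ is the correct criterion, justified by \eqref{eq:MAJac0}--\eqref{eq:MAJac}. The collinearity observation and the final numbers must, as you say, be delegated to the computation, which is precisely what the paper does.
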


\begin{theorem}\label{thm:834_quadratic_trimolecular_fold}
Out of the 5864 mass-action networks in \Cref{lem:quadratic_trimolecular_fold_base_set}, 834 admit a positive equilibrium with a zero eigenvalue. In 3 networks, the zero eigenvalue always has an algebraic multiplicity of two. Each of the remaining 831 networks admits a nondegenerate fold bifurcation. Out of these 831 networks, in 825 cases the zero eigenvalue can be accompanied by a negative eigenvalue, and in 39 cases the zero eigenvalue can be accompanied by a positive eigenvalue, with the intersection of these two sets consisting of 33 networks. 
\end{theorem}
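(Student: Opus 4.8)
The statement sharpens \Cref{lem:quadratic_trimolecular_fold_base_set}, so the plan is a computer-assisted case analysis over the $5864$ networks, built on the coordinate framework of \Cref{sec:coords}; the enumeration would be carried out in the Mathematica notebook \cite{boros_github}. Fix one such network. Since it is dynamically nontrivial and admits a positive nondegenerate equilibrium, \Cref{lem:degen} gives $\rank[A\,|\,\bm{1}] = 3$, so \Cref{lem:biject} applies and positive equilibria correspond bijectively to solutions $\alpha$ of the solvability equation $\kappa^W = h(\alpha)^W$ (a scalar equation, as $m-r-1 = 1$ here). I would write the Jacobian at a positive equilibrium as in \eqref{eq:MAJac}, $J = \lambda\,M(\alpha)\,D_{1/x}$ with $M(\alpha) := \Gamma D_{h(\alpha)} A$ a $2\times 2$ matrix whose entries are affine in $\alpha$, and observe that for every $\alpha$ in the parameter domain, every $x \in \mathbb{R}^2_+$ and every $\lambda > 0$, the rate constants $\kappa := \lambda\,h(\alpha)\circ x^{-A} \in \mathbb{R}^4_+$ make $x$ a positive equilibrium with exactly this linearisation. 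Thus the analysis of linearisations at positive equilibria reduces to the matrix $M(\alpha)$ together with the free positive diagonal rescaling $D_{1/x}$.

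First I would settle the count $834$. Since the network has rank $2 = n$, its stoichiometric subspace is all of $\mathbb{R}^2$, so an equilibrium is degenerate iff $\det J = 0$, and $\det J = \lambda^2(x_1x_2)^{-1}\det M(\alpha)$. Hence a positive equilibrium with a zero eigenvalue exists iff the polynomial $p(\alpha) := \det M(\alpha)$, which has degree at most two in $\alpha$, has a root $\alpha_0$ with $h(\alpha_0) \in \mathbb{R}^4_+$; checking this network by network isolates the $834$ (equivalently, confirms that the remaining $5030$ have only nondegenerate positive equilibria).

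Next, for the $834$, write $a_{ij}(\alpha)$ for the entries of $M(\alpha)$; at a root $\alpha_0$ of $p$ the accompanying eigenvalue of $J$ is $\tr J = \lambda(x_1^{-1}a_{11}(\alpha_0) + x_2^{-1}a_{22}(\alpha_0))$. For exactly $3$ of the networks one would check that every admissible root of $p$ also has $a_{11}(\alpha_0) = a_{22}(\alpha_0) = 0$; then $\det M(\alpha_0) = 0$ forces $a_{12}(\alpha_0)a_{21}(\alpha_0) = 0$, so $J$ has both eigenvalues zero --- the zero eigenvalue always has algebraic multiplicity two, and a fold is impossible. For each of the other $831$ there is an admissible root $\alpha_0$ of $p$ with $a_{11}(\alpha_0)\neq 0$ or $a_{22}(\alpha_0)\neq 0$, so a suitable $x$ produces a positive equilibrium with a \emph{simple} zero eigenvalue; I would then verify the standard transversality and quadratic-nondegeneracy conditions for a fold bifurcation transversely unfolded by the rate constants (\cite[Section 3.2]{kuznetsov:2023}), with the inner parameter of \Cref{sec:coords} supplying the unfolding direction and the quadratic coefficient on the one-dimensional centre manifold being nonzero --- a computational check, consistent with the impossibility of cusp bifurcation in $(2,4,2)$ networks (\Cref{thm:cusp}). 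Finally, since $x_1, x_2 > 0$ are free, a negative (resp.\ positive) accompanying eigenvalue is attainable iff $\min\{a_{11}(\alpha_0), a_{22}(\alpha_0)\} < 0$ (resp.\ $\max\{a_{11}(\alpha_0), a_{22}(\alpha_0)\} > 0$) for some admissible root $\alpha_0$ of $p$; polynomial sign-feasibility tests over the $831$ networks would yield $825$, $39$, and $33$ for the intersection.

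The hard part is not any individual step but the scale and reliability of the enumeration: generating all $5864$ networks in the coordinates of \Cref{sec:coords}, reducing them up to dynamical equivalence, and then running the (elementary but numerous) feasibility tests --- for $\det M(\alpha)=0$, for the signs of $a_{11}$ and $a_{22}$ at its admissible roots, and for the fold nondegeneracy conditions --- without missing edge cases, in particular correctly identifying the $3$ structurally degenerate networks whose zero-eigenvalue equilibria are never simple.
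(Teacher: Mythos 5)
Your proposal is correct and takes essentially the same approach as the paper: the paper offers no textual proof of this theorem, deferring entirely to the computer-assisted enumeration in the cited Mathematica notebook, and your outline --- parameterising equilibria and their Jacobians by $(\alpha,x,\lambda)$ via \eqref{eq:MAJac}, reducing the zero-eigenvalue, multiplicity-two, and second-eigenvalue-sign questions to root and sign conditions on $\det M(\alpha)$ and the diagonal entries $a_{11}(\alpha_0),a_{22}(\alpha_0)$, and verifying the fold nondegeneracy and transversality conditions computationally --- is a faithful and somewhat more explicit description of what that enumeration checks. The specific counts $834$, $3$, $831$, $825$, $39$, $33$ can of course only be certified by actually running the computation.
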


\begin{remark} \label{rem:fold_quadratic}
We have collected several comments on the networks appearing in \Cref{lem:quadratic_trimolecular_fold_base_set} and \Cref{thm:834_quadratic_trimolecular_fold}. 
\begin{enumerate}[(a)]
\item The 831 networks in \Cref{thm:834_quadratic_trimolecular_fold} that admit a fold bifurcation fall into 639 diagonally nonequivalent classes \cite{boros_github}.

\item \label{item:cusp} None of the 831 networks in \Cref{thm:834_quadratic_trimolecular_fold} that admit a fold bifurcation can admit a cusp bifurcation, for the reason that cusp bifurcation is forbidden in $(2,4,2)$ networks (see \Cref{thm:cusp} in \Cref{sec:other}).

\item The 33 networks in \Cref{thm:834_quadratic_trimolecular_fold} for which the second eigenvalue can change sign are listed in \Cref{tab:B-T}. In \Cref{sec:B-T}, we will see that most of these networks admit a nondegenerate Bogdanov--Takens bifurcation, with two of them admitting a particular degenerate form of this bifurcation which we will refer to as a {\em vertical} Bogdanov--Takens bifurcation.

\item\label{item:fold_quadratic_2d_center_manifold} The 3 networks in \Cref{thm:834_quadratic_trimolecular_fold} for which the zero eigenvalue cannot have an algebraic multiplicity of one are
%\begin{align}\label{eq:fold_2d_center_manifold}
\begin{center}
\begin{tabular}{*{4}{r@{\hspace{4pt}$\to$\hspace{4pt}}l}}
\arrayrulecolor{mygray}
$2\X$ & $3\X$ & $\X+\Y$ & $2\Y$ & $\Y$ & $\0$ & $2\Y$ & $\X+2\Y$ \\
\hline
\multicolumn{8}{c}{}\\[-10pt]
$2\X$ & $3\X$ & $\X+\Y$ & $3\Y$ & $\Y$ & $\0$ & $2\Y$ & $\X+2\Y$ \\
\hline
\multicolumn{8}{c}{}\\[-10pt]
$2\X$ & $3\X$ & $\X+\Y$ &  $\0$ & $\Y$ & $2\Y$ & $2\Y$ & $\X+2\Y$
\end{tabular}
\end{center}
%\end{align}
For these 3 networks (of which the first and the second are diagonally equivalent), similarly to a fold bifurcation, two positive equilibria are born; however, at the bifurcation point, the Jacobian matrix is nilpotent of index two. We may regard the resulting bifurcation as an incomplete unfolding of a double zero singularity. Notice that the first and the second networks are obtained from the \emph{generalised LVA} network \cite{farkas:noszticzius:1985,simon:1992,banaji:boros:hofbauer:2024a}
\begin{align} \label{eq:LVA}
2\X \to 3\X \qquad \X+\Y \to b\Y \qquad \Y \to \0 \quad \text{with $b=2$ or $3$}
\end{align}
via enlargement E1.
%\cite[Eq.\ (8)]{farkas:noszticzius:1985}, \cite[Eq.\ (1)]{simon:1992}, or \cite[Eq.\ (20)]{banaji:boros:hofbauer:2024a}

\item\label{item:fold_quadratic_E6} Out of the 831 networks in \Cref{thm:834_quadratic_trimolecular_fold}, in 15 cases the fold bifurcation is inherited from the special one-species networks $\0 \to a\X$, $\X \to \0$, $2\X \to 3\X$ (with $a=1,2,3$) appearing in \Cref{thm:quadratic_rank-one_fold} by applying enlargement E6. These 15 networks are:  
%\begin{align}\label{eq:fold_E6}
\begin{center}
\begin{tabular}{lll}
\arrayrulecolor{mygray}
$\0 \to C \to \X$ & $\X \to \0$ & $2\X \to 3\X$ \\
\hline
\multicolumn{3}{c}{} \\[-10pt]
$\0 \to C \to 2\X$ & $\X \to \0$ & $2\X \to 3\X$ \\
\hline
\multicolumn{3}{c}{} \\[-10pt]
$\0 \to C \to 3\X$ & $\X \to \0$ & $2\X \to 3\X$ \\
\hline
\multicolumn{3}{c}{} \\[-10pt]
$\0 \to \X$ & $\X \to C \to \0$ & $2\X \to 3\X$ \\
\hline
\multicolumn{3}{c}{} \\[-10pt]
$\0 \to \X$ & $\X \to \0$ & $2\X \to C \to 3\X$ \\
\end{tabular}
\end{center}
%\end{align}
where the inserted complex $C$ is any of $\Y$, $2\Y$, or $\X+\Y$. The remaining 816 networks are atoms of fold bifurcation (see \Cref{subsec:inheritance}).

\item\label{item:fold_quadratic_vertical} In all the $5897-5864=33$ networks in \Cref{lem:quadratic_trimolecular_fold_base_set} which do not admit a positive, nondegenerate equilibrium, a \emph{vertical fold} bifurcation occurs: at the critical parameter values, there is a curve of equilibria. For each of the 33 networks, this curve is a straight line (either through the origin or vertical or horizontal), see \cite{boros_github}.
%Sign of the second eigenvalue: in 27 cases negative, in 2 cases positive, in 4 cases it can change sign. This is probably not so interesting... \textcolor{red}{[I'm not sure whether we should refer to this occurrence as a vertical fold bifurcation, even though I think it's helpful for us...? Maybe we can leave out this remark?]}

\item\label{item:fold_quadratic_unique_equil} If we remove from the $5864$ networks in \Cref{lem:quadratic_trimolecular_fold_base_set} the $831$ networks in \Cref{thm:834_quadratic_trimolecular_fold} which admit a nondegenerate fold bifurcation, and the $3$ networks with the bifurcation described in \eqref{item:fold_quadratic_2d_center_manifold} above, we can confirm computationally that the remaining $5030$ networks admit at most one positive equilibrium \cite{boros_github}. %Hence, fold bifurcation, in quadratic, trimolecular networks is equivalent to admitting multiple positive nondegenerate equilibria (modulo the 3 degenerate networks discussed above). I have checked it in Mathematica. Is there some general result in the literature in this direction? Something about injectivity...} \textcolor{red}{[I think we can simply say that we are unable to rule out theoretically that quadratic networks might admit MPNE without admitting fold if we allow molecularity higher than 3.]}

\item\label{item:fold_quadratic_at_most_two_equil} 
We can confirm computationally that for all the 834 networks in \Cref{thm:834_quadratic_trimolecular_fold}, for any pair of positive equilibria, the Jacobian determinant is negative at one, and positive at the other \cite{boros_github}. Hence, these networks have at most two positive equilibria.

\item As a consequence of \eqref{item:fold_quadratic_unique_equil} and \eqref{item:fold_quadratic_at_most_two_equil}, no quadratic, trimolecular $(2,4,2)$ network admits three positive nondegenerate equilibria. This is not automatic from \eqref{item:cusp} above, as a nondegenerate cusp bifurcation is sufficient but not necessary for the existence of three positive nondegenerate equilibria, as discussed further in \Cref{sec:other}.

%We can confirm computationally that no quadratic, trimolecular $(2,4,2)$ network admits three positive nondegenerate equilibria \cite{boros_github}. This is not automatic from \eqref{item:cusp} above, as a nondegenerate cusp bifurcation is sufficient but not necessary for the existence of three positive nondegenerate equilibria, as discussed further in \Cref{sec:other}. Additionally, remark that whenever there are two positive equilibria, the Jacobian determinant is negative for one of them, while it is positive for the other one, and hence, both equilibria are nondegenerate.

%\purple{It would be nice to show that the $834$ networks admit at most two positive equilibria (and one of them is a saddle whenever there are two). I plan to check it in Mathematica, but this might be challenging due to the computational complexity (we have to rule out the coexistence of three positive equilibria). Maybe this point has low priority because even if it is true, it is just a consequence of the fact that we work with only four reactions. For five reactions we probably see networks with fold bifurcation and three positive equilibria (e.g. networks with cusp).} \textcolor{red}{[I already checked in mathematica that the hexamolecular examples are the smallest quadratic examples where there are three positive nondegenerate equilibria. I think we can remark that three positive nondegenerate equilibria are forbidden in quadratic, trimolecular $(2,4,2)$, and we will study multistability in future work.]}

\end{enumerate}
\end{remark}

%%%%%%%%%%%%%%%%%%%%%%%%%%%%%%%%%%%%%%%%%%%%%%%%%%

In the rest of this section, we discuss bimolecular $(2,4,2)$ networks that admit a fold bifurcation. We start by specialising \Cref{lem:quadratic_trimolecular_fold_base_set} and \Cref{thm:834_quadratic_trimolecular_fold} to the bimolecular case, and then we make some comments.

\begin{lemma}\label{lem:bimolecular_fold_base_set}
There are 838 dynamically nonequivalent, dynamically nontrivial, bimolecular $(2,4,2)$ mass-action networks with four distinct reactant complexes. Out of these 838 networks, 829 admit a positive nondegenerate equilibrium.
\end{lemma}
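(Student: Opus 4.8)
The statement is an enumeration result, so the plan is to reduce it to a finite, explicitly checkable search; the headcount itself is carried out by computer in \cite{boros_github}, but the reductions below are what turn that computation into a proof.

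\textbf{The search space.} On the two species $\X,\Y$ there are exactly six bimolecular complexes --- $\0$, $\X$, $\Y$, $2\X$, $2\Y$, $\X+\Y$ --- hence $6\cdot 5=30$ bimolecular reactions. Since, by \Cref{thm:nplus2sources}, a planar rank-two network needs at least four distinct reactant complexes even to have two positive nondegenerate equilibria, I restrict attention to networks with four distinct reactant complexes. Such a network is specified by a choice of four of the six complexes as reactants ($\binom{6}{4}=15$ ways) together with a product complex (one of the remaining five) for each, giving $15\cdot 5^4=9375$ candidate reaction sets. For each candidate I form the $2\times 4$ stoichiometric matrix $\Gamma$ and the reactant matrix $\Gamma_\ell$, set $A=\Gamma_\ell^{\mathrm t}$, and keep the candidate only if (a) $\rank\Gamma=2$, and (b) the network is dynamically nontrivial, i.e.\ $\ker_+\Gamma\neq\emptyset$; the latter is equivalent to the four columns of $\Gamma$ positively spanning $\mathbb{R}^2$, a finite linear check.

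\textbf{Quotienting by dynamical equivalence.} Two networks in this class give the same family of mass-action ODEs precisely when they carry the same multiset of pairs (reactant complex, ray spanned by the reaction vector), up to the relabelling $\X\leftrightarrow\Y$: since the four reactant monomials $x^{A_j}$ are linearly independent functions, the family $\{\sum_j t_j\Gamma_{\cdot j}x^{A_j}:t_j>0\}$ is, as a set, the product of the rays $\mathbb{R}_+\Gamma_{\cdot j}$ attached to the reactant complexes present, so it recovers exactly those pairs. Two bookkeeping points matter: on a fixed reactant complex two distinct products can give proportional reaction vectors (e.g.\ $\0\to\X$ and $\0\to2\X$, or $2\X\to\X$ and $2\X\to\0$), so the corresponding networks must be identified; and the species swap must be factored out. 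Carrying out this quotient on the candidates surviving (a) and (b) yields the figure $838$. As a consistency check, these are precisely the classes among the $5897$ of \Cref{lem:quadratic_trimolecular_fold_base_set} that possess a representative with all complexes bimolecular.

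\textbf{Counting those with a positive nondegenerate equilibrium.} No line in the plane contains four of the six bimolecular complexes, so for any four distinct reactant complexes $\rank[A\,|\,\bm{1}]=3$; hence the degeneracy obstruction of \Cref{lem:degen} never applies, and for the same reason (no line through the origin contains four of the six) $\rank A=2$. A positive equilibrium always exists for suitable rate constants: given $v\in\ker_+\Gamma$ and any $x^*\in\mathbb{R}^2_+$, the choice $\kappa=v\circ(x^*)^{-A}$ makes $x^*$ an equilibrium. So only nondegeneracy is at issue. By \eqref{eq:MAJac} the Jacobian at a positive equilibrium is $\lambda\,\Gamma D_h A D_{1/x}$ with $D_h,D_{1/x}$ positive diagonal, so nondegeneracy amounts to $\det(\Gamma D_h A)\neq 0$ for some $h\in\ker_+\Gamma$. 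By the Cauchy--Binet formula,
\[
\det(\Gamma D_h A)=\sum_{\{i,j\}}\det(\Gamma_{\{i,j\}})\det(A_{\{i,j\}})\,h_i h_j=:Q(h),
\]
where $\Gamma_{\{i,j\}}$ is the $2\times2$ submatrix of $\Gamma$ on columns $\{i,j\}$ and $A_{\{i,j\}}$ that of $A$ on rows $\{i,j\}$. Since $\ker_+\Gamma$ is a nonempty relatively open cone inside the two-dimensional space $\ker\Gamma$, such an $h$ exists if and only if the restriction $Q|_{\ker\Gamma}$ is not the zero quadratic form --- three scalar conditions on a basis of $\ker\Gamma$. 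Applying this test to each of the $838$ networks gives $829$ for which it holds; for the remaining $9$, $Q$ vanishes identically on $\ker\Gamma$, every positive equilibrium is degenerate, and in fact the equilibria form a curve (a ``vertical fold'', cf.\ \Cref{rem:fold_quadratic}).

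\textbf{The main obstacle.} The mathematics above is light: Cauchy--Binet plus the observation that four of six bimolecular complexes are never collinear. The only real source of error is the dynamical-equivalence quotient --- correctly identifying networks that coincide only after rescaling rate constants on a reactant complex carrying proportional reaction vectors, and after the $\X\leftrightarrow\Y$ swap, so that the $838$ classes are counted without duplication or omission. This is exactly why the enumeration is delegated to \cite{boros_github}.
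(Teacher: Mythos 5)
Your proposal is correct and takes essentially the same route as the paper, which likewise delegates this enumeration to the Mathematica notebook \cite{boros_github}; your reductions --- linear independence of the four reactant monomials to characterise dynamical equivalence on this class, the observation that no four of the six bimolecular complexes are collinear so that $\rank[A\,|\,\bm{1}]=3$ and \Cref{lem:degen} never obstructs, the free choice of $h\in\ker_+\Gamma$ and $x\in\mathbb{R}^2_+$ via $\kappa=h\circ x^{-A}$, and the Cauchy--Binet criterion $Q|_{\ker\Gamma}\not\equiv 0$ for nondegeneracy --- are exactly the finite checks that computation must perform. No gaps.
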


\begin{theorem}\label{thm:30_bimolecular_fold}
Out of the 829 networks in \Cref{lem:bimolecular_fold_base_set}, 30 admit a positive equilibrium with a zero eigenvalue, see \Cref{tab:fold_bimolecular}. In each of the 30 cases, the other eigenvalue is negative, and the equilibrium undergoes a nondegenerate fold bifurcation.
\end{theorem}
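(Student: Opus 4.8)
The plan is to treat this as a finite verification over the $829$ networks of \Cref{lem:bimolecular_fold_base_set}, leaning on \Cref{thm:834_quadratic_trimolecular_fold} and a structural fact about the trace of the Jacobian in bimolecular planar networks, with the enumeration itself deferred to \cite{boros_github}.

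First I would observe that bimolecular networks are in particular trimolecular, so the $829$ networks of \Cref{lem:bimolecular_fold_base_set} form a subset of the $5864$ networks of \Cref{lem:quadratic_trimolecular_fold_base_set}, and \Cref{thm:834_quadratic_trimolecular_fold} applies to them. The three networks in \Cref{rem:fold_quadratic}\,\eqref{item:fold_quadratic_2d_center_manifold}, for which any positive equilibrium with a zero eigenvalue has nilpotent Jacobian, each contain the trimolecular reaction $2\X\to3\X$ and so are not bimolecular. Hence any of the $829$ bimolecular networks that admits a positive equilibrium with a zero eigenvalue is one of the $831$ networks of \Cref{thm:834_quadratic_trimolecular_fold}, and therefore admits a nondegenerate fold bifurcation with non-nilpotent Jacobian at the bifurcation point.

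It then remains to pin down the count and the sign of the second eigenvalue. For the count, I would check, network by network, whether a positive equilibrium with a zero eigenvalue can occur. Each of the $829$ networks has $\mathrm{rank}\,[A\,|\,\bm{1}]=3$ by \Cref{lem:degen}, so by \Cref{lem:biject}, \eqref{eq:MAJac}, and the recoordinatisation of \Cref{sec:coords}, its positive equilibria are parameterised by a single scalar $\alpha$ in an open interval, with $\det J = \lambda^{2}\det(\Gamma D_{h(\alpha)}A)/(x_{1}x_{2})$; the Cauchy--Binet formula makes $\det(\Gamma D_{h(\alpha)}A)$ a polynomial $p(\alpha)$ of degree at most two, not identically zero since the network admits a nondegenerate equilibrium. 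A positive equilibrium with a zero eigenvalue then exists for some rate constants exactly when $p$ has a root in the interior of this interval, which is decidable for each network; I expect this search to return precisely the $30$ networks of \Cref{tab:fold_bimolecular} and nothing for the other $799$. For the sign, I would invoke the Dulac argument behind \cite{pota:1983}: for every planar, bimolecular mass-action network, multiplying the vector field by $1/(xy)$ produces a function whose divergence on $\mathbb{R}^{2}_{+}$ is $\le 0$, checked term by term over the finitely many bimolecular reactant and product complexes. At a positive equilibrium the vector field vanishes, so that divergence equals $\tfrac{1}{xy}\tr J$, giving $\tr J\le0$ at every positive equilibrium of such a network. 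At each of the $30$ fold points $\det J=0$, and by the previous paragraph the Jacobian is not nilpotent, so $\tr J\neq0$; hence $\tr J<0$, and as $\det J=0$ the second eigenvalue equals $\tr J$ and is negative.

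The main obstacle is the completeness of the computational search rather than any single conceptual step: one has to verify for all $829$ networks which admit the root of $p$ in the correct open interval --- in particular not mistaking a root at an endpoint, where a coordinate of $h$ vanishes and no positive equilibrium results, for an admissible one --- and to organise the enumeration so that dynamically equivalent networks are identified, matching the counts in \Cref{lem:bimolecular_fold_base_set} and \Cref{thm:834_quadratic_trimolecular_fold}.
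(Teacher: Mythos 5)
Your proposal follows essentially the same route as the paper: the identification of the $30$ networks is a finite computational enumeration (the paper defers this entirely to the Mathematica notebook \cite{boros_github}), and the negativity of the second eigenvalue comes from a divergence argument with the Dulac function $1/(xy)$ --- the paper obtains $\tr J<0$ at positive equilibria of bimolecular planar networks by citing the proof of \cite[Theorem 4.1]{boros:hofbauer:2022}, whereas you re-derive the inequality $\tr J\leq 0$ term by term, which is a nice self-contained version of the same fact. Your reduction of the nondegeneracy of the fold to \Cref{thm:834_quadratic_trimolecular_fold}, after checking that the three exceptional networks of \Cref{rem:fold_quadratic}~\eqref{item:fold_quadratic_2d_center_manifold} all contain $2\X\to3\X$ and so are not bimolecular, also matches the paper's logic.

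One step is not justified as written: the passage from $\tr J\leq0$ to $\tr J<0$. You argue that the Jacobian at the fold point is not nilpotent ``by the previous paragraph,'' but that paragraph only establishes that each of the $30$ networks, being among the $831$ of \Cref{thm:834_quadratic_trimolecular_fold}, admits \emph{some} nondegenerate fold point; it does not exclude that the same network, at \emph{other} rate constants, has a positive equilibrium with a double zero eigenvalue (indeed the $33$ Bogdanov--Takens candidates of \Cref{tab:B-T} are all among the $831$ and do admit nilpotent Jacobians). The gap is easily closed: by \Cref{rem:fold_quadratic}~(c) the only networks among the $831$ for which the second eigenvalue can vanish are the $33$ listed in \Cref{tab:B-T}, and every one of those contains the reaction $2\X\to3\X$, hence is not bimolecular; so for the $30$ bimolecular networks the second eigenvalue never vanishes at a zero-eigenvalue equilibrium, and combined with $\tr J\leq0$ it is strictly negative. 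Alternatively one can quote, as the paper does, the strict inequality from \cite{boros:hofbauer:2022}. With that patch your argument is sound, modulo the computational search that both you and the paper delegate to \cite{boros_github}.
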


\begin{table}[h!t]
\centering
\begin{tikzpicture}
% Group 1: X -> 2X, X+Y -> 0
\node[below] at (0,0) {\footnotesize
\begin{NiceTabular}{|w{c}{4mm}r@{\hspace{4pt}$\to$\hspace{4pt}}l| *{8}{w{c}{1mm}}|}
\hline
\multicolumn{3}{c}{\begin{tikzpicture}
    \node at (0,0.5) {Group 1};
    \node[rounded corners,fill=fillbg2,draw] at (0,0) {\begin{tabular}{r@{\hspace{4pt}$\to$\hspace{4pt}}l}
$\X$ & $2\X$ \\ $\X+\Y$ & $\0$
\end{tabular}};
\end{tikzpicture}} &
\RowStyle{\rotate} $2\X\to\0$ & $2\X\to\Y$ & $2\X\to2\Y$ & $\Y\to\X$ & $\Y\to2\X$ & $\Y\to\X+\Y$ & $2\Y\to\X$ & $2\Y\to2\X$ \\
\hline
& $\0$ &  $\Y$ & $\bullet$ & $\bullet$ & $\bullet$ & $\bullet$ & $\bullet$ & $\bullet$ & $\bullet$ & $\bullet$ \\
& $\Y$ & $2\Y$ & & & & & & & $\bullet$ & $\bullet$ \\
\hline
\end{NiceTabular}
};

% Group 2: Y -> 2X, X+Y -> 2Y
\node[below] at (8,0) {\footnotesize
\begin{NiceTabular}{|w{c}{4mm}r@{\hspace{4pt}$\to$\hspace{4pt}}l| *{8}{w{c}{1mm}}|}
\hline
\multicolumn{3}{c}{\begin{tikzpicture}
    \node at (0,0.5) {Group 2};
    \node[rounded corners,fill=fillbg2,draw] at (0,0) {\begin{tabular}{r@{\hspace{4pt}$\to$\hspace{4pt}}l}
$\Y$ & $2\X$ \\ $\X+\Y$ & $2\Y$
\end{tabular}};
\end{tikzpicture}} &
\RowStyle{\rotate} $\0\to\Y$ & $\0\to\X+\Y$ & $\X\to\Y$ & $\X\to\X+\Y$ & $\X\to2\Y$ & $2\Y\to\0$ & $2\Y\to\X$ & $2\Y\to2\X$ \\
\hline
& $\X$  & $\0$ & $\bullet$ & $\bullet$ & & & & \orange{$\bullet$} & \orange{$\bullet$} & \\
& $2\X$ & $\0$ & $\bullet$ & $\bullet$ & $\bullet$ & $\bullet$ & $\bullet$ & \orange{$\bullet$} & \orange{$\bullet$} & \orange{$\bullet$} \\
\hline
\end{NiceTabular}
};

% Group 3: Y -> 2X, 2X -> 2Y
\node[below] at (4,-3) {\footnotesize
\begin{NiceTabular}{|w{c}{1mm}r@{\hspace{4pt}$\to$\hspace{4pt}}l| *{8}{w{c}{1mm}}|}
\hline
\multicolumn{3}{c}{\begin{tikzpicture}
    \node at (0,0.5) {Group 3};
    \node[rounded corners,fill=fillbg2,draw] at (0,0) {\begin{tabular}{r@{\hspace{4pt}$\to$\hspace{4pt}}l}
$\Y$ & $2\X$ \\ $2\X$ & $2\Y$
\end{tabular}};
\end{tikzpicture}} &
\RowStyle{\rotate} $\0\to\X$ & $\0\to\Y$ & $\0\to\X+\Y$ & $2\Y\to\0$ & $2\Y\to\X$ & $\X+\Y\to\0$ & $\X+\Y\to\X$ & $\X+\Y\to\Y$ \\
\hline
& $\X$ & $\0$ & $\bullet$ & $\bullet$ & $\bullet$ & \orange{$\bullet$} & \orange{$\bullet$} & \orange{$\bullet$} & \orange{$\bullet$} & \orange{$\bullet$} \\
\hline
\end{NiceTabular}
};
\end{tikzpicture}
\caption{The 30 dynamically nonequivalent, bimolecular $(2,4,2)$ mass-action networks that admit a fold bifurcation (see \Cref{thm:30_bimolecular_fold}). The networks are partitioned into 3 groups. In each group, the first and the second reactions are the same for all networks (indicated in the top left corner), while the third and the fourth reactions are the row- and the column headers, respectively. A bullet indicates a fold bifurcation, while networks without a bullet do not admit a fold bifurcation. For the 10 networks marked with an orange bullet, the origin is an asymptotically stable equilibrium, and hence, these networks admit multiple attracting equilibria in the nonnegative quadrant. The 20 networks with a black bullet admit at most one asymptotically stable nonnegative equilibrium.}
\label{tab:fold_bimolecular}
\end{table}

\begin{remark} \label{rem:fold_bimolecular}
We have collected several comments on the networks appearing in \Cref{lem:bimolecular_fold_base_set} and \Cref{thm:30_bimolecular_fold}. 

\begin{enumerate}[(a)]

\item The 30 networks in \Cref{tab:fold_bimolecular} are diagonally nonequivalent, although some are diagonally equivalent to other quadratic, trimolecular networks \cite{boros_github}.

\item From \Cref{rem:fold_quadratic} \eqref{item:fold_quadratic_E6} above, all 30 networks in \Cref{tab:fold_bimolecular} are atoms of fold bifurcation (see \Cref{subsec:inheritance}). In fact, this also follows from \Cref{thm:quadratic_rank-one_fold,thm:nplus2sources}, which tells us that these 30 networks are the \emph{smallest} planar, bimolecular networks that admit a fold bifurcation (meaning that no planar, bimolecular network with fewer than $4$ reactions or molecularity less than two admits a fold bifurcation).

\item\label{item:fold_bimolecular_trace_negative} For all the 30 networks in \Cref{tab:fold_bimolecular}, the trace of the Jacobian matrix is negative at any positive equilibrium. This follows directly from the proof of \cite[Theorem 4.1]{boros:hofbauer:2022}, where we show that the divergence at any positive equilibrium is negative, except for  some special systems that admit only a unique positive equilibrium.

\item \label{item:fold_bimolecular_at_most_two_equil} We can confirm that other than the 30 mass-action networks in \Cref{tab:fold_bimolecular}, no bimolecular $(2,4,2)$ networks admit multiple positive nondegenerate equilibria (see \Cref{rem:fold_quadratic} \eqref{item:fold_quadratic_unique_equil}). Further, amongst the 30 networks appearing in \Cref{thm:30_bimolecular_fold} none admit more than two positive nondegenerate equilibria (see \Cref{rem:fold_quadratic} \eqref{item:fold_quadratic_at_most_two_equil}). Furthermore, whenever there are two positive equilibria, one of them is a saddle, and the other one is linearly stable (this follows from \Cref{rem:fold_quadratic} \eqref{item:fold_quadratic_at_most_two_equil} and bimolecularity as in \eqref{item:fold_bimolecular_trace_negative} above).
%\purple{Here fold bifurcation is really equivalent to admitting multiple positive nondegenerate equilibria. There are no exceptional networks with 2d center manifold.}

\item In \cite{joshi:shiu:2013}, Joshi and Shiu studied bimolecular, fully open mass-action networks with two nonflow reactions that admit multiple positive nondegenerate equilibria. Three networks in \Cref{tab:fold_bimolecular} (one in Group 2 and two in Group 3) have only two nonflow reactions. Hence, three fully open networks in \cite[Fig. 3]{joshi:shiu:2013} inherit the fold bifurcation (and thereby the capacity for multiple positive nondegenerate equilibria) from a network with only two flow reactions instead of four; the inheritance is via enlargement E1 or E2 \cite{banaji:boros:hofbauer:2024b}.

\item In all the $838-829=9$ networks in \Cref{lem:bimolecular_fold_base_set} which do not admit a positive, nondegenerate equilibrium, a \emph{vertical fold} bifurcation occurs as these networks are among the 33 networks in \Cref{rem:fold_quadratic} \eqref{item:fold_quadratic_vertical}. In each case, at the critical value, there is a line of equilibria (either through the origin or vertical or horizontal), see \cite{boros_github}.

\end{enumerate}
\end{remark}

We close this section with a discussion of {\em multistability} in bimolecular $(2,4,2)$ mass-action networks. Although no bimolecular $(2,4,2)$ network admits multiple asymptotically stable positive equilibria (\Cref{rem:fold_bimolecular} \eqref{item:fold_bimolecular_at_most_two_equil}), there might coexist two asymptotically stable \emph{nonnegative} equilibria in these networks. Indeed, for some rate constants, both the origin and a positive equilibrium of the network
\begin{align}\label{eq:wilhelm}
%\begin{center}
\begin{tabular}{*{4}{r@{\hspace{4pt}$\to$\hspace{4pt}}l}}
$\Y$ & $2\X$ & $2\X$ & $\X+\Y$ & $\X+\Y$ & $\Y$ & $\X$ & $\0$ \\
\end{tabular}
%\end{center}    
\end{align}
discussed by Wilhelm \cite[Eq.\ (2)]{wilhelm:2009} (which is simply equivalent to the last network in Group 3 in \Cref{tab:fold_bimolecular}) are asymptotically stable.
%It turns out that, up to isomorphism and dynamical equivalence, including Wilhelm's example, there are 30 networks with these properties, see \Cref{thm:30_bimolecular_fold} below. We first enumerate the relevant networks in \Cref{lem:bimolecular_fold_base_set} to prepare for that result. The proofs are in the Mathematica Notebook \cite{boros_github}.
In \cite{wilhelm:2009}, Wilhelm's goal was to find the \emph{smallest} bimolecular mass-action network that admits multiple stable equilibria in the nonnegative orthant. His criterion for being the smallest was to have 1) fewest species, 2) fewest reactions, and 3) fewest terms in the mass-action differential equation; in this order of importance. The mass-action differential equation associated with network \eqref{eq:wilhelm} has 6 terms, the origin is an asymptotically stable equilibrium for all rate constants, and one of the positive equilibria that is born via a fold bifurcation is also asymptotically stable. Wilhelm claims that network \eqref{eq:wilhelm} is the only bimolecular $(2,4,2)$ network with these properties. Our analysis shows that there are further such networks (suggesting that Wilhelm's proof is incomplete). We have found that out of the 30 networks in \Cref{tab:fold_bimolecular}, the origin is an equilibrium in 15 cases. Out of these 15 cases, the origin is asymptotically stable in 10 cases (the networks in \Cref{tab:fold_bimolecular} that are marked with an orange bullet). Out of the 10 networks, 5 (including Wilhelm's network) have 6 terms in their differential equation, and another 5 have 7 terms. Hence, following Wilhelm's criteria for being the \emph{smallest}, there are 5 such networks. We summarise some of these findings in the following lemma.

\begin{lemma}
\label{lem:bistable}
Out of the 30 networks in \Cref{thm:30_bimolecular_fold} (see \Cref{tab:fold_bimolecular}), 10 admit multiple asymptotically stable equilibria in the nonnegative quadrant (the networks marked with an orange bullet in \Cref{tab:fold_bimolecular}). In all 10 cases, one of the asymptotically stable equilibria is the origin. 
\end{lemma}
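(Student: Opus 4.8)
The plan is to reduce the statement to two finite bookkeeping steps — deciding in which of the $30$ networks the origin is an equilibrium, and in which of those it is asymptotically stable — and then to combine this with the fold bifurcation and the equilibrium count already in hand. By \Cref{thm:30_bimolecular_fold} each network admits a nondegenerate fold bifurcation, and by \Cref{rem:fold_bimolecular} \eqref{item:fold_bimolecular_at_most_two_equil} it has at most two positive equilibria, of which, when two coexist, exactly one is linearly (hence asymptotically) stable. Thus any second asymptotically stable equilibrium must lie on the boundary of $\mathbb{R}^2_{\geq0}$; the argument will show this can only be the origin, and only for the $10$ orange-bulleted networks.

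First I would characterise when the origin is an equilibrium. Writing the vector field as $f(x,\kappa)=\Gamma(\kappa\circ x^A)$ and noting that the monomial $x^{A_j}$ equals $1$ at $x=0$ precisely when the $j$-th reactant complex is $\0$ and vanishes otherwise, $f(0,\kappa)$ is the sum of $\kappa_j\Gamma_{\cdot,j}$ over the reactions with empty reactant complex; since each such reaction vector is nonzero with a nonnegative nonzero entry, this sum never vanishes for positive $\kappa$. Hence the origin is an equilibrium exactly when the network has no reaction with empty reactant complex, which, reading off \Cref{tab:fold_bimolecular}, happens in $15$ of the $30$ networks.

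For these $15$ networks I would compute the Jacobian at the origin. Since $\partial x^{A_j}/\partial x_k$ is nonzero at $x=0$ only when the $j$-th reactant complex is $\X_k$, the entry $J(0)_{i,k}$ is the sum of $\kappa_j\Gamma_{i,j}$ over the reactions with reactant complex $\X_k$ — a $2\times2$ matrix assembled purely from the (at most two) monomolecular reactions and read off directly from \Cref{tab:fold_bimolecular}. Imposing $\tr J(0)<0$ and $\det J(0)>0$, one finds that these Hurwitz conditions hold, on a nonempty open set of rate constants, in exactly the $10$ orange-bulleted networks, and fail identically (either $\tr J(0)>0$ or $\det J(0)<0$ for every $\kappa$, so the origin is a source or a saddle) in the other $5$. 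For each of the $10$, rate constants at which both the origin and a positive equilibrium are asymptotically stable are exhibited in \cite{boros_github}: one works on the side of the fold where the linearly stable positive equilibrium exists and chooses the monomolecular rate constants so that $J(0)$ remains Hurwitz.

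It remains to see that the other $20$ networks admit no such coexistence, and that in the $10$ orange cases the origin is the only possible additional attractor. By \Cref{rem:fold_bimolecular} \eqref{item:fold_bimolecular_at_most_two_equil} there is at most one asymptotically stable positive equilibrium; the origin is either not an equilibrium ($15$ of the $20$) or not asymptotically stable (the $5$ above); and a check over all $30$ networks, recorded in \cite{boros_github}, shows that no asymptotically stable equilibrium ever lies on an open coordinate semi-axis. This yields both the exclusion of the $20$ networks and the final clause of the lemma. I expect this converse part to be the main obstacle: unlike the existence direction, ruling out stable equilibria on the coordinate axes uniformly in the rate constants does not follow from the structural results above and is best settled by the explicit computation in \cite{boros_github}.
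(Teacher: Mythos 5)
Your overall strategy (decide where the origin is an equilibrium, then decide its stability, then combine with the fold/equilibrium-count facts) matches the paper's, and your characterisation of the $15$ networks with the origin as an equilibrium is correct. But there is a genuine gap in the central step: you claim that the Hurwitz conditions $\tr J(0)<0$, $\det J(0)>0$ hold precisely for the $10$ orange-bulleted networks. This is false for $3$ of them, namely the networks in Group~2 of \Cref{tab:fold_bimolecular} containing the reaction $2\X\to\0$ (with fourth reaction $2\Y\to\0$, $2\Y\to\X$, or $2\Y\to2\X$). For these, every reaction whose reactant complex is a single species is the reaction $\Y \to 2\X$, so the Jacobian at the origin is $\begin{pmatrix} 0 & 2\kappa_1\\ 0 & -\kappa_1\end{pmatrix}$: it is identically singular, with eigenvalues $0$ and $-\kappa_1$, and the origin is never hyperbolic. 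A literal application of your criterion would therefore yield only $7$ networks, contradicting the count of $10$. The paper handles these $3$ cases by a center-manifold argument: the $x$-axis is invariant and is a center manifold, the flow on it is $\dot{x}=-2\kappa_3 x^2$, and hence the origin is asymptotically stable \emph{for the restriction to the nonnegative quadrant} (it is not asymptotically stable as a planar equilibrium, so no linearisation test can detect this). Any correct proof must include such a non-hyperbolic analysis; your proposal has no mechanism for it.

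Two smaller points. First, for the $5$ non-orange networks with the origin as an equilibrium you assert that the Hurwitz conditions ``fail identically'', which is true, but instability of the origin relative to the nonnegative quadrant does not follow from $\det J(0)<0$ alone: for the three saddles in Group~2 one must also check that the stable manifold is tangent to a direction pointing out of the quadrant (the paper verifies the stable eigenvector has negative slope); a saddle whose stable manifold enters the quadrant could still attract boundary orbits. Second, your deferral of ``no asymptotically stable equilibria on the open semi-axes'' to \cite{boros_github} is workable but heavier than needed; the paper simply observes that in these $30$ networks the only boundary equilibria that occur at all are at the origin, which disposes of the converse direction without further computation.
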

\begin{proof}
For the 7 networks with an orange bullet and the reaction $\X \to \0$, the origin is a linearly stable equilibrium. For the remaining 3 networks with an orange bullet (those with the reaction $2\X \to \0$), the Jacobian matrix at the origin is $\begin{pmatrix} 0 & 2\kappa_1 \\ 0 & -\kappa_1 \end{pmatrix}$. Hence there is a one-dimensional stable manifold that is tangent to the line $x+2y=0$, while the $x$-axis, which is invariant, is a one-dimensional center manifold.
Since the flow on the $x$-axis is given by $\dot{x}=-2\kappa_3 x^2$, the origin is asymptotically stable for the system restricted to the nonnegative quadrant.

Another 5 networks in \Cref{tab:fold_bimolecular} have a boundary equilibrium, namely, the origin. For the two networks in Group 1 in \Cref{tab:fold_bimolecular} with the reaction $\Y \to 2\Y$, the Jacobian matrix at the origin has two positive real eigenvalues. Finally, for the 3 networks in Group 2 in \Cref{tab:fold_bimolecular} with the reactions $\X \to \Y$, $\X \to \X+\Y$, or $\X \to 2\Y$, the origin is a saddle with its stable manifold being tangent to a line with negative slope, hence, outside the nonnegative quadrant. 

The remaining 15 networks admit no boundary equilibrium.
\end{proof}

\section{Andronov--Hopf bifurcation}
\label{sec:A-H}

Andronov--Hopf bifurcation is a codimension-one bifurcation occurring on a two-dimensional center manifold but ruled out in two-species bimolecular networks (see the proof of \cite[Theorem 4.1]{boros:hofbauer:2022}). Hence, we focus on quadratic, {\em trimolecular} $(2,m,2)$ networks. However, such networks with $m=3$ do not admit Andronov--Hopf bifurcation \cite[Sections 4.5 and 4.6]{banaji:boros:hofbauer:2024a}, and we are led to study the case $m=4$. (But note that a {\em vertical} Andronov--Hopf bifurcation occurs already in three-reaction, quadratic, trimolecular mass-action networks provided we allow more than two species \cite[Theorem 4 (III)]{banaji:boros:hofbauer:2024a}.) In \Cref{lem:946} we determine the base set of networks satisfying the necessary conditions for the existence of an isolated periodic orbit or an Andronov--Hopf bifurcation listed in \Cref{lem:mixed_source,lem:2Xto3X,lem:nitecki}. Then, in \Cref{lem:198} we enumerate the networks that admit a positive equilibrium with a pair of purely imaginary eigenvalues. It turns out that all of these networks admit an Andronov--Hopf bifurcation of some kind, and in \Cref{thm:A-H} we describe these bifurcations. 

%In this section, our goal is to classify all quadratic, trimolecular $(2,4,2)$ mass-action networks that admit an Andronov--Hopf bifurcation. In \Cref{sec:B-T} we have found 33 networks that admit a Bogdanov--Takens bifurcation, and hence, an Andronov--Hopf bifurcation (8 supercritical, 2 vertical, 23 subcritical). In that section (see \Cref{lem:433}), we investigated networks with the four reactant complexes being distinct because otherwise, multiple positive nondegenerate equilibria are forbidden. Here we impose the weaker assumption that the four reactant complexes do not lie on a line; this is necessary for the existence of a periodic orbit, see \Cref{lem:nitecki}.

\begin{lemma}\label{lem:946}
There are 946 dynamically nonequivalent, dynamically nontrivial, quadratic, trimolecular $(2,4,2)$ mass-action networks with the reactant complexes not being on a line, having the reactant complex $\X+\Y$, having the reaction $2\X \to 3\X$ or $2\Y \to 3\Y$, having no redundant reactions, and admitting a positive nondegenerate equilibrium.
\end{lemma}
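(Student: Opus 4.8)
The plan is to reduce the statement to a finite enumeration and then to organise that enumeration. First, one lists the available reactions: a quadratic, trimolecular reaction on the species $\{\X,\Y\}$ has its reactant complex among the six bimolecular complexes $\0,\X,\Y,2\X,\X+\Y,2\Y$ and its (distinct) product complex among the ten trimolecular complexes obtained by adjoining $3\X,2\X+\Y,\X+2\Y,3\Y$ to this list, giving $6\times 10-6=54$ reactions in all. A quadratic, trimolecular $(2,4,2)$ mass-action network is then precisely a choice of four of these $54$ reactions, so there are at most $\binom{54}{4}$ candidate networks; in practice one fixes $\X+\Y$ as a reactant complex and one of the two required reactions at the outset, which shrinks the search considerably.

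Next one applies the purely combinatorial filters to each candidate, working from its stoichiometric matrix $\Gamma$ and reactant matrix: retain only those of rank $2$; with reactant complexes not all on a common line (necessary for a periodic orbit by \Cref{lem:nitecki}); containing the reactant complex $\X+\Y$ (necessary by \Cref{lem:mixed_source}) and at least one of the reactions $2\X\to 3\X$, $2\Y\to 3\Y$ (necessary by \Cref{lem:2Xto3X}); and with no redundant reaction, i.e., such that no reaction vector lies in the open cone generated by the other reaction vectors sharing its reactant complex. Note that the rank-$2$ requirement already forces both species to be nontrivial.

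Then one imposes the two algebraic conditions. Dynamical nontriviality is the linear-feasibility question $\ker\Gamma\cap\mathbb{R}^4_+\neq\emptyset$ on the two-dimensional kernel. For the existence of a positive nondegenerate equilibrium, \Cref{lem:degen} makes $\mathrm{rank}\,[A\,|\,\bm{1}]=3$ necessary; granting this and dynamical nontriviality, the left null space of $[A\,|\,\bm{1}]$ is one-dimensional, so the solvability condition \eqref{eq:solvability} is the single equation $\kappa^W=h(\alpha)^W$ with $W$ a nonzero row, which for any $\alpha$ in the (nonempty) parameter set holds for some $\kappa\in\mathbb{R}^4_+$; hence a positive equilibrium exists by \Cref{lem:biject}. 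It remains to discard the exceptional networks (cf.\ the remark following \Cref{lem:degen}) in which every positive equilibrium is degenerate: by \eqref{eq:MAJac} the Jacobian at the equilibrium indexed by $\alpha$ is $\lambda\Gamma D_{h(\alpha)}AD_{1/x}$, which is nonsingular exactly when $\det(\Gamma D_{h(\alpha)}A)\neq 0$; by the Cauchy--Binet expansion the latter is a polynomial in $\alpha$ (in the terminology of \Cref{sec:coords}, a function of the single inner parameter), and the network is kept precisely when it does not vanish identically, a symbolic check performed for each remaining candidate.

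Finally, the survivors are partitioned into dynamical-equivalence classes, and I expect the main obstacle to be exactly this step, as it is the one most susceptible to implementation error. Since all candidates share the same two species, two of them are dynamically equivalent iff, after possibly transposing $\X\leftrightarrow\Y$ and relabelling reactions, they have the same set of reactant complexes and, for each reactant complex, the same relatively open cone generated by the reaction vectors sharing it; this is a finite combinatorial test, but one must take care to neither merge genuinely distinct ODE families nor keep duplicates. The full enumeration, which yields the figure $946$, is carried out in the Mathematica notebook \cite{boros_github}.
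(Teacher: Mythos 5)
Your proposal is correct and takes essentially the same route as the paper: the lemma is established purely by the computer enumeration in the cited Mathematica notebook, and your description of the search space (the $54$ quadratic, trimolecular reactions on two species), the combinatorial filters, the rank/solvability/nondegeneracy checks via \Cref{lem:degen}, \Cref{lem:biject} and the Cauchy--Binet expansion of $\det(\Gamma D_{h(\alpha)}A)$, and the quotient by dynamical equivalence is a faithful account of what that enumeration must do. The only ingredient neither you nor the paper supplies in prose is the actual figure $946$, which in both cases comes from running the computation.
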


\begin{lemma}\label{lem:198}
Out of the 946 networks in \Cref{lem:946}, 198 admit a positive equilibrium with a pair of purely imaginary eigenvalues, see \Cref{tab:A-H}.
\end{lemma}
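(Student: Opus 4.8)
The plan is to reduce \Cref{lem:198} to a finite list of semialgebraic feasibility problems, one per network, and to dispatch them by real quantifier elimination. For a planar system, a positive equilibrium has a pair of purely imaginary, nonzero eigenvalues precisely when the Jacobian $J$ there satisfies $\tr J = 0$ and $\det J > 0$. By \eqref{eq:MAJac}, the Jacobian at a positive equilibrium of a dynamically nontrivial $(2,4,2)$ network is $J = \lambda\,\Gamma D_{h(\alpha)} A D_{1/x}$, where $\lambda > 0$, $\alpha$ is the single inner parameter (here $m-r-1 = 1$), and $x \in \mathbb{R}^2_+$. Multiplying $J$ by $\lambda > 0$ changes neither the vanishing of $\tr J$ nor the sign of $\det J$, so we may set $\lambda = 1$. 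Writing $T(\alpha,x) := \tr(\Gamma D_{h(\alpha)} A D_{1/x})$ and $D(\alpha,x) := \det(\Gamma D_{h(\alpha)} A D_{1/x})$ — rational functions of $\alpha, x_1, x_2$ whose numerators are polynomials after clearing denominators — the task for each network is to decide whether the set $\{T = 0,\ D > 0\}$ meets the open region $x_1 > 0$, $x_2 > 0$, $h(\alpha) \in \mathbb{R}^m_+$.

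Next I would justify that this sign-condition question is exactly the question of admitting such an equilibrium. Given any $\alpha$ with $h(\alpha) \in \mathbb{R}^m_+$, any $x \in \mathbb{R}^2_+$, and any $\lambda > 0$, equation \eqref{eq:main} can be solved for $\ln\kappa$, producing rate constants $\kappa \in \mathbb{R}^m_+$ for which $x$ is a positive equilibrium on its stoichiometric class (cf.\ \Cref{lem:biject} and the construction in \Cref{sec:coords}). Hence every admissible pair $(\alpha,x)$ is realised by some choice of rate constants, and the network admits a positive equilibrium with a pair of purely imaginary eigenvalues if and only if the semialgebraic set above is nonempty. Note that the hypotheses carried over from \Cref{lem:946} — in particular the presence of $2\X \to 3\X$ or $2\Y \to 3\Y$, forced by \Cref{lem:2Xto3X} — are exactly what is needed for $T$ (the divergence) to be able to vanish together with $D > 0$.

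The execution is then computational. For each of the 946 networks from \Cref{lem:946} one writes down $\Gamma$, $A = \Gamma_\ell^{\mathrm{t}}$, and an affine, entrywise-positive parameterisation $h(\alpha)$ of a cross-section of $\ker_+\Gamma$; forms the polynomials $T$ and $D$; and tests feasibility of $\{T = 0,\ D > 0,\ x_1 > 0,\ x_2 > 0,\ h(\alpha) > 0\}$ by cylindrical algebraic decomposition (e.g.\ Mathematica's \texttt{Reduce}/\texttt{Resolve}). A useful simplification is that, since $h$ is affine in $\alpha$ and $T$ is linear in the entries of $h(\alpha)$, the equation $T = 0$ is affine in $\alpha$; solving it gives $\alpha$ as an explicit rational function of $x_1, x_2$, which on substitution reduces the problem to a positivity check in the two variables $x_1, x_2$ only. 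The networks passing the test are precisely those admitting a positive equilibrium with a pair of purely imaginary eigenvalues; enumerating them and selecting representatives up to dynamical equivalence yields the 198 networks listed in \Cref{tab:A-H}. All of this is carried out in the Mathematica notebook \cite{boros_github}.

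The main obstacle is making the 946 feasibility decisions both correct and tractable: $T$ and $D$ are genuine multivariate polynomials restricted to sign-constrained domains, so each decision is an honest real-quantifier-elimination instance, and the networks lying near the boundary between cases — those that will turn out, in \Cref{thm:A-H}, to exhibit a vertical Andronov--Hopf bifurcation or to border higher-codimension phenomena — give degenerate systems where one must carefully separate $\det > 0$ from $\det \ge 0$ and confirm that the imaginary part is genuinely nonzero. A secondary concern is bookkeeping: the final count of 198 must be taken modulo the same notion of dynamical nonequivalence used for the 946-network base set, so after the feasibility pass one re-verifies that no two surviving networks coincide under relabelling of species and reactions combined with a rescaling of rate constants.
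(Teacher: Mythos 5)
Your proposal is correct and matches the paper's approach: the paper gives no written proof of this lemma but defers the enumeration entirely to the Mathematica notebook \cite{boros_github}, and the computation there is exactly what you describe --- reduce ``purely imaginary pair'' to $\tr J=0$, $\det J>0$ for $J=\Gamma D_{h(\alpha)}AD_{1/x}$ (dropping $\lambda$), note that every admissible $(\alpha,x,\lambda)$ is realised by solving \eqref{eq:main} for $\kappa$, and decide feasibility of the resulting semialgebraic system for each of the 946 networks. Your only superfluous step is re-checking dynamical nonequivalence of the surviving 198, which is automatic since they form a subset of the already-nonequivalent 946.
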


\begin{table}[h!t]
\centering
\begin{tikzpicture}
% Group 1: X+Y -> 0
\node[below] at (0,0) {\footnotesize
\begin{NiceTabular}{|cr@{\hspace{4pt}$\to$\hspace{4pt}}l| *{8}{w{c}{1mm}}|}
\hline
\multicolumn{3}{c}{\begin{tikzpicture}
    \node at (0,0.5) {Group 1};
    \node[rounded corners,fill=fillbg2,draw] at (0,0) {\begin{tabular}{r@{\hspace{4pt}$\to$\hspace{4pt}}l}
$2\X$ & $3\X$ \\ $\X+\Y$ & $\0$
\end{tabular}};
\end{tikzpicture}} &
\RowStyle{\rotate} $\X\to2\X$ & $\0 \to  \X$ & $\0 \to 2\X+\Y$ & $\Y \to  \0$ & $\Y \to  \X$ & $\Y \to 2\X$ & $\Y \to 3\X$ & $\Y \to  \X+\Y$ \\
\hline
&  $\X$ & $2\Y$         &        &        &        &        &        & \obllt & \obllt &        \\
&  $\X$ & $3\Y$         &        &        &        &        & \obllt & \obllt & \obllt &        \\
&  $\X$ & $\X+\Y$  &        &        &        &        & \obllt & \obllt & \obllt &        \\
& $2\X$ & $\Y$          & \bbllt & \gbllt & \gbllt & \gbllt & \gbllt & \gbllt & \gbllt & \gbllt \\
& $2\X$ & $2\Y$         & \bbllt & \gbllt & \gbllt & \gbllt & \gbllt & \gbllt & \gbllt & \gbllt \\
& $2\X$ & $3\Y$         & \bbllt & \gbllt & \gbllt & \gbllt & \gbllt & \gbllt & \gbllt & \gbllt \\
& $2\X$ & $\X+2\Y$ & \bbllt & \gbllt & \gbllt & \gbllt & \gbllt & \gbllt & \gbllt & \gbllt \\
& $2\X$ & $2\X+\Y$ & \bbllt & \gbllt & \gbllt & \gbllt & \gbllt & \gbllt & \gbllt & \gbllt \\
\hline
\end{NiceTabular}
};

% Group 2: X+Y -> Y
\node[below] at (8.5,0.225) {\footnotesize
\begin{NiceTabular}{|cr@{\hspace{4pt}$\to$\hspace{4pt}}l| w{c}{1mm} *{3}{w{c}{4mm}} *{5}{w{c}{1mm}}|}
\hline
\multicolumn{3}{c}{\begin{tikzpicture}
    \node at (0,0.5) {Group 2};
    \node[rounded corners,fill=fillbg2,draw] at (0,0) {\begin{tabular}{r@{\hspace{4pt}$\to$\hspace{4pt}}l}
$2\X$ & $3\X$ \\ $\X+\Y$ & $\Y$
\end{tabular}};
\end{tikzpicture}} &
\RowStyle{\rotate} $\Y \to  \0$ &  $\Y \to  \X$ &  $\Y \to 2\X$ &  $\Y \to 3\X$ & $2\Y \to  \0$ & $2\Y \to  \X$ & $2\Y \to 2\X$ & $2\Y \to 3\X$ & $2\Y \to 2\X+\Y$ \\
\hline
&  $\X$ & $\Y$          &        &        & \obllt & \obllt & & & & & \\
&  $\X$ & $2\Y$         &        & \obllt & \obllt & \obllt & & & & & \\
&  $\X$ & $3\Y$         &        & \obllt & \obllt & \obllt & & & & & \\
&  $\X$ & $\X+\Y$  &        & \obllt & \obllt & \obllt & & & & & \\
&  $\X$ & $2\X+\Y$ & \bbllt & \gbllt \bbllt \obllt & \gbllt \bbllt \obllt & \gbllt \bbllt \obllt & \obllt & \obllt & \obllt & \obllt & \obllt \\
& $2\X$ & $\Y$          & \gbllt & \gbllt & \gbllt & \gbllt & & & & & \\
& $2\X$ & $2\Y$         & \gbllt & \gbllt & \gbllt & \gbllt & & & & & \\
& $2\X$ & $3\Y$         & \gbllt & \gbllt & \gbllt & \gbllt & & & & & \\
& $2\X$ & $\X+2\Y$ & \gbllt & \gbllt & \gbllt & \gbllt & & & & & \\
& $2\X$ & $2\X+\Y$ & \gbllt & \gbllt & \gbllt & \gbllt & & & & & \\
\hline
\end{NiceTabular}
};

% Group 3: X+Y -> 2Y or 3Y
\node[below] at (4.61,-6.18) {\footnotesize
\begin{NiceTabular}{|w{c}{4mm}r@{\hspace{4pt}$\to$\hspace{4pt}}l| *{26}{w{c}{1mm}}|}
\hline
\multicolumn{3}{c}{\begin{tikzpicture}
    \node at (0,0.5) {Group 3};
    \node[rounded corners,fill=fillbg2,draw] at (0,0) {\begin{tabular}{r@{\hspace{4pt}$\to$\hspace{4pt}}l}
$2\X$ & $3\X$ \\ $\X+\Y$ & $b\Y$ \\
\multicolumn{2}{c}{($b=2$ or $3$)}
\end{tabular}};
\end{tikzpicture}} &
\RowStyle{\rotate} $\0 \to  \X$ & $\0 \to 2\X+\Y$ & $\0 \to \X+\Y$ & $\0 \to \X+2\Y$ & $\0 \to \Y$ & $\X \to 2\X$ & $\X \to 2\X+\Y$ & $\X \to \X+\Y$ & $\X \to 3\Y$ & $\X \to 2\Y$ & $\X \to \Y$ & $\Y \to \X$ & $\Y \to 2\X$ & $\Y \to 3\X$ & $\Y \to \X+\Y$ & $\Y \to \X+2\Y$ & $2\X \to 2\X+\Y$ & $2\X \to \X+2\Y$ & $2\X \to 3\Y$ & $2\X \to 2\Y$ & $2\X \to \Y$ & $2\Y \to  \0$ & $2\Y \to \X$ & $2\Y \to 2\X$ & $2\Y \to 3\X$ & $2\Y \to 2\X+\Y$ \\
\hline
& $\Y$ & $\0$ & \gbllt & \gbllt & \gbllt & \gbllt & \gbllt & & \gbllt & \gbllt & \gbllt & \gbllt & \gbllt & \gbllt & \gbllt & \gbllt & \gbllt & \gbllt & \gbllt & \gbllt & \gbllt & \gbllt & \gbllt & \bbllt & \gbllt & \gbllt & \gbllt & \gbllt \\
& $2\Y$ & $\0$ & \bbllt & \obllt & \obllt & \obllt & & \bbllt & \obllt &        &        &        &         & \gbllt & \gbllt & \gbllt & \bbllt & \obllt &        &        &         &        &        &         & & & & \\
\hline
\end{NiceTabular}
};

% Group 4: X+Y -> X
\node[below] at (-1.5,-9.5) {\footnotesize
\begin{NiceTabular}{|w{c}{5mm}r@{\hspace{4pt}$\to$\hspace{4pt}}l| w{c}{1mm}|}
\hline
\multicolumn{3}{c}{\begin{tikzpicture}
    \node at (0,0.5) {Group 4};
    \node[rounded corners,fill=fillbg2,draw] at (0,0) {\begin{tabular}{r@{\hspace{4pt}$\to$\hspace{4pt}}l}
$2\X$ & $3\X$ \\ $\X+\Y$ & $\X$
\end{tabular}};
\end{tikzpicture}} &
\RowStyle{\rotate} $\Y\to\X+2\Y$ \\
\hline
&  $\X$ &  $\0$ & \bbllt \\
&  $\X$ &  $\Y$ & \obllt \\
&  $\X$ & $2\Y$ & \obllt \\
&  $\X$ & $3\Y$ & \obllt \\
\hline
\end{NiceTabular}
};

% Group 5: X+Y -> 2X
\node[below] at (3.6,-9.5) {\footnotesize
\begin{NiceTabular}{|w{c}{5mm}r@{\hspace{4pt}$\to$\hspace{4pt}}l| *{5}{w{c}{1mm}}|}
\hline
\multicolumn{3}{c}{\begin{tikzpicture}
    \node at (0,0.5) {Group 5};
    \node[rounded corners,fill=fillbg2,draw] at (0,0) {\begin{tabular}{r@{\hspace{4pt}$\to$\hspace{4pt}}l}
$2\X$ & $3\X$ \\ $\X+\Y$ & $2\X$
\end{tabular}};
\end{tikzpicture}} &
\RowStyle{\rotate} $\0 \to           \Y$ & $\0 \to \X+2\Y$ & $\0 \to \X+\Y$ & $\0 \to 2\X+\Y$ & $\Y \to  \X+2\Y$ \\
\hline
&  $\X$ &  $\0$ & \bbllt & \obllt & \obllt & \obllt & \gbllt \\
\hline
\end{NiceTabular}
};

% Group 6: X+Y -> 3X
\node[below] at (9.7,-9.5) {\footnotesize
\begin{NiceTabular}{|w{c}{5mm}r@{\hspace{4pt}$\to$\hspace{4pt}}l| *{5}{w{c}{1mm}}|}
\hline
\multicolumn{3}{c}{\begin{tikzpicture}
    \node at (0,0.5) {Group 6};
    \node[rounded corners,fill=fillbg2,draw] at (0,0) {\begin{tabular}{r@{\hspace{4pt}$\to$\hspace{4pt}}l}
$2\X$ & $3\X$ \\ $\X+\Y$ & $3\X$
\end{tabular}};
\end{tikzpicture}} &
\RowStyle{\rotate} $\0 \to           \Y$ & $\0 \to \X+2\Y$ & $\0 \to \X+\Y$ & $\0 \to 2\X+\Y$ & $\Y \to  \X+2\Y$ \\
\hline
&  $\X$ &  $\0$ & \bbllt & \pbllt & \obllt & \obllt & \gbllt \\
\hline
\end{NiceTabular}
};

% Group 7: Y -> X, 2X, 3X
\node[below] at (4.61,-12.25) {\footnotesize
\begin{NiceTabular}{|cr@{\hspace{4pt}$\to$\hspace{4pt}}l| *{3}{w{c}{1mm}}|}
\hline
\multicolumn{3}{c}{\begin{tikzpicture}
    \node at (0,0.5) {Group 7};
    \node[rounded corners,fill=fillbg2,draw] at (0,0) {\begin{tabular}{r@{\hspace{4pt}$\to$\hspace{4pt}}l}
$2\X$ & $3\X$ \\ $\Y$ & $a\X$ \\
\multicolumn{2}{c}{($a=1$, $2$, or $3$)}
\end{tabular}};
\end{tikzpicture}} &
\RowStyle{\rotate} $\X+\Y \to \X$ & $\X+\Y \to \0$ & $\X+\Y \to \Y$ \\
\hline
&  $\X+\Y$ &  $2\Y$ & \gbllt & \gbllt & \gbllt \\
&  $\X+\Y$ &  $3\Y$ & \gbllt & \gbllt & \gbllt \\
&  $\X+\Y$ &  $\X+2\Y$ & & \gbllt & \gbllt \\
\hline
\end{NiceTabular}
};

\end{tikzpicture}
\caption{The 198 dynamically nontrivial, quadratic, trimolecular $(2,4,2)$ mass-action networks that admit an Andronov--Hopf bifurcation. The networks are partitioned into 7 groups. In each group, the first and the second reactions are the same for all networks (indicated in the top left corner), while the third and the fourth reactions are the row- and the column headers, respectively. In Group 7, the complex $\X+\Y$ is the source of two reactions, while it is the source of only one reaction for the networks in Groups 1--6. A green \gbllt, a blue \bbllt, or an orange \obllt\, symbol indicates that the Andronov--Hopf bifurcation is \mygreen{supercritical}, \blue{vertical}, or \orange{subcritical}, respectively. Notice that all three types of bifurcations occur for 3 networks in Group 2. The single network that admits a Bautin bifurcation is marked with the purple symbol \pbllt; the second focal value is positive. Networks without any symbol do not admit a positive equilibrium with a pair of purely imaginary eigenvalues.}
\label{tab:A-H}
\end{table}

The transversality condition of the Andronov--Hopf bifurcation (i.e., that the eigenvalues cross the imaginary axis with nonzero speed \cite[Theorem 3.3 (B.2)]{kuznetsov:2023}) is easily verified for all 198 networks in \Cref{lem:198}. To decide whether the bifurcation is supercritical, subcritical, or degenerate, one computes the first focal value, $L_1$. At the critical parameter value, the stability of the equilibrium is determined by the sign of the first nonzero focal value. Hence, when $L_1$ vanishes, one calculates the further focal values ($L_2, L_3, \ldots$) until a nonzero one is found. When $L_k=0$ for all $k\geq1$, the equilibrium is a center, and the Andronov--Hopf bifurcation is termed \emph{vertical}. For planar quadratic systems, an equilibrium with a pair of purely imaginary eigenvalues and $L_1=L_2=L_3=0$ is a center, see (the proof of) the Kapteyn--Bautin Theorem in \cite[Section 8.7]{dumortier:llibre:artes:2006}. The following theorem summarises the result of the analysis of the focal values in the 198 networks in \Cref{tab:A-H}. All the computations can be found in the Mathematica Notebook \cite{boros_github}.

%We find that $L_1<0$ (supercritical) for 135 networks, while $L_1>0$ (subcritical) for 42 networks. For the remaining 21 networks, $L_1$ can vanish:
%\begin{itemize}
%    \item For 17 networks, whenever a positive equilibrium has a pair of purely imaginary eigenvalues, we find $L_1=0$. In this case, the equilibrium will be shown to be a center (networks 1--6 and 10--20 in \Cref{tab:A-H_vertical}). Thus, the Andronov--Hopf bifurcation is vertical.
%    \item For 3 networks, $L_1$ can have any sign and whenever $L_1=0$, the equilibrium is a center (networks 7--9 in \Cref{tab:A-H_vertical}). Hence, these networks admit supercritical, subcritical, and vertical Andronov--Hopf bifurcations.
%    \item For 1 network, $L_1$ can have any sign and whenever $L_1=0$, the second focal value, $L_2$, is positive, and hence, the equilibrium is repelling. It undergoes a subcritical Bautin bifurcation (see \Cref{subsec:bautin} below).
%\end{itemize}

\begin{theorem} \label{thm:A-H}
The analysis of the 198 networks in \Cref{tab:A-H} leads to the following classification.
\begin{itemize}
    \item $L_1<0$ for the 135 networks marked with \mygreen{$-$}, and hence, the Andronov--Hopf bifurcation is supercritical.
    \item $L_1>0$ for the 42 networks marked with \orange{$+$}, and hence, the Andronov--Hopf bifurcation is subcritical.
    \item $L_1=L_2=L_3=0$  for the 17 networks marked with \blue{$0$}, and hence, the Andronov--Hopf bifurcation is vertical.
    \item $L_1$ can have any sign and $L_1=0$ implies $L_2=L_3=0$ for the 3 networks marked with \mygreen{$-$}\blue{$0$}\orange{$+$}, and hence, supercritical, vertical, and subcritical Andronov--Hopf bifurcations are all admitted.
    \item $L_1$ can have any sign and $L_1=0$ implies $L_2>0$ for the network marked with \purple{B}, and hence, the equilibrium undergoes a subcritical Bautin bifurcation.
\end{itemize}
\end{theorem}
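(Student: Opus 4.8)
The plan is to treat this as a finite symbolic computation carried out uniformly over the $198$ networks of \Cref{tab:A-H}, made tractable by the recoordinatisation of \Cref{sec:coords}, and finished off in the degenerate cases by the Kapteyn--Bautin theorem. Each network in \Cref{lem:198} is a dynamically nontrivial $(2,4,2)$ network admitting a positive nondegenerate equilibrium, so $\rank[A\,|\,\bm 1] = 3$ and \Cref{sec:coords} applies with exactly $m-n-1 = 1$ inner parameter and, after time-rescaling, a single effective outer parameter. Thus each network is smoothly equivalent to a family $\dot y_1 = \beta\,p(y_1,y_2;\gamma)$, $\dot y_2 = q(y_1,y_2;\gamma)$ with $p,q$ quadratic, $\beta>0$ the outer parameter, $\gamma>0$ the inner parameter, and only $\gamma$ affecting the equilibrium set (\Cref{lem:biject}). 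By \Cref{rem:fold_quadratic}\eqref{item:fold_quadratic_at_most_two_equil} there are at most two positive equilibria, and for each we solve $p=q=0$ explicitly in terms of $\gamma$. At such an equilibrium $J = \begin{pmatrix}\beta p_{y_1} & \beta p_{y_2}\\ q_{y_1} & q_{y_2}\end{pmatrix}$; a pair of purely imaginary eigenvalues forces $\tr J = \beta p_{y_1}+q_{y_2}=0$, which determines $\beta$ as a function of $\gamma$ and the equilibrium, together with $\det J>0$, which confines $\gamma$ to an explicit interval. This is exactly the data recorded by the bullets in \Cref{tab:A-H}, and the transversality condition $\tfrac{d}{d\mu}\operatorname{Re}\lambda\neq0$ is verified directly as noted before the theorem.

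Next, for each network one translates the equilibrium to the origin, applies the linear change of variables bringing $J$ to the form $\begin{pmatrix}0 & -\omega\\ \omega & 0\end{pmatrix}$ with $\omega=\sqrt{\det J}>0$, and evaluates the first Lyapunov coefficient using the standard planar first-focal-value formula \cite{kuznetsov:2023}. Because $p$ and $q$ are quadratic, the cubic contributions to that formula vanish and $L_1$ reduces to an explicit rational function of $\gamma$ (after substituting $\beta$ from the trace condition). Determining the sign of this function over the admissible $\gamma$-interval settles the $135$ networks with $L_1<0$ (supercritical), the $42$ networks with $L_1>0$ (subcritical), and isolates the $21$ remaining networks on which $L_1$ vanishes for some admissible $\gamma$.

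For those $21$ networks we compute $L_2$, and where necessary $L_3$, restricted to the locus $\{L_1=0\}$. For the $17$ ``vertical'' networks, and for the $3$ networks marked \mygreen{$-$}\blue{$0$}\orange{$+$}, we find $L_1=L_2=L_3=0$ on this locus; by the Kapteyn--Bautin theorem \cite[Section 8.7]{dumortier:llibre:artes:2006} a weak focus of a planar quadratic system with $L_1=L_2=L_3=0$ is a center, so \emph{all} focal values vanish there and the Andronov--Hopf bifurcation is vertical. In the \mygreen{$-$}\blue{$0$}\orange{$+$} case the additional observation that $\operatorname{sgn}L_1$ is non-constant on the admissible interval then yields all three behaviours. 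For the single network marked \pbllt\ we instead find $L_2>0$ on $\{L_1=0\}$ and check that the map from the two effective parameters $(\beta,\gamma)$ to $(\operatorname{Re}\lambda, L_1)$ is a local diffeomorphism at the codimension-two point; by the normal-form theory for the generalised Hopf bifurcation \cite[Section 8.3]{kuznetsov:2023} this is a nondegenerate Bautin bifurcation, which is subcritical since $L_2>0$.

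The main obstacle is Step~4 rather than the bulk sign analysis. The per-network computation of $L_1$ on a one-parameter admissible interval is routine, but the $198$ cases must be organised so they are genuinely exhausted (hence the notebook \cite{boros_github}). The delicate point is that, for the $20$ vertical-type networks, one must establish \emph{exactly}---with no numerical rounding---that $L_2$ and $L_3$ vanish on the one-dimensional set $\{L_1=0\}$, so that Kapteyn--Bautin can be invoked to terminate the focal-value expansion; this is where exact symbolic algebra is essential. For the Bautin network, the genuinely new content is the definite sign $L_2>0$ on $\{L_1=0\}$ together with the transversality of the unfolding, which must be checked against the full two-parameter family supplied by \Cref{sec:coords}, not merely along the Andronov--Hopf curve.
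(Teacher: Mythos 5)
Your proposal follows essentially the same route as the paper: a network-by-network symbolic computation of the focal values (made finite and organised by the recoordinatisation of \Cref{sec:coords}), with the Kapteyn--Bautin theorem invoked to terminate the focal-value expansion in the degenerate cases and the normal-form conditions of \cite[Section 8.3]{kuznetsov:2023} checked for the single Bautin network; the paper's proof is exactly this computation, delegated to \cite{boros_github}. The only addition in the paper is the supplementary \Cref{lem:A-H_vertical}, which reconfirms the 20 vertical cases by exhibiting explicit structural reasons (linearity after a Dulac factor, Lotka--Volterra or Hamiltonian form, reversibility) for the center, independently of Kapteyn--Bautin.
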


\begin{remark} \label{rem:A-H}
We conclude this section with several comments.
\begin{enumerate}[(a)]
\item The 198 dynamically nonequivalent networks in \Cref{tab:A-H} fall into 157 diagonally nonequivalent classes \cite{boros_github}.

\item All the 198 networks in \Cref{tab:A-H} are atoms of Andronov--Hopf bifurcation (see \Cref{subsec:inheritance}). In fact, these are the \emph{smallest} planar, quadratic, trimolecular mass-action networks that admit an Andronov--Hopf bifurcation in the sense that no such network with only three reactions exists \cite[Theorem 4]{banaji:boros:hofbauer:2024a}. However, we note that the same theorem also implies that there exists a (unique) quadratic, trimolecular mass-action network with {\em three species} and three reactions that admits a vertical Andronov--Hopf bifurcation.

\item\label{item:bautin} The single network for which $L_2\neq0$ holds whenever $L_1=0$ is
\begin{center}
\begin{tabular}{*{4}{r@{\hspace{4pt}$\to$\hspace{4pt}}l}}
$2\X$ & $3\X$ & $\X+\Y$ & $3\X$ & $\X$ & $\0$ & $\0$ & $\X+2\Y$ \\
\end{tabular}
\end{center}
This network displays a Bautin bifurcation \cite[Section 8.3]{kuznetsov:2023}. In our case, $L_2>0$ at the bifurcation, and consequently the equilibrium is repelling when $L_1=0$. By varying the rate constants, two small limit cycles are born (a repelling equilibrium is surrounded by an attracting and a repelling limit cycle). For nearby rate constants, a fold bifurcation of limit cycles also occurs.

\item\label{item:frank-kamenetsky-salnikov} The quadratic, trimolecular $(2,5,2)$ network

was studied by Frank-Kamenetsky and Salnikov \cite{frank-kamenetsky:salnikov:1943}. They showed that the network admits an Andronov--Hopf bifurcation, which is always supercritical. In fact, this network is obtained from a network in Group 3 in \Cref{tab:A-H} by adding the reaction $\X \to 2\X$, i.e., by the application of enlargement E1. Hence, the supercritical Andronov--Hopf bifurcation observed by Frank-Kamenetsky and Salnikov is itself inherited from the smaller network -- see \Cref{lem:inherit}. We return to both of these networks later and observe that both, in fact, admit Bogdanov--Takens bifurcations.

\newpage
\item For the 20 networks that admit a vertical Andronov--Hopf bifurcation, we also prove directly that they indeed admit a center, see \Cref{lem:A-H_vertical} in \Cref{sec:A-H_vertical}. Here we remark that, apart from these 20 networks, there are further quadratic, trimolecular $(2,4,2)$ networks that admit a center. For example, the two networks
\begin{center}
\begin{tabular}{*{4}{r@{\hspace{4pt}$\to$\hspace{4pt}}l}}
\arrayrulecolor{mygray}
$\X$ & $2\X$ & $\X$ & $\0$ & $\X+\Y$ & $2\Y$ & $\Y$ & $\0$ \\
\hline
\multicolumn{8}{c}{} \\[-10pt]
$\X$ & $2\X$ & $\X+\Y$ & $\X+2\Y$& $\X+\Y$ & $\0$ & $\Y$ & $\0$ \\
\end{tabular}
\end{center}
lead to the classical Lotka--Volterra differential equation, and whenever a positive equilibrium exists, it is a global center in the positive quadrant. These networks have neither of the reactions $2\X \to 3\X$ or $2\Y \to 3\Y$, and thus the eigenvalues cannot cross the imaginary axis while the rate constants are varied, ruling out an Andronov--Hopf bifurcation. Considering only networks without redundant reactions (see \Cref{sec:isomorph}), there are 21 dynamically nonequivalent, quadratic, trimolecular $(2,4,2)$ networks with the properties described above. For the complete list, see \cite{boros_github}. 

\item Out of the 198 networks in \Cref{tab:A-H}, 104 have only three distinct reactant complexes. These 104 networks admit only supercritical or vertical Andronov--Hopf bifurcation. This is consistent with the fact that a subcritical Andronov--Hopf bifurcation is forbidden for any planar, quadratic network with three reactions, see \cite[Theorem 3]{banaji:boros:hofbauer:2024a}. The 104 networks here are related to the networks in that theorem as follows: the 3 distinct reactant complexes are
\begin{itemize}
    \item $2\X$, $\X+\Y$, $\X$ in 5 of the 104 networks and the Andronov--Hopf bifurcation is vertical, cf.\ Case 8;
    \item $2\X$, $\X+\Y$, $\Y$ in 89 of the 104 networks and the Andronov--Hopf bifurcation is supercritical, cf.\ Case 9;
    \item $2\X$, $\X+\Y$, $\0$ in 10 of the 104 networks and the Andronov--Hopf bifurcation is supercritical, cf.\ Case 10.
\end{itemize}

\item We remark that a large fraction of the 198 networks in \Cref{tab:A-H} are obtained from the generalised LVA network \eqref{eq:LVA} by enlargement E1: indeed, 50 networks in Group 3 are such (48 admitting a supercritical Andronov--Hopf bifurcation, while 2 admitting a vertical Andronov--Hopf bifurcation). Though the generalised LVA network does not admit a periodic orbit (it has a unique positive equilibrium that is a global repellor), several of its modifications apparently do admit an Andronov--Hopf bifurcation, including the Frank-Kamenetsky--Salnikov network discussed in \eqref{item:frank-kamenetsky-salnikov} above.

\end{enumerate}
\end{remark}
\section{Bogdanov--Takens bifurcation}
\label{sec:B-T}

Consider a two-parameter family of planar differential equations. An equilibrium of such a system whose Jacobian matrix is nilpotent with index two undergoes a Bogdanov--Takens bifurcation, provided some nondegeneracy and transversality conditions are fulfilled \cite[Section 8.4]{kuznetsov:2023}. In a neighbourhood of the critical parameter value of this codimension-two bifurcation, one finds the following codimension-one bifurcations: Andronov--Hopf, homoclinic, and fold bifurcations. Consequently, a limit cycle, a homoclinic orbit, and multiple isolated equilibria are all possible in such a family of differential equations.

Our main goal in this section is to find two-species, quadratic, trimolecular mass-action networks that exhibit a Bogdanov--Takens bifurcation. By \Cref{thm:nplus2sources}, any two-species, rank-two mass-action network with multiple positive nondegenerate equilibria has at least four distinct reactant complexes, and hence, at least four reactions. Therefore we will search for Bogdanov--Takens bifurcation in networks in quadratic, trimolecular $(2,4,2)$ networks. However, we remark that if we allow more than two species, a nondegenerate Bogdanov--Takens bifurcation occurs already in three-reaction networks, see for example
\begin{center}
\begin{tabular}{*{3}{r@{\hspace{4pt}$\to$\hspace{4pt}}l}l}
\arrayrulecolor{mygray}
$2\X$ & $4\X+3\Y+\Z$ & $\X+\Y$ & $\msf{0}$ & $\Z$ & $\X$ & (quadratic, octomolecular)\\
\hline
\multicolumn{7}{c}{} \\[-10pt]
$2\X$ & $3\X$ & $\X+\Y+\Z$ & $2\Y$ & $\Y$ & $\Z$ & (trimolecular) \\
\end{tabular}
\end{center}
The analysis of these two networks can be found at \cite{boros_github}.

From now on, we focus on finding \emph{all} quadratic, trimolecular $(2,4,2)$ mass-action networks that admit a Bogdanov--Takens bifurcation. The enumeration of the set of networks appearing in the following lemma is performed in \cite{boros_github}.

\begin{lemma} \label{lem:B-T-candidates}
The 831 networks that admit a nondegenerate fold bifurcation enumerated in \Cref{thm:834_quadratic_trimolecular_fold} and the 198 networks that admit an Andronov--Hopf bifurcation enumerated in \Cref{thm:A-H} have 40 networks in common. Out of these 40 networks, 33 admit a positive equilibrium with a double zero eigenvalue, see \Cref{tab:B-T}.
\end{lemma}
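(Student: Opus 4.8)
The plan is to prove the statement by a finite, largely computational verification, of the kind recorded in \cite{boros_github}. Both sets in the statement are available explicitly: the $831$ networks are precisely those produced by the enumeration behind \Cref{thm:834_quadratic_trimolecular_fold}, and the $198$ by the enumeration behind \Cref{thm:A-H}, and both lists already sit inside the common ambient class of dynamically nontrivial, quadratic, trimolecular $(2,4,2)$ networks with four distinct reactant complexes, reduced modulo dynamical equivalence. So the first step is purely combinatorial: bring every network of both lists into the fixed canonical form for dynamical equivalence used in the enumerations, and intersect the two resulting lists of canonical forms. This is expected to yield the claimed $40$ networks; as a consistency check, every network in the intersection should carry the structural features forced on both parent families, in particular the reactant complex $\X+\Y$ and one of the reactions $2\X\to3\X$, $2\Y\to3\Y$ demanded on the Andronov--Hopf side by \Cref{lem:mixed_source} and \Cref{lem:2Xto3X}.

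The second step is to decide, network by network over these $40$ candidates, whether a positive equilibrium with a double zero eigenvalue exists. Each of the $40$ lies in the $831$, hence admits a positive nondegenerate equilibrium, so by \Cref{lem:degen} (together with the fact that $[A\,|\,\bm{1}]$ has only three columns) we have $\mathrm{rank}\,[A\,|\,\bm{1}]=3$, and the recoordinatisation of \Cref{sec:coords} applies, reducing each network to a two-parameter family in which a single inner parameter controls the equilibrium set; moreover, at a positive equilibrium the Jacobian matrix has the form $J=\lambda\,\Gamma D_{h(\alpha)}AD_{1/x}$ from \eqref{eq:MAJac}. A double zero eigenvalue of the $2\times2$ matrix $J$ is exactly the pair of conditions $\tr J=0$ and $\det J=0$. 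Adjoining these to the equilibrium (solvability) condition \eqref{eq:solvability} and clearing denominators gives, for each network, a small polynomial system in $x$, $\lambda$ and the inner parameter; the plan is to eliminate variables and reduce it to a one-variable question of whether a certain polynomial has a root in $\mathbb{R}_+$. For $33$ of the $40$ networks this is answered affirmatively by an explicit positive solution --- these are the networks collected in \Cref{tab:B-T} --- and for the remaining $7$ the eliminated polynomial turns out to be sign-definite on the positive axis, so no such equilibrium exists.

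The hard part, and really the only non-mechanical part, is closing out the seven negative cases: a numerical search does not suffice, so for each of these one must give an honest positivity argument for the reduced one-variable polynomial produced by the elimination. The positive cases are comparatively easy, being certified by explicit rate constants and equilibria. I do not anticipate a conceptual obstacle; the reason the argument stays tractable is precisely that it is confined to the $40$ intersection networks, and this confinement is legitimate here because the lemma asserts only what happens \emph{within} this intersection. The stronger claim that the resulting $33$ networks (minus the two degenerate ones) are exactly the $(2,4,2)$ quadratic, trimolecular networks exhibiting a nondegenerate Bogdanov--Takens bifurcation requires, in addition, verifying the Bogdanov--Takens nondegeneracy and transversality conditions, and is taken up in the remainder of \Cref{sec:B-T}; for use there, it is also worth recording during the second step that in each of the $33$ positive cases $J\neq0$ at the double-zero equilibrium, so that $J$ is genuinely nilpotent of index two.
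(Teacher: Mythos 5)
Your proposal is correct and takes essentially the same route as the paper: the lemma is established by a finite computational verification (delegated in the paper to the Mathematica notebook \cite{boros_github}), namely intersecting the two canonicalised lists to obtain the $40$ networks and then checking, network by network, whether the equilibrium conditions together with $\tr J=\det J=0$ admit a positive solution. Your additional observations (the structural consistency check via \Cref{lem:mixed_source} and \Cref{lem:2Xto3X}, and recording $J\neq0$ for the later Bogdanov--Takens analysis) are sound but do not change the method.
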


\begin{table}[h!t]
\centering
{\footnotesize
\begin{tabular}{cr|*{4}{r@{\hspace{4pt}$\to$\hspace{4pt}}l}}

\multirow{8}{*}{\mygreen{supercritical B--T}}
& 1 & $2\X$ & $3\X$ & $\X+\Y$ & $2\Y$ & $\Y$ & $\0$ & $\0$ & $\Y$ \\
& 2 & $2\X$ & $3\X$ & $\X+\Y$ & $3\Y$ & $\Y$ & $\0$ & $\0$ & $\Y$ \\
& 3 & $2\X$ & $3\X$ & $\X+\Y$ & $2\Y$ & $\Y$ & $\0$ & $\X$ & $\Y$ \\
& 4 & $2\X$ & $3\X$ & $\X+\Y$ & $3\Y$ & $\Y$ & $\0$ & $\X$ & $\Y$ \\
& 5 & $2\X$ & $3\X$ & $\X+\Y$ & $2\Y$ & $\Y$ & $\0$ & $\X$ & $2\Y$ \\
& 6 & $2\X$ & $3\X$ & $\X+\Y$ & $3\Y$ & $\Y$ & $\0$ & $\X$ & $2\Y$ \\
& 7 & $2\X$ & $3\X$ & $\X+\Y$ & $2\Y$ & $\Y$ & $\0$ & $\X$ & $3\Y$ \\
& 8 & $2\X$ & $3\X$ & $\X+\Y$ & $3\Y$ & $\Y$ & $\0$ & $\X$ & $3\Y$ \\
&&\multicolumn{8}{c}{}\\[-8pt]
\hline
&&\multicolumn{8}{c}{}\\[-8pt]
\multirow{2}{*}{\blue{vertical B--T}}
&  9 & $2\X$ & $3\X$ & $\X+\Y$ & $2\X$ & $\0$ & $\Y$ & $\X$ & $\0$ \\
& 10 & $2\X$ & $3\X$ & $\X+\Y$ & $3\X$ & $\0$ & $\Y$ & $\X$ & $\0$\\
&&\multicolumn{8}{c}{}\\[-8pt]
\hline
&&\multicolumn{8}{c}{}\\[-8pt]
\multirow{23}{*}{\orange{subcritical B--T}}
& 11 & $2\X$ & $3\X$ & $\X+\Y$ & $2\X$ & $\0$ &  $\X+2\Y$ & $\X$ & $\0$ \\
& 12 & $2\X$ & $3\X$ & $\X+\Y$ & $3\X$ & $\0$ &  $\X+2\Y$ & $\X$ & $\0$ \\
& 13 & $2\X$ & $3\X$ & $\X+\Y$ & $2\X$ & $\0$ &  $\X+ \Y$ & $\X$ & $\0$ \\
& 14 & $2\X$ & $3\X$ & $\X+\Y$ & $3\X$ & $\0$ &  $\X+ \Y$ & $\X$ & $\0$ \\
& 15 & $2\X$ & $3\X$ & $\X+\Y$ & $2\X$ & $\0$ & $2\X+ \Y$ & $\X$ & $\0$ \\
& 16 & $2\X$ & $3\X$ & $\X+\Y$ & $3\X$ & $\0$ & $2\X+ \Y$ & $\X$ & $\0$ \\

& 17 & $2\X$ & $3\X$ & $\X+\Y$ & $\X$ & $\Y$ & $\X+2\Y$ & $\X$ &  $\Y$ \\
& 18 & $2\X$ & $3\X$ & $\X+\Y$ & $\X$ & $\Y$ & $\X+2\Y$ & $\X$ & $2\Y$ \\
& 19 & $2\X$ & $3\X$ & $\X+\Y$ & $\X$ & $\Y$ & $\X+2\Y$ & $\X$ & $3\Y$ \\

& 20 & $2\X$ & $3\X$ & $\X+\Y$ & $\0$ & $\Y$ & $2\X$ & $\X$ & $2\Y$ \\
& 21 & $2\X$ & $3\X$ & $\X+\Y$ & $\0$ & $\Y$ & $3\X$ & $\X$ & $2\Y$ \\
& 22 & $2\X$ & $3\X$ & $\X+\Y$ & $\0$ & $\Y$ &  $\X$ & $\X$ & $3\Y$ \\
& 23 & $2\X$ & $3\X$ & $\X+\Y$ & $\0$ & $\Y$ & $2\X$ & $\X$ & $3\Y$ \\
& 24 & $2\X$ & $3\X$ & $\X+\Y$ & $\0$ & $\Y$ & $3\X$ & $\X$ & $3\Y$ \\
& 25 & $2\X$ & $3\X$ & $\X+\Y$ & $\0$ & $\Y$ &  $\X$ & $\X$ & $\X+\Y$ \\
& 26 & $2\X$ & $3\X$ & $\X+\Y$ & $\0$ & $\Y$ & $2\X$ & $\X$ & $\X+\Y$ \\
& 27 & $2\X$ & $3\X$ & $\X+\Y$ & $\0$ & $\Y$ & $3\X$ & $\X$ & $\X+\Y$ \\

& 28 & $2\X$ & $3\X$ & $\X+\Y$ &  $\Y$ & $2\Y$ & $\0$     & $\X$ & $2\X+\Y$ \\
& 29 & $2\X$ & $3\X$ & $\X+\Y$ &  $\Y$ & $2\Y$ & $\X$     & $\X$ & $2\X+\Y$ \\
& 30 & $2\X$ & $3\X$ & $\X+\Y$ &  $\Y$ & $2\Y$ & $2\X$    & $\X$ & $2\X+\Y$ \\
& 31 & $2\X$ & $3\X$ & $\X+\Y$ &  $\Y$ & $2\Y$ & $3\X$    & $\X$ & $2\X+\Y$ \\
& 32 & $2\X$ & $3\X$ & $\X+\Y$ &  $\Y$ & $2\Y$ & $2\X+\Y$ & $\X$ & $2\X+\Y$ \\

& 33 & $2\X$ & $3\X$ & $\X+\Y$ & $2\Y$ & $2\Y$ & $\0$     & $\0$ & $\X+2\Y$ \\
\end{tabular}
}
\caption{The 33 dynamically nonequivalent, quadratic, trimolecular $(2,4,2)$ mass-action networks that admit a Bogdanov--Takens (B--T) bifurcation.}
\label{tab:B-T}
\end{table}

At the end of this section (in \Cref{rem:B-T} \eqref{item:hopf_fold_but_no_B-T}), we comment on the 7 mass-action networks that admit both a fold and an Andronov--Hopf bifurcation but not a Bogdanov--Takens bifurcation. The 33 networks in \Cref{lem:B-T-candidates} are the candidates for Bogdanov--Takens bifurcation among the quadratic, trimolecular $(2,4,2)$ networks. We arrived at the same 33 networks in \Cref{thm:834_quadratic_trimolecular_fold} by requiring a positive equilibrium with one eigenvalue which changes sign, while the other is zero; it so happens that all of these networks also admit a pair of purely imaginary eigenvalues.

%From now on, our focus is on finding all quadratic, trimolecular $(2,4,2)$ mass-action networks that admit a Bogdanov--Takens bifurcation. By the discussion in the preceding paragraph and \Cref{lem:2Xto3X,lem:mixed_source}, such a network has four distinct reactant complexes, one of the reactant complexes is $\X+\Y$, and contains at least one of the reactions $2\X \to 3\X$ and $2\Y \to 3\Y$.

%\begin{lemma}\label{lem:433}
%Up to isomorphism, there are 5897 dynamically nonequivalent, dynamically nontrivial, quadratic, trimolecular $(2,4,2)$ networks with four distinct reactant complexes. Out of these 5897 networks, 5864 admit a positive nondegenerate equilibrium. Finally, out of these 5864 networks, 433 have the reactant complex $\X+\Y$ and have the reaction $2\X \to 3\X$ or $2\Y \to 3\Y$.
%\end{lemma}

%\begin{lemma}\label{lem:33}
%Out of the 433 networks in \Cref{lem:433}, 33 admit a positive equilibrium with a double zero eigenvalue and also admit a positive equilibrium with a pair of purely imaginary eigenvalues, see \Cref{tab:B-T}.
%\end{lemma}

%By \Cref{lem:B-T-candidates}, there are 33 candidate networks for admitting a Bogdanov--Takens bifurcation. 
For the 33 networks in \Cref{lem:B-T-candidates} (or \Cref{tab:B-T}), a nondegenerate Bogdanov--Takens bifurcation occurs and is unfolded transversely by the rate constants whenever the conditions (BT.0), (BT.1), (BT.2) and (BT.3) in \cite[Theorem 8.4]{kuznetsov:2023} are fulfilled. Condition (BT.0) is the basic assumption that the equilibrium, at the critical parameter value, has a nonzero Jacobian matrix with a double zero eigenvalue with geometric multiplicity 1, i.e., the Jacobian matrix is nilpotent with index two. 
Then after a suitable linear change of variables, the Taylor expansion of the vector field  can be written as
\begin{align*}
    \begin{split}
        \dot{x} &= y + \frac12 a_{20}x^2 + \cdots, \\
        \dot{y} &=  \frac12 b_{20} x^2 + b_{11} xy+ \cdots.
    \end{split}
\end{align*}
The nondegeneracy conditions (BT.1) and (BT.2) are $a_{20} + b_{11} \neq 0$ and $b_{20} \neq 0$, respectively. If both (BT.1) and (BT.2) hold, the sign of $(a_{20} + b_{11})b_{20}$, denoted by $\sigma$, determines the (truncated) normal form of the bifurcation. If the transversality condition (BT.3) is also satisfied then the local dynamics is fully captured by the truncated normal form 
\begin{align*}
    \begin{split}
        \dot{x} &= y, \\
        \dot{y} &= \beta_1 + \beta_2 x + x^2 + \sigma xy
    \end{split}
\end{align*}
(with $\sigma = \pm 1$ and two small parameters $\beta_1$ and $\beta_2$). The bifurcation diagram in the case of $\sigma=-1$ is shown in \cite[Fig.\ 8.8]{kuznetsov:2023}; 
the interesting features include a supercritical Andronov--Hopf bifurcation, 
resulting in a stable limit cycle which grows into a homoclinic
orbit (which is attracting from the inside) and then disappears.
On the other hand, if $\sigma = +1$ then the Andronov--Hopf bifurcation is subcritical, the limit cycle is unstable, and the homoclinic orbit is repelling. With some abuse of terminology, we call the Bogdanov--Takens bifurcation {\em supercritical} if $\sigma = -1$, and {\em subcritical} if 
$\sigma = +1$.

Condition (BT.0) is easily verified for each of the 33 networks. Checking conditions (BT.1) and (BT.2) requires more effort. It turns out that 8 networks satisfy them with $\sigma=-1$ (leading to the supercritical case; networks 1--8 in \Cref{tab:B-T}), while another 23 networks also satisfy both conditions but with $\sigma=+1$ (leading to the subcritical case; networks 11--33 in \Cref{tab:B-T}). For all of these 31 networks, the transversality condition (BT.3) is also fulfilled, and hence, they all admit a nondegenerate Bogdanov--Takens bifurcation that is unfolded transversely by the rate constants. For networks 9 and 10 in \Cref{tab:B-T}, condition (BT.1) is violated: $a_{20} + b_{11} = 0$. In general, when one (or both) of (BT.1) and (BT.2) are violated, the Bogdanov--Takens bifurcation is degenerate, and this might lead to a complicated higher codimension bifurcation which is so far only partially understood. Note, however, that by \Cref{thm:bifcodim} below, bifurcations of codimension three or higher cannot be unfolded by the rate constants in a $(2,4,2)$ network. Indeed in our 2 networks, we encounter only the simple special case when the Andronov--Hopf bifurcation is vertical and occurs simultaneously with the homoclinic bifurcation. In this case, the region bounded by the homoclinic orbit is filled with periodic orbits and the equilibrium is a center, see \Cref{fig:vertBT}. With some abuse of terminology, we call this a {\em vertical Bogdanov--Takens bifurcation}.

We summarise our main findings in the following theorem; the calculations are presented in \cite{boros_github}. %\textcolor{red}{[Here, we are using Bogdanov--Takens to include the degenerate case; we need to be careful to be consistent - i.e., to make clear that we are not taking B--T to mean all four of Kuznetsov's conditions.]}
\begin{theorem}
\label{thm:BTmain}
There are 33 dynamically nonequivalent, quadratic, trimolecular $(2,4,2)$ mass-action networks which admit a Bogdanov--Takens bifurcation (see \Cref{tab:B-T}). The bifurcation is supercritical for networks 1--8, vertical for networks 9 and 10, and subcritical for networks 11--33. 
\end{theorem}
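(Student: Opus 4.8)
The plan is to start from the 33 candidate networks produced by \Cref{lem:B-T-candidates} and recorded in \Cref{tab:B-T} (these are dynamically nonequivalent, being a subset of the networks enumerated in \Cref{lem:quadratic_trimolecular_fold_base_set}), and to verify for each of them the hypotheses of the Bogdanov--Takens theorem \cite[Theorem 8.4]{kuznetsov:2023}. Each such network is a $(2,4,2)$ network, so by the construction in \Cref{sec:coords} it gives rise, after the recoordinatisation $y=\kappa^G\circ x$ and a time-rescaling, to an effectively two-parameter family of planar quadratic ODEs, with only the single inner parameter affecting the equilibrium set. For a fixed network I would first locate the rate constants at which a positive equilibrium exists whose Jacobian matrix is nonzero and nilpotent of index two; this is condition (BT.0), and it is straightforward to check since, by \eqref{eq:MAJac}, the Jacobian at a positive equilibrium is $\lambda\Gamma D_{h(\alpha)}AD_{1/x}$, so one only needs $\tr J=0$, $\det J=0$, and $J\neq 0$ to hold simultaneously.

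Next, for each network I would apply the explicit linear change of variables that puts the nilpotent linear part into Jordan form and brings the quadratic terms into the shape
\[
\dot x = y + \tfrac12 a_{20}x^2 + \cdots,\qquad \dot y = \tfrac12 b_{20}x^2 + b_{11}xy + \cdots,
\]
and read off $a_{20}$, $b_{11}$, $b_{20}$ as functions of the remaining essential parameter. The nondegeneracy conditions (BT.1): $a_{20}+b_{11}\neq0$ and (BT.2): $b_{20}\neq0$ are then checked directly; when both hold, $\sigma:=\sgn\big((a_{20}+b_{11})b_{20}\big)$ is computed, and one expects to find $\sigma=-1$ (supercritical) for networks 1--8 and $\sigma=+1$ (subcritical) for networks 11--33. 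Finally the transversality condition (BT.3) is verified: since a $(2,4,2)$ network carries an effectively two-parameter family of vector fields (\Cref{thm:bifcodim}), it suffices that the map from the two essential rate-constant parameters to the Bogdanov--Takens unfolding parameters $(\beta_1,\beta_2)$ have nonzero Jacobian at the critical value. All of this amounts to a finite collection of symbolic computations, carried out network by network in \cite{boros_github}.

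The two exceptional networks are numbers 9 and 10 in \Cref{tab:B-T}, for which the computation above returns $a_{20}+b_{11}=0$, so that (BT.1) fails and the generic Bogdanov--Takens normal form is not directly applicable. Here I would argue that the degeneracy is of the mildest kind: on the relevant branch the Andronov--Hopf bifurcation is vertical --- the equilibrium is a center, as one shows with the same technique used for \Cref{lem:A-H_vertical} --- and it occurs simultaneously with the homoclinic bifurcation, so the region enclosed by the homoclinic loop is filled with periodic orbits (\Cref{fig:vertBT}); we call this a vertical Bogdanov--Takens bifurcation. Crucially, \Cref{thm:bifcodim} forbids any codimension-three phenomenon from being unfolded by the rate constants in a $(2,4,2)$ network, which a priori rules out the more intricate degenerate Bogdanov--Takens scenarios. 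The main obstacle is twofold: the volume and delicacy of the symbolic algebra needed to bring each of the 33 vector fields into Bogdanov--Takens normal form and to track the sign of $(a_{20}+b_{11})b_{20}$ over the admissible parameter range; and the separate, more geometric argument for networks 9 and 10 establishing that the coincident Andronov--Hopf and homoclinic bifurcations genuinely produce a center surrounded by a full annulus of closed orbits rather than some subtler degeneracy.
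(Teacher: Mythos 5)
Your proposal follows essentially the same route as the paper: verify (BT.0)--(BT.3) of \cite[Theorem 8.4]{kuznetsov:2023} for each of the 33 candidates from \Cref{lem:B-T-candidates} via symbolic computation, read off $\sigma=\sgn((a_{20}+b_{11})b_{20})$ to separate the supercritical and subcritical cases, and treat networks 9 and 10 separately as the degenerate case where $a_{20}+b_{11}=0$, with \Cref{thm:bifcodim} excluding higher-codimension degeneracies and a Dulac/Hamiltonian argument (the same one used for networks 19 and 20 in \Cref{lem:A-H_vertical}) establishing the center and the period annulus inside the homoclinic loop. This is exactly the paper's argument, with the computations delegated to \cite{boros_github}.
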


For completeness, we present a brief analysis of network 9 in \Cref{tab:B-T}, one of the networks admitting a vertical Bogdanov--Takens bifurcation (the analysis of network 10 is similar; in fact, networks 9 and 10 are diagonally equivalent). The associated mass-action differential equation reads as
\begin{align}\label{eq:ode_B-T_vertical}
    \begin{split}
        \dot{x} &= \kappa_1 x^2 + \kappa_2 xy - \kappa_3 x, \\
        \dot{y} &= -\kappa_2 xy + \kappa_4.
    \end{split}
\end{align}
By a short calculation, there are $0$, $1$, or $2$ positive equilibria if  $4\kappa_1\kappa_4 - \kappa_3^2$ is positive, zero, or negative, respectively. Furthermore, after multiplication by the Dulac function $1/x$, the divergence of the vector field is $\kappa_1 - \kappa_2$, a constant. Hence, by the Bendixson--Dulac test, there is no periodic orbit or homoclinic orbit when $\kappa_1 \neq \kappa_2$. On the other hand, for $\kappa_1 = \kappa_2$ the system is Hamiltonian with the Hamiltonian function $H(x,y)=\kappa_1 x y+\frac{\kappa_1}{2}y^2-\kappa_3 y-\kappa_4 \log x$. Hence, when $4\kappa_1\kappa_4 < \kappa_3^2$ and $\kappa_1 = \kappa_2$, the positive equilibrium with the positive Jacobian determinant is a center (it undergoes a vertical Andronov--Hopf bifurcation) and there is a homoclinic orbit (on the level set of $H$ through the saddle), whose interior is filled with periodic orbits, see \Cref{fig:vertBT}. When fixing $\kappa_3$ and $\kappa_4$, and regarding \eqref{eq:ode_B-T_vertical} as a two-parameter family of differential equations with $(\kappa_1,\kappa_2)$ being varied in a small neighbourhood of $(\frac{\kappa_3^2}{4\kappa_4},\frac{\kappa_3^2}{4\kappa_4})$, the dynamic behaviour is similar to that of a mechanical system with (possibly negative) friction. In particular, a ``normal form'' or unfolding of a vertical Bogdanov--Takens bifurcation would be given by 
\begin{align*}
    \begin{split}
        \dot{x} &= y, \\
        \dot{y} &= \beta_1 + \beta_2 y + x^2 
    \end{split}
\end{align*}
(with  two small parameters $\beta_1$ and $\beta_2$).
\begin{figure}[ht]
\centering
\begin{tabular}{cc}
\includegraphics[scale=0.4]{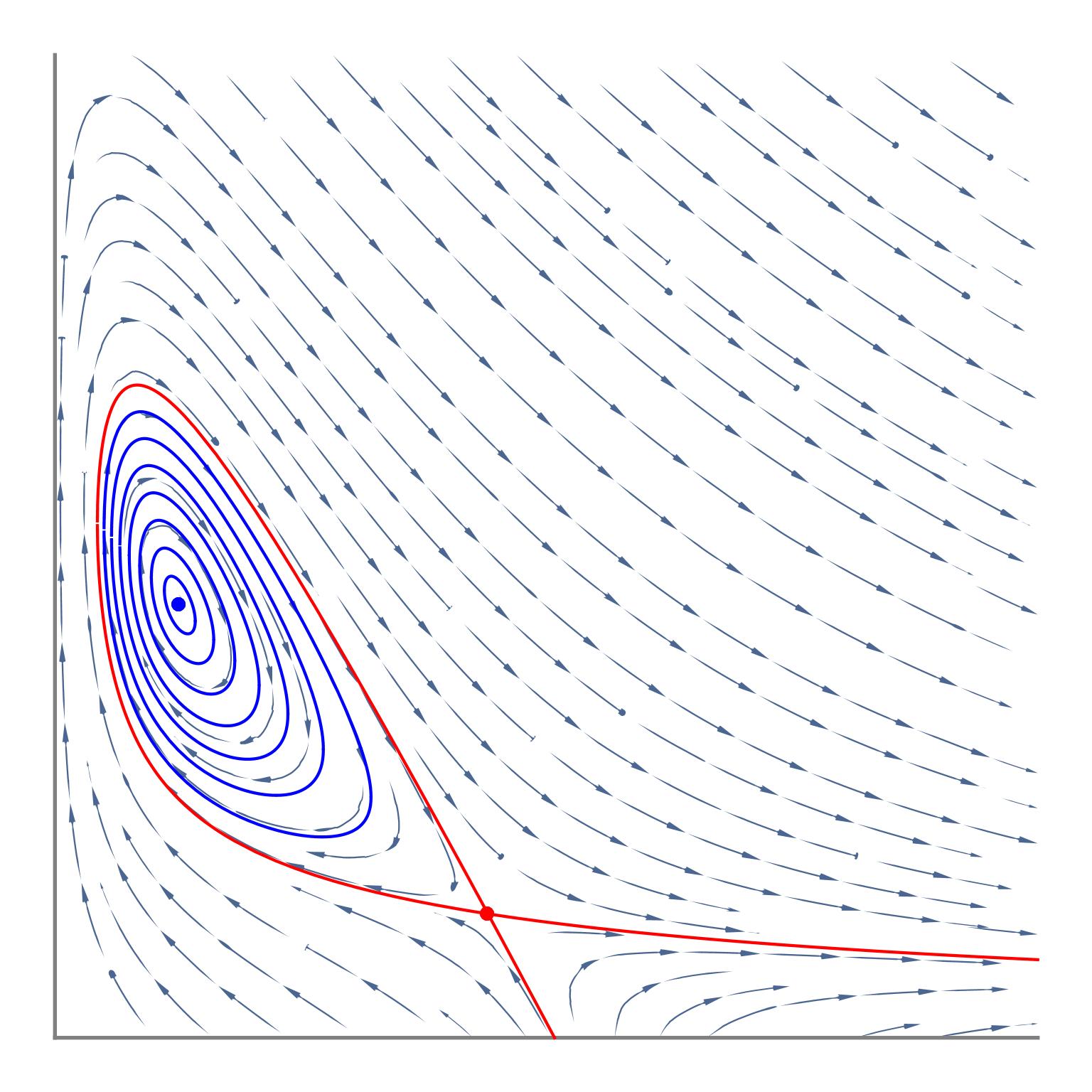} &
\includegraphics[scale=0.4]{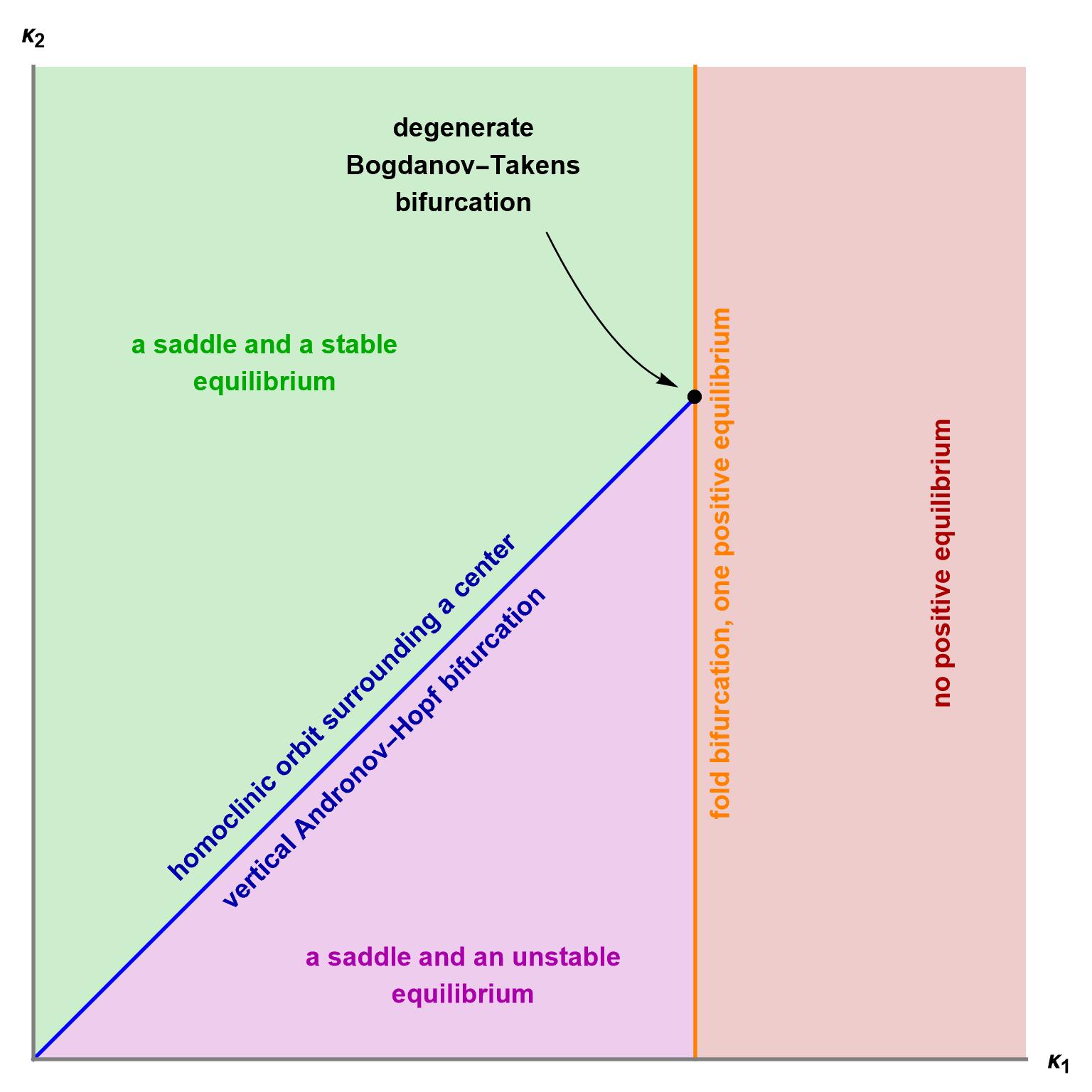}
\end{tabular}
\caption{The phase portrait of network 9 in \Cref{tab:B-T} with $4\kappa_1\kappa_4<\kappa_3^2$ and $\kappa_1=\kappa_2$ (left panel) and the bifurcation diagram with $\kappa_3, \kappa_4>0$ being fixed, while $\kappa_1, \kappa_2>0$ are parameters (right panel).}
\label{fig:vertBT}
\end{figure}

\begin{remark} \label{rem:B-T}
We conclude this section with several comments.
\begin{enumerate}[(a)]
\item The 33 networks in \Cref{tab:B-T} fall into 28 diagonally nonequivalent classes. In particular, the following five are diagonally equivalent pairs: (1,2), (3,6), (9,10), (11,14), (13,16).

\item Notice that, interestingly, each of the eight networks that lead to a supercritical Bogdanov--Takens bifurcation (networks 1--8 in \Cref{tab:B-T}) are obtained from the generalised LVA network \eqref{eq:LVA} by adding a fourth reaction. What is even more remarkable is that the Frank-Kamenetsky--Salnikov network (see \Cref{rem:A-H} \eqref{item:frank-kamenetsky-salnikov}) is obtained from network 1 in \Cref{tab:B-T} by adding the reaction $\X \to 2\X$, and thus the supercritical Bogdanov--Takens bifurcation in the smaller network is inherited by the larger network, see \Cref{lem:inherit}. It does not follow from \Cref{lem:inherit} that the Bogdanov--Takens bifurcation in the Frank-Kamenetsky--Salnikov network must be everywhere supercritical, but a direct calculation shows that this is the case.

\item As discussed in \Cref{subsec:fold_planar}, the quadratic, trimolecular $(2,4,2)$ networks in \Cref{rem:fold_quadratic} \eqref{item:fold_quadratic_2d_center_manifold} admit a positive equilibrium with a nilpotent Jacobian matrix of index two; however, a pair of purely imaginary eigenvalues is forbidden, and hence, no Andronov--Hopf bifurcation occurs. Consequently, a Bogdanov--Takens bifurcation cannot be unfolded transversely by the rate constants in these networks.

\item In \Cref{subsec:fold_planar} we identified fifteen $(2,4,2)$ networks (see \Cref{rem:fold_quadratic} \eqref{item:fold_quadratic_E6}), where the fold bifurcation is inherited from a $(1,3,1)$ network via enlargement E6 (see \Cref{lem:inherit}). Interestingly, two of the fifteen networks even admit a nondegenerate Bogdanov--Takens bifurcation: networks 13 and 14 in \Cref{tab:B-T} appear with $C=\X+\Y$ in the second and the third row in \Cref{rem:fold_quadratic} \eqref{item:fold_quadratic_E6}, respectively.

\item Recall from \Cref{sec:A-H} that there is a single quadratic, trimolecular $(2,4,2)$ network that admits a (subcritical) Bautin bifurcation. The same network happens to admit a (subcritical) Bogdanov--Takens bifurcation, see network 12 in \Cref{tab:B-T}. However, the two codimension-two bifurcations occur in separate parts of the parameter space. Nevertheless, this network is rather special, in that it admits an array of interesting behaviours including both multiple limit cycles and a homoclinic orbit. 

\item \label{item:hopf_fold_but_no_B-T} In \Cref{lem:B-T-candidates}, we have identified 7 networks (listed below) that admit both an Andronov--Hopf and a fold bifurcation, but not a double zero eigenvalue: the two bifurcation sets are separated, and hence, these networks do not exhibit a Bogdanov--Takens bifurcation. Interestingly, each network is obtained by supplementing the generalised LVA \eqref{eq:LVA} by a reaction with reactant complex $2\Y$ (they appear in Group 3 in \Cref{tab:A-H}). It is an open question whether homoclinic bifurcations can occur in these networks. Notice that these 7 networks fall into 5 diagonally nonequivalent classes: networks 1 \& 4 and networks 3 \& 5 are diagonally equivalent pairs.
\begin{center}
\begin{tabular}{r|*{4}{r@{\hspace{4pt}$\to$\hspace{4pt}}l}}
\arrayrulecolor{mygray}
1 & $2\X$ & $3\X$ & $\X+\Y$ & $2\Y$ & $\Y$ & $\0$ & $2\Y$ & $2\X$ \\
2 & $2\X$ & $3\X$ & $\X+\Y$ & $2\Y$ & $\Y$ & $\0$ & $2\Y$ & $3\X$ \\
3 & $2\X$ & $3\X$ & $\X+\Y$ & $2\Y$ & $\Y$ & $\0$ & $2\Y$ & $2\X+\Y$ \\
4 & $2\X$ & $3\X$ & $\X+\Y$ & $3\Y$ & $\Y$ & $\0$ & $2\Y$ & $\X$ \\
5 & $2\X$ & $3\X$ & $\X+\Y$ & $3\Y$ & $\Y$ & $\0$ & $2\Y$ & $2\X$ \\
6 & $2\X$ & $3\X$ & $\X+\Y$ & $3\Y$ & $\Y$ & $\0$ & $2\Y$ & $3\X$ \\
7 & $2\X$ & $3\X$ & $\X+\Y$ & $3\Y$ & $\Y$ & $\0$ & $2\Y$ & $2\X+\Y$ \\
\end{tabular}
\end{center}

\end{enumerate}
\end{remark}
%\purple{As we have seen, among the 198 networks that admit an Andronov--Hopf bifurcation, there are 33 that also admit a Bogdanov--Takens bifurcation (and hence, a fold bifurcation). However, among the 198 networks, additionally, there are 7 networks, where fold bifurcation occurs, see them below. For these 7 networks, the Andronov--Hopf and the fold bifurcation sets in parameter space are separated, and hence, they do not exhibit a Bogdanov--Takens bifurcation. Interestingly, each network is obtained by supplementing the (generalised) LVA by a reaction with reactant complex $2\Y$ (they appear in Group 3 in \Cref{tab:A-H}). It is an open question whether homoclinic bifurcations can occur in these networks.}

%\begin{figure}[ht]
%\centering
%\begin{tabular}{cc}
%\includegraphics[scale=0.4]{pictures/vertBT_streamplot.jpg} &
%\includegraphics[scale=0.4]{pictures/vertBT_bifdiagr.jpg}
%\end{tabular}
%\caption{The phase portrait of network 9 in \Cref{tab:B-T} with $\kappa_1=\kappa_2$ (left panel) and the bifurcation diagram with $\kappa_3, \kappa_4>0$ being fixed, while $\kappa_1, \kappa_2>0$ are parameters (right panel).}
%\label{fig:vertBT}
%\end{figure}
\section{Other bifurcations of codimension 2 and higher}
\label{sec:other}

We have, so far, fully characterised fold, Andronov--Hopf, and Bogdanov--Takens bifurcations in planar, quadratic mass-action networks with up to four reactions, and molecularity three or less. We have also observed that a unique network, 

in this class, admits a Bautin bifurcation (see \Cref{rem:A-H} \eqref{item:bautin}).

We now argue that the bifurcations we have characterised so far are, indeed, the only possible generic bifurcations of positive equilibria which can occur in planar, quadratic mass-action networks with up to four reactions, and molecularity three or less. First, trivially, no planar network can admit a bifurcation which requires a center manifold of dimension three or higher, ruling out fold-Hopf and Hopf-Hopf bifurcations \cite[Sections 8.4 and 8.5]{kuznetsov:2023} in these networks. Second, in a $(2,4,2)$ network, bifurcations of codimension greater than $2$ are ruled out by the following theorem. 
\begin{theorem}
    \label{thm:bifcodim}
An $(n,m,n)$ mass-action network admits bifurcations of codimension at most $m-2$ in the sense that no bifurcation of codimension greater than $m-2$ can be unfolded by the rate constants of the network.
\end{theorem}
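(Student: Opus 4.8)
The plan is to read the statement off the natural-coordinate construction of \Cref{sec:coords}. First I would dispose of the degenerate cases. If $\mathrm{rank}\,[A\,|\,\bm 1]\le n$, then by \Cref{lem:degen} and the remark following it the network admits no positive nondegenerate equilibrium, and positive equilibria occur only for an exceptional, lower-dimensional set of rate constants; hence no bifurcation of a positive equilibrium is unfolded by the rate constants, and there is nothing to prove. (If $m\le n$, the rank-$n$ stoichiometric matrix is left-invertible, $\ker_{+}\Gamma=\varnothing$, and there are no positive equilibria at all.) Thus the substantive case is $m\ge n+1$ together with $\mathrm{rank}\,[A\,|\,\bm 1]=n+1$ --- precisely the hypotheses (i)--(ii) under which the recoordinatisation of \Cref{sec:coords} is available.

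In that case the construction already does most of the work: after the parameter-dependent change of coordinates $y=\kappa^{G}\circ x$ the system becomes $\dot y=\kappa^{G+\bm 1 v}\circ\Gamma\bigl((\kappa^{W})^{U}\circ y^{A}\bigr)$, so it depends on $\kappa$ only through the $m-n-1$ inner parameters $p:=\kappa^{W}$ and the $n$ outer parameters $q:=\kappa^{G+\bm 1 v}$; factoring out one outer parameter and rescaling time by it removes one further degree of freedom, leaving an $(m-2)$-parameter family $\tfrac{dy}{d\tau}=\tilde q\circ\Gamma(p^{U}\circ y^{A})$ with $(p,\tilde q)\in\mathbb R^{m-n-1}_{+}\times\mathbb R^{n-1}_{+}$. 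Two points then need to be pinned down. First, the exponent map $\ln\kappa\mapsto(\ln p,\ln\tilde q)$ is a linear \emph{surjection} $\mathbb R^{m}\to\mathbb R^{m-2}$: since $\overline G=[A\,|\,\bm 1\,|\,U]^{-1}$ is invertible, the rows of $W$ together with the $n$ rows of $G+\bm 1 v$ span an $(m-1)$-dimensional space, and replacing those $n$ rows by the $n-1$ coordinates of $\ln\tilde q$ (the pairwise differences of the outer logarithms) lowers the rank by exactly one; so $\kappa\mapsto(p,\tilde q)$ is a submersion $\mathbb R^{m}_{+}\to\mathbb R^{m-2}_{+}$. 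Second, the passage from $f(\cdot,\kappa)$ to $\tilde f(\cdot,(p,\tilde q))$ consists of a smooth parameter-dependent diffeomorphism of $\mathbb R^{n}_{\ge 0}$ followed by multiplication of the vector field by a positive function; both operations preserve equilibria (interior or boundary), the signs of their eigenvalues and crossings of the imaginary axis, periodic and homoclinic orbits, and hence the codimension of any local bifurcation of equilibria together with the transversality of its unfoldings.

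Combining these yields the conclusion. Suppose the rate constants transversely unfold, at some $\kappa_{0}$, a bifurcation of codimension $c$. Because $\kappa\mapsto(p,\tilde q)$ is a submersion, restrict this unfolding to a section $S$ through $\kappa_{0}$ mapped diffeomorphically onto a neighbourhood of $(p_{0},\tilde q_{0})$; after the equivalence of the previous paragraph, the restricted family is a transverse unfolding of the same bifurcation by only $m-2$ parameters. Since a transverse unfolding of a codimension-$c$ bifurcation requires at least $c$ parameters, $c\le m-2$, which is the claim.

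The step I expect to be the genuine obstacle is the careful bookkeeping behind the last two paragraphs: making precise, in the framework of \cite[Chapter~8]{kuznetsov:2023}, that a parameter-dependent smooth change of coordinates composed with a positive time-rescaling leaves both the codimension of a bifurcation and the transversality of an unfolding unchanged, and that restricting a transverse unfolding to a section complementary to the fibres of a submersion stays transverse. The surjectivity of the exponent map and the treatment of the degenerate cases via \Cref{lem:degen} are, by contrast, routine.
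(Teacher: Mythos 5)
Your proposal is correct and takes essentially the same route as the paper: the paper's proof of \Cref{thm:bifcodim} is a one-line appeal to the recoordinatisation of \Cref{sec:coords}, which exhibits the system as an $(m-2)$-parameter family after rescaling time. Your write-up merely makes explicit the details the paper leaves implicit --- the degenerate cases handled via \Cref{lem:degen}, the surjectivity of the map $\kappa\mapsto(p,\tilde q)$, and the invariance of bifurcation codimension and transversality under the parameter-dependent recoordinatisation and time rescaling.
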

\begin{proof}
This follows immediately when we apply the recoordinatisation discussed in \Cref{sec:coords} to such a network, to obtain a family of ODEs with $m-2$ parameters.
\end{proof}
%\textcolor{red}{[Here we are saying ``are ruled out'' to mean cannot be unfolded by the rate constants. I'm not sure this is consistent with our use of terminology elsewhere.]}

In order to complete the bifurcation analysis of planar, quadratic, trimolecular mass-action networks with up to four reactions, we need to answer two questions: 
\begin{enumerate}
    \item Can cusp bifurcations occur in networks in this class?
    \item Can any bifurcations of codimension $3$ or higher occur in {\em rank-one} networks in this class? 
\end{enumerate}
The answer in both cases is no. We treat the question of rank-one networks first. Clearly the only possible bifurcations in rank-one networks are fold bifurcations, cusp bifurcations, and more degenerate fold-like bifurcations. However, as we saw earlier in the proof of \Cref{lem:quadratic_rank-one_fold}, in local coordinates on any stoichiometric class of a quadratic, rank-one network, we obtain a one-dimensional at-most-quadratic differential equation. Trivially, no more than two positive, nondegenerate equilibria are possible in such an ODE, ruling out cusp bifurcations and any other bifurcations leading to three or more nondegenerate equilibria. 

All that remains is to show that cusp bifurcations cannot occur amongst the {\em rank-two} networks examined so far. This follows from the following more general result which is again a consequence of viewing mass-action networks in the natural coordinates described in \Cref{sec:coords}.

\begin{theorem}
\label{thm:cusp}
An $(n,n+2,n)$ mass-action network forbids cusp bifurcations in the sense that no cusp bifurcation can be unfolded by the rate constants of the network. 
\end{theorem}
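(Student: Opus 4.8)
The plan is to run the argument through the natural coordinates of \Cref{sec:coords} --- equivalently, through \Cref{lem:biject} --- exploiting that for an $(n,n+2,n)$ network the set of positive equilibria, and which of them are degenerate, depends on the rate constants only through the single scalar $\kappa^W$. First one disposes of the case $\mathrm{rank}\,[A\,|\,\bm 1]\le n$: by \Cref{lem:degen} and the remark after it the network then carries positive equilibria only for an exceptional set of rate constants, all degenerate, whereas a cusp bifurcation unfolded by the rate constants would produce an open set of nearby rate constants carrying \emph{non}degenerate positive equilibria (for generic small $(\beta_1,\beta_2)$ the versal family $\dot\xi=\beta_1+\beta_2\xi\pm\xi^3$ has one or three hyperbolic equilibria). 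So one may assume $\mathrm{rank}\,[A\,|\,\bm 1]=n+1$; since $r=n$ and $m=n+2$, the solvability parameter runs over $Y\subseteq\mathbb{R}^1$, the cone $\mathrm{ker}_+\,\Gamma$ is two-dimensional, and $\theta:=\kappa^W$ is a single positive number.

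The heart of the argument is the parameter dependence of the equilibrium set. By \Cref{lem:biject}, for fixed rate constants the positive equilibria are in smooth bijection with the solutions $\alpha\in Y$ of $h(\alpha)^W=\theta$, so which equilibria exist, and how many, depends on the rate constants only through $\theta$. Moreover, by \eqref{eq:MAJac} the Jacobian at the positive equilibrium with solvability parameter $\alpha$ is $J=\lambda\,\Gamma D_{h(\alpha)}A D_{1/x}$, whence $\det J=\lambda^{n}\bigl(\prod_i x_i\bigr)^{-1}\det\bigl(\Gamma D_{h(\alpha)}A\bigr)$; since $\lambda>0$ and $x\in\mathbb{R}^n_+$, the equilibrium is degenerate exactly when $\det\bigl(\Gamma D_{h(\alpha)}A\bigr)=0$ --- a condition on $\alpha$ alone, hence (through $\theta=h(\alpha)^W$) a condition on $\theta$ alone. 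Consequently the fold bifurcation set of the network --- the locus of rate constants carrying a degenerate positive equilibrium, which contains the cusp bifurcation set --- is, locally near any of its points, precisely the smooth level hypersurface $\{\kappa^W=\theta_0\}$ (note that $\mathrm{d}(\kappa^W)\ne 0$ on $\mathbb{R}^m_+$, as $W$ is a nonzero row). The sole exception is when $\det\bigl(\Gamma D_{h(\alpha)}A\bigr)$ vanishes identically along an equilibrium branch, in which case every equilibrium on that branch is degenerate and we are back in the situation above.

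The contradiction then comes from the cusp transversality condition. Suppose a cusp bifurcation of a positive equilibrium is unfolded by the rate constants at some $\kappa_0$. By the cusp nondegeneracy and transversality conditions \cite{kuznetsov:2023}, the map $\Phi$ from the rate constants to the versal unfolding parameters $(\beta_1,\beta_2)$ of $\dot\xi=\beta_1+\beta_2\xi\pm\xi^3$ is a submersion at $\kappa_0$, and near $\kappa_0$ the network has a degenerate positive equilibrium exactly when $\Phi(\kappa)$ lies on the fold locus of the normal form, i.e.\ on the semicubical curve $C=\{27\beta_1^2=\mp4\beta_2^3\}$, which has a genuine cusp (it is not a smooth submanifold) at the origin. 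Thus the network's fold set near $\kappa_0$ equals $\Phi^{-1}(C)$; since $\Phi$ is a submersion, the submersion theorem makes $\Phi^{-1}(C)$, in suitable local coordinates, a product of $C$ with a Euclidean space, hence \emph{not} a smooth submanifold at $\kappa_0$ --- contradicting the previous paragraph, where the fold set was found to be a smooth hypersurface near each of its points. Hence no cusp bifurcation can be unfolded by the rate constants of an $(n,n+2,n)$ network.

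I expect the main obstacle to be the precise transversality bookkeeping of the last paragraph: identifying the network's fold set near $\kappa_0$ with $\Phi^{-1}(C)$ (essentially the meaning of ``transversely unfolded'', but it must be pinned down), and confirming that a product of the semicubical cusp with a linear space cannot be a smooth hypersurface. An equivalent, perhaps cleaner, packaging avoids the geometry entirely: the normal-form coefficients $(\beta_1,\beta_2)$ are recovered from the local configuration of positive equilibria of the network near the bifurcating equilibrium, which by the second paragraph depends on the rate constants only through $\theta$; therefore $\Phi$ factors through the scalar map $\kappa\mapsto\theta$ and cannot be a submersion onto $\mathbb{R}^2$ --- the direct analogue of the one-line proof of \Cref{thm:bifcodim}.
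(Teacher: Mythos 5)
Your argument is correct in its essentials but follows a genuinely different route from the paper's. The paper also passes to the natural coordinates of \Cref{sec:coords}, writing the system as $\dot y=\gamma\circ f(y,\beta)$ with a single inner parameter $\beta$ and $n$ outer parameters $\gamma$, but then finishes by a direct linear-algebra computation: the $n+2$ defining functions of a cusp point --- the vector field $\hat f$, $F=\det(\partial_y\hat f)$, and $G=\langle p,B(q,q)\rangle$ --- all have vanishing derivative with respect to $\gamma$ at the critical point, so the $(n+2)\times(2n+1)$ transversality Jacobian in $(y,\beta,\gamma)$ has rank at most $n+1$ and cannot be surjective. That computation never mentions the geometry of the bifurcation set. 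Your route instead extracts the geometric consequence (the local fold locus is a union of fibres of the scalar submersion $\kappa\mapsto\kappa^W$, hence cannot contain a cuspidal edge), which is arguably more illuminating, but it obliges you to prove the two facts you flag at the end, plus one you do not: that the set of degenerate $\alpha$ near $\alpha_0$ is \emph{finite}, so that the fold set really is a single smooth level hypersurface near $\kappa_0$ rather than an arbitrary union of fibres (which could, a priori, fail to be a smooth submanifold). This does hold --- $\det(\Gamma D_{h(\alpha)}A)$ is polynomial in the one-dimensional variable $\alpha$ and cannot vanish identically near $\alpha_0$ if the unfolding is to produce nondegenerate equilibria --- but it must be said; your sentence ``locally near any of its points, precisely the smooth level hypersurface'' assumes it.

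One genuine flaw: the final ``cleaner packaging'' does not work as stated. The normal-form coefficients $(\beta_1,\beta_2)$ are \emph{not} recovered from the local configuration of positive equilibria --- any two parameter values in the same stratum of the cusp diagram yield topologically equivalent local configurations, and even the smooth reduction determines $(\beta_1,\beta_2)$ only up to a scaling action --- so the claim that $\Phi$ factors through $\theta$ does not follow from what precedes it. The rigorous version of that shortcut is precisely the paper's proof: replace $(\beta_1,\beta_2)$ by the defining functions $(F,G)$ and show that their $\gamma$-derivatives vanish \emph{at the critical point} (they are not globally independent of $\gamma$; e.g.\ $F=(\prod_i\gamma_i)\det(\partial_yf)$, whose $\gamma$-derivative vanishes only where $\det(\partial_yf)=0$). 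So keep your geometric argument as the proof and drop, or substantially rework, the closing remark.
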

\begin{proof}
After the recoordinatisation described in \Cref{sec:coords}, an $(n,n+2,n)$ mass-action CRN defines a system of ODEs on $\mathbb{R}^n_+$ of the form
\[
\dot y = \gamma \circ f(y, \beta)
\]
where $\gamma \in \mathbb{R}^n_+$ and $\beta \in \mathbb{R}_+$. The result now follows from the following lemma.
\end{proof}

\begin{lemma}
Consider a family of ODEs on any open subset of $\mathbb{R}^n$ of the form
\[
\dot y = \gamma \circ f(y,\beta) =: \hat{f}(y,\beta,\gamma)\,,
\]
where $\gamma \in \mathbb{R}^n_+$ and $\beta \in \mathbb{R}$ are parameters. This family cannot have a cusp bifurcation, unfolded by its parameters.
\end{lemma}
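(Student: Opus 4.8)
The plan is to show that a cusp bifurcation requires a two-dimensional unfolding parameter that acts on the ``equilibrium side'' of the dynamics, whereas here only $\beta$ does so; the parameter $\gamma$ merely rescales the components of the vector field by positive constants and therefore cannot contribute to the unfolding. Recall that the cusp bifurcation is a codimension-two bifurcation of an equilibrium with a one-dimensional center manifold on which, in suitable coordinates, the restricted system has the form $\dot u = \beta_1 + \beta_2 u + s u^3 + O(u^4)$ with $s \neq 0$, and the nondegeneracy/transversality conditions require that the two unfolding parameters $(\beta_1,\beta_2)$ map submersively onto the relevant jet space; see \cite[Section 8.2]{kuznetsov:2023}. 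The key structural observation is that the equilibrium set $\{y : \hat f(y,\beta,\gamma)=0\}$ is independent of $\gamma$, since $\gamma \circ f(y,\beta)=0 \iff f(y,\beta)=0$ as $\gamma \in \mathbb{R}^n_+$. Hence varying $\gamma$ moves no equilibria, and in particular it cannot create or destroy any of the equilibria that would collide in a cusp.

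First I would fix a candidate cusp point $(y_0,\beta_0,\gamma_0)$ and reduce to the center manifold. The crucial point is that the diagonal rescaling $y \mapsto \gamma \circ f(y,\beta)$ does not change the zero set, and at an equilibrium the linearization is $D_y\hat f = D_\gamma \cdot D_y f$ where $D_\gamma=\operatorname{diag}(\gamma)$; multiplying by a fixed positive diagonal matrix does not change the dimension of the kernel, so the center subspace remains one-dimensional and the reduction goes through with $\gamma$ as a parameter. On the one-dimensional center manifold one obtains a scalar equation $\dot u = g(u,\beta,\gamma)$; I would argue (using that the reduction of $\gamma\circ f$ still has $\gamma$-dependence only through a nowhere-vanishing multiplicative factor along the center direction, to leading order) that the set $\{g(u,\beta,\gamma)=0\}$ of equilibria near $(u_0,\beta_0)$ is again independent of $\gamma$ — i.e., the zero level set of $g$ in $(u,\beta)$ does not move as $\gamma$ varies.

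Then I would invoke the defining requirement for an unfolding of the cusp: the map from the parameter space to the $(\beta_1,\beta_2)$-plane of normal-form unfolding parameters must be a submersion at the bifurcation point. But since the equilibrium curve $g=0$ is independent of $\gamma$, the only parameter that can move equilibria — and hence the only parameter that can realize either of the normal-form unfolding directions — is $\beta$, a one-dimensional parameter. Concretely, after the Lyapunov–Schmidt / implicit-function reduction that solves $g(u,\beta,\gamma)=0$, the bifurcation function depends on $(\beta,\gamma)$ only through $\beta$ (up to reparametrization), so its versal unfolding inside this family has at most one effective parameter; a codimension-two bifurcation such as the cusp therefore cannot be transversally unfolded. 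This contradicts the assumption that a cusp is unfolded by $(\beta,\gamma)$, completing the proof.

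The main obstacle I expect is making rigorous the claim that the $\gamma$-dependence of the center-manifold reduction $g(u,\beta,\gamma)$ genuinely drops out of the equilibrium set, rather than merely factoring out to leading order. The honest way to handle this is not to pass to the reduced scalar equation at all, but to work directly with the full equilibrium condition $f(y,\beta)=0$: the set of equilibria of $\dot y=\gamma\circ f(y,\beta)$ is exactly $\{(y,\beta): f(y,\beta)=0\}$ for every $\gamma$, so any cusp point, together with the two branches of equilibria that coalesce there, lives over a single value (curve) of $\beta$ and is literally unchanged as $\gamma$ varies. Thus $\gamma$ cannot supply a transverse unfolding direction in the equilibrium bifurcation diagram, and since the cusp's versal unfolding is two-dimensional and purely an equilibrium bifurcation, one effective parameter $\beta$ does not suffice. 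I would phrase the final argument in exactly these intrinsic, coordinate-free terms to avoid the delicacy of tracking $\gamma$ through the center-manifold reduction.
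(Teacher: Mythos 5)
Your key observation --- that the equilibrium set $\{(y,\beta):f(y,\beta)=0\}$ of $\dot y=\gamma\circ f(y,\beta)$ is literally independent of $\gamma$ --- is correct and is indeed one ingredient of the paper's proof (it is why $\partial_\gamma\hat f$ vanishes at the bifurcation point). But your proof rests on the further claim that the cusp's transversality condition is ``purely an equilibrium bifurcation,'' so that a parameter which moves no equilibria cannot contribute to the unfolding. That claim is exactly where all the content of the lemma lives, and you assert it rather than prove it. Concretely, the cusp at $(y_0,\beta_0,\gamma_0)$ is defined by the system $\hat f=0$, $\det(\partial_y\hat f)=0$, and $G:=\langle p,B(q,q)\rangle=0$, where $B$ is the quadratic part of $\hat f$ and $p,q$ are the left and right null eigenvectors; transversal unfolding is surjectivity of the Jacobian of this system with respect to $(y,\beta,\gamma)$. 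The function $G$ is \emph{not} a priori a function of the equilibrium set alone: $B$, $p$ and $q$ all genuinely vary with $\gamma$. One must therefore check that $\partial_\gamma G$ vanishes at the bifurcation point, and this requires using the specific diagonal structure: $B$ transforms as $\gamma\circ\gamma_0^{-1}\circ B_0$ while $p$ transforms as $\gamma^{-1}\circ\gamma_0\circ p_0$ (up to normalisation), so the two factors cancel in the pairing and $G(y_0,\beta_0,\gamma)\equiv 0$. The paper's proof is precisely this computation (together with the easier observations $\partial_\gamma\hat f=0$ and $\partial_\gamma\det J=0$ at the point), which forces the $(n+2)\times(2n+1)$ Jacobian to have $n$ identically zero columns and hence rank at most $n+1$. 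Your proposal never performs this check, and you yourself flag that you cannot track the $\gamma$-dependence through the centre-manifold reduction; retreating to the ``intrinsic'' statement about the equilibrium variety does not close the gap, because you would then need a theorem identifying cusp transversality with versality of the deformation of the zero set under contact equivalence --- a genuine piece of singularity theory that you invoke implicitly but do not supply.

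A secondary imprecision: you claim that multiplying the linearisation by $\operatorname{diag}(\gamma)$ leaves the centre subspace one-dimensional. It preserves the \emph{geometric} multiplicity of the zero eigenvalue (the kernel is unchanged), but not in general the \emph{algebraic} multiplicity --- for example $\bigl(\begin{smallmatrix}1&1\\-1&-1\end{smallmatrix}\bigr)$ has a double zero eigenvalue while $\operatorname{diag}(2,1)\bigl(\begin{smallmatrix}1&1\\-1&-1\end{smallmatrix}\bigr)$ has a simple one. This does not derail the argument at the fixed candidate point, where simplicity is assumed, but it should not be stated as automatic. If you want to complete your route, the cleanest fix is to abandon the centre-manifold language and verify directly, as the paper does, that the $\gamma$-derivatives of all three defining functions of the cusp vanish at the candidate point.
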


\begin{proof} 
Let $J(y,\beta,\gamma) = \partial_y\hat{f}(y,\beta,\gamma)$ be the Jacobian matrix of $\hat{f}$ w.r.t.\ $y$, evaluated at $(y,\beta,\gamma)$, and let $B_{y,\beta,\gamma}(\cdot\,,\cdot)$ be the multilinear function representing the quadratic terms in the Taylor expansion of $\hat{f}$ w.r.t.\ $y$, evaluated at $(y,\beta,\gamma)$. Note that $B_{y,\beta,\gamma}(\cdot\,,\cdot) = \gamma \circ \gamma_0^{-1} \circ B_{y,\beta,\gamma_0}(\cdot\,,\cdot)$ and, in fact, $\gamma$ ``factors'' out of the multilinear functions representing derivatives of all orders in the Taylor expansion of $\hat{f}$.

Let us suppose that $(y_0,\beta_0,\gamma_0)$ is a potential cusp bifurcation point (not necessarily nondegenerate), i.e., following \cite[Subsection 5.4.1]{kuznetsov:2023}, $\hat{f}(y_0,\beta_0,\gamma_0) = 0$; $J(y_0,\beta_0,\gamma_0)$ has a simple zero eigenvalue; and $\langle p_0, B_{y_0,\beta_0,\gamma_0}(q_0,q_0)\rangle = 0$, where $p_0$ and $q_0$ are left and right eigenvectors of $J(y_0,\beta_0,\gamma_0)$, corresponding to the eigenvalue zero. For definiteness, we impose the normalisation conditions $|q_0|=1$ and $\langle p_0, q_0\rangle = 1$. As zero is a simple eigenvalue of $J(y_0,\beta_0,\gamma_0)$, we can smoothly continue these eigenvectors (with the same normalisation conditions) in a neighbourhood of $(y_0,\beta_0,\gamma_0)$. Write $p(y,\beta,\gamma)$ and $q(y,\beta,\gamma)$ for these continuations. Note that if we fix $(y,\beta) = (y_0, \beta_0)$ and vary only $\gamma$, then we see that $q(y_0,\beta_0,\gamma) = q_0$ and $p(y_0,\beta_0,\gamma) = (\gamma^{-1} \circ \gamma_0 \circ p_0)/C(\gamma)$, where $C(\gamma)$ is the normalisation constant chosen to ensure that $\langle p(y_0,\beta_0,\gamma), q_0\rangle = 1$.

Write
\[
F(y,\beta,\gamma):=\mathrm{det}(J(y,\beta,\gamma)), \quad G(y,\beta,\gamma) := \langle p(y,\beta,\gamma), B_{y,\beta,\gamma}(q(y,\beta,\gamma),q(y,\beta,\gamma))\rangle
\]
so that the basic conditions for cusp bifurcation become $\hat{f}=0, F=0$ and $G=0$. We have assumed that $\hat{f}(y_0,\beta_0,\gamma_0)=0, F(y_0,\beta_0,\gamma_0)=0$ and $G(y_0,\beta_0,\gamma_0)=0$. %\textcolor{red}{[Because we are only interested in when $G=0$, we do not care about the nonuniqueness of the eigenvectors, and hence the nonuniqueness in the definition of $G$: any smooth choice would do. If desired, we could normalise the eigenvectors in various ways to make $G$ unique -- the conclusions below would be unchanged.]} 
The bifurcation at $(y_0,\beta_0,\gamma_0)$ is unfolded transversely by the parameters only if the following $(n+2) \times (2n+1)$ Jacobian matrix is surjective:
\[
\left.\frac{\partial(\hat{f},F,G)}{\partial(y,\beta,\gamma)}\right|_{y_0,\beta_0,\gamma_0}:=\left.\left(\begin{array}{ccc}\partial_y\hat{f}(y,\beta,\gamma) & \partial_\beta\hat{f}(y,\beta,\gamma)&\partial_\gamma\hat{f}(y,\beta,\gamma)\\\partial_yF(y,\beta,\gamma) & \partial_\beta F(y,\beta,\gamma)&\partial_\gamma F(y,\beta,\gamma)\\\partial_yG(y,\beta,\gamma) & \partial_\beta G(y,\beta,\gamma)&\partial_\gamma G(y,\beta,\gamma)\end{array}\right)\right|_{y=y_0,\gamma=\gamma_0,\beta=\beta_0}\,.
\]
We claim, however, that, $\partial_\gamma\hat{f}(y_0,\beta_0,\gamma_0)$, $\partial_\gamma F(y_0,\beta_0,\gamma_0)$ and $\partial_\gamma G(y_0,\beta_0,\gamma_0)$ are all zero and so the above matrix has rank at most $n+1$ and surjectivity is impossible. 
\begin{enumerate}
\item $D_\gamma\hat{f}(y_0,\beta_0,\gamma_0) = f(y_0,\beta_0) = 0$, since $\hat{f}(y_0,\beta_0,\gamma_0) = 0$ implies $f(y_0,\beta_0)=0$.
\item $F(y,\beta,\gamma):=(\Pi_i\gamma_i)\mathrm{det}(\partial_yf(y,\beta))$, and so $F(y_0,\beta_0,\gamma_0) = 0$ implies $\mathrm{det}(\partial_yf(y_0,\beta_0)) = 0$. Consequently $\partial_\gamma F(y_0,\beta_0,\gamma_0) = 0$.
\item From above, $B_{y_0,\beta_0,\gamma}(\cdot\,,\cdot) = \gamma \circ \gamma_0^{-1} \circ B_{y_0,\beta_0,\gamma_0}(\cdot\,,\cdot)$, $q(y_0,\beta_0,\gamma) = q_0$, $p(y_0,\beta_0,\gamma) = (\gamma^{-1} \circ \gamma_0 \circ p_0)/C(\gamma)$, and so
\begin{eqnarray*}
G(y_0,\beta_0,\gamma) &=& \left\langle p(y_0,\beta_0,\gamma), B_{y_0,\beta_0,\gamma}(q(y_0,\beta_0,\gamma),q(y_0,\beta_0,\gamma))\right\rangle\\
&=& \left\langle \gamma^{-1} \circ \gamma_0 \circ p_0, \gamma \circ \gamma_0^{-1} \circ B_{y_0,\beta_0,\gamma_0}(q_0,q_0)\right\rangle/C(\gamma)\\
&=& \left\langle p_0, B_{y_0,\beta_0,\gamma_0}(q_0,q_0)\right\rangle/C(\gamma)\\
&=& G(y_0,\beta_0,\gamma_0)/C(\gamma)\\
&=& 0.
\end{eqnarray*}
Consequently, it is clear that $\partial_\gamma G(y_0,\beta_0,\gamma_0)=0$, as claimed. 
\end{enumerate}
%\textcolor{red}{[Note that if, for example, we had insisted on choosing $p$ such that $\langle p,q\rangle = 1$, the $\gamma$-dependent normalisation constant would simply have factored out in the above calculation, still giving $0$.]}
\end{proof}

\begin{remark}
The fact that cusp bifurcations are ruled out in $(n,n+2,n)$ mass-action networks does not rule out the possibility of three or more positive nondegenerate equilibria in such networks, and indeed it is possible to find quadratic $(2,4,2)$ mass-action networks with three positive nondegenerate equilibria, provided we allow sufficiently high product molecularity. The following network is easily checked to be such an example.  
\begin{align*}
\X+\Y \to \X \to \0 \to 5\X+\Y \qquad 2\Y \to \X+4\Y
\end{align*}
\end{remark}

\section{Conclusions}
\label{sec:conclusions}

We have fully characterised all generic bifurcations of positive equilibria which can occur in planar, quadratic mass-action networks with up to four reactions and molecularity of three or less. Such networks admit two bifurcations of codimension one, namely fold and Andronov--Hopf bifurcations; and two bifurcations of codimension two, namely Bogdanov--Takens and Bautin bifurcations. They admit no other generic bifurcations of equilibria of any codimension. We have also discussed the occurrence of various degenerate bifurcations in this class of networks, including vertical Andronov--Hopf bifurcations, and a bifurcation we have termed a vertical Bogdanov--Takens bifurcation. It is interesting to note that these highly degenerate bifurcations occur fairly frequently in small mass-action networks.

We have shown that cusp bifurcation in planar, quadratic mass-action networks requires at least five reactions. One reason for an interest in cusp bifurcation is that it is relevant to the study of {\em multistability}: it gives us one way of finding networks admitting two positive, {\em asymptotically stable} equilibria. We plan to study quadratic $(2,5,2)$ networks admitting cusp bifurcations in future work. In fact, quadratic $(2,5,2)$ networks include the simplest {\em bimolecular} networks admitting cusp bifurcation.  

Apart from cusp bifurcations, in quadratic $(2,5,2)$ mass-action networks we expect to see networks admitting supercritical Bautin bifurcations where a stable limit cycle can coexist with a stable equilibrium. We also hope to find networks admitting bifurcations of codimension 3 (e.g.\ a degenerate Bogdanov--Takens bifurcation or a degenerate Bautin bifurcation, sometimes termed a Takens--Hopf bifurcation).

There are some open questions about the quadratic, trimolecular, $(2,4,2)$ mass-action networks we have studied in this paper. One question is whether any of them, other than the unique network which admits a Bautin bifurcation, admits more than one limit cycle. It is a nontrivial fact that multiple limit cycles cannot occur in a planar quadratic ODE with an invariant line \cite{coppel:1989}, and as a consequence of this result, multiple limit cycles are ruled out in many of the 198 quadratic, trimolecular $(2,4,2)$ networks which admit Andronov--Hopf bifurcation (in 106 networks at least one of the two coordinate axes is invariant). However, for the remaining 91 networks (those with no invariant axis and admitting no Bautin bifurcation), we cannot claim definitively that none admits multiple limit cycles. Another open question is whether homoclinic orbits can occur in any of the seven quadratic, trimolecular $(2,4,2)$ networks which admit both fold and Andronov--Hopf bifurcations, but where the bifurcation sets do not meet and no Bogdanov--Takens bifurcation occurs.

We have identified 10 bimolecular $(2,4,2)$ networks (those marked with an orange bullet in \Cref{tab:fold_bimolecular}) that admit multiple nonnegative asymptotically stable equilibria. It is an open problem whether there are any further bimolecular $(2,4,2)$ mass-action networks with the same property, and which are \emph{not} dynamically equivalent to any of the 10 we have found. Answering this question requires studying boundary equilibria.

\bibliographystyle{abbrv}
\bibliography{biblio}

\appendix
\section{Vertical Andronov--Hopf bifurcation}
\label{sec:A-H_vertical}

In \Cref{thm:A-H} we have identified 20 networks which admit a vertical Andronov--Hopf bifurcation (the networks in \Cref{tab:A-H} that are marked with \blue{$0$} or \gbllt \bbllt \obllt). In \Cref{tab:A-H_vertical} below we have listed the 20 networks along with the conditions on the rate constants under which the systems have a positive equilibrium that is a center. In \Cref{lem:A-H_vertical} below we show directly (i.e., without computation of focal values) that these 20 systems indeed have a center.

\begin{lemma} \label{lem:A-H_vertical}
Each network in \Cref{tab:A-H_vertical} has a center if the rate constants are as indicated.
\end{lemma}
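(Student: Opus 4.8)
The plan is to establish the claim network by network, producing in each case an explicit first integral or a time-reversing symmetry rather than appealing to the vanishing of the focal values. Fix one of the twenty networks and write its mass-action system as $\dot x = f(x,y)$, $\dot y = g(x,y)$, with the rate constants subject to the relations recorded in the corresponding row of \Cref{tab:A-H_vertical}. By the construction underlying \Cref{lem:198} we already know that, under these relations, the network has a unique positive equilibrium $e_\star$ in the open quadrant, and that $J(e_\star)$ has a pair of nonzero purely imaginary eigenvalues, so $\det J(e_\star)>0$ and $\tr J(e_\star)=0$. It therefore suffices to show that $e_\star$ is a center rather than a weak focus; for this it is enough to exhibit, on the open positive quadrant, either an analytic first integral with a strict (Morse) local extremum at $e_\star$, or a smooth involution of the quadrant fixing $e_\star$ under which the vector field reverses time. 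Either property confines the orbits near $e_\star$ to closed curves.

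In every case the relevant integrability mechanism is one of the classical types of quadratic center (Hamiltonian, Lotka--Volterra/Darboux, or reversible; cf.\ the Kapteyn--Bautin classification in \cite[Section 8.7]{dumortier:llibre:artes:2006}). For the simplest networks, multiplication of the vector field by a monomial Dulac factor $\mu(x,y)=x^p y^q$ --- which alters only the time parametrisation inside the quadrant --- makes the divergence vanish identically, yielding a Hamiltonian $H$ with $\partial_y H=\mu f$ and $\partial_x H=-\mu g$ that is readily checked to be Morse at $e_\star$. For the remaining networks we exhibit a Darboux first integral of the form
\[
H(x,y)=x^{a}\,y^{b}\,C(x,y)^{c}\,\exp\!\big(\ell(x,y)\big),
\]
where $x=0$ and $y=0$ are the invariant coordinate axes, $C$ is one further invariant line or conic of the system, and $\ell$ is affine; verifying $\dot H\equiv 0$ reduces to matching cofactors of the invariant curves, and the extremum property at $e_\star$ is again elementary. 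Some of the networks reduce, after a monomial change of coordinates, to a Lotka--Volterra system and so fall under the Lotka--Volterra subcase; one or two others are most cleanly handled by observing that, after such a change of coordinates, the system is invariant under a reflection composed with time reversal.

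The proof is thus organisational rather than conceptual: for each of the twenty networks one must recognise which mechanism applies, determine the Dulac exponents $(p,q)$, or the coefficients of the invariant conic $C$ together with the affine function $\ell$, or the coordinate change and reflection, and then check $\dot H\equiv 0$ and the Morse condition (respectively the symmetry). Each such verification is elementary but there are twenty of them; they are carried out in full in \cite{boros_github}, and \Cref{tab:A-H_vertical} lists, alongside each network and its parameter relations, the corresponding first integral (or symmetry) from which the center property is read off. The only points requiring care are that the first integral be genuinely defined and analytic in a neighbourhood of $e_\star$ --- which holds because $e_\star$ lies in the open positive quadrant, where all the factors $x$, $y$, $C(x,y)$ appearing in $H$ are positive --- and that $e_\star$ be a nondegenerate extremum of $H$ rather than a degenerate critical point, which is exactly where the parameter relations of \Cref{tab:A-H_vertical} (equivalently, $L_1=L_2=L_3=0$ in the language of \Cref{thm:A-H}) enter.
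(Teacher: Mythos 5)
Your strategy is the paper's strategy: treat the twenty networks case by case and, for each, exhibit one of the classical quadratic-center mechanisms --- a Hamiltonian obtained after multiplying by a monomial Dulac factor, a Lotka--Volterra structure (possibly after a linear change of coordinates), or a time-reversing reflection symmetry obtained after a shear or translation. The paper's proof does exactly this, plus one mechanism you subsume but do not single out: for networks 1--5, dividing the right-hand side by $x$ on the open quadrant yields a \emph{linear} system, so the center is immediate.

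The gap is that you never actually carry out the case analysis. You do not say which of the twenty networks falls under which mechanism, you do not exhibit a single Dulac exponent, invariant curve, shear matrix, or first integral, and you discharge all twenty verifications to \cite{boros_github}. For a lemma whose entire content is a finite list of concrete systems, the assignment of mechanism to network and the accompanying checks \emph{are} the proof; a taxonomy of possible mechanisms together with a pointer to a repository is a plan, not an argument. (The paper's proof does the assignment explicitly: networks 1--5 become linear after division by $x$; network 6 has the invariant line $\kappa_1 x=\kappa_2 y$ and becomes Lotka--Volterra after a shear; networks 7--9 and 14--17 become reversible after a shear; networks 10--13 are already Lotka--Volterra; network 18 becomes reversible after a translation; networks 19--20 become Hamiltonian after division by $x$, with $H(x,y)=\kappa_1 xy+\tfrac{\kappa_2}{2}y^2-\kappa_3 y-\kappa_4\log x$ in case 19.) Two smaller inaccuracies: \Cref{tab:A-H_vertical} does not list first integrals or symmetries alongside the networks, only the parameter relations; and the Morse/nondegeneracy condition at $e_\star$ is not ``exactly where $L_1=L_2=L_3=0$ enters'' --- those relations are what place a pair of purely imaginary eigenvalues at $e_\star$ in the first place, and once a first integral (or reversibility) is in hand, the purely imaginary eigenvalues already force the closed local orbit structure.
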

\begin{proof}
It is straightforward to verify that each of the 20 systems has a positive equilibrium with a pair of purely imaginary eigenvalues whenever the conditions on rate constants in \Cref{tab:A-H_vertical} hold. Below we argue why the equilibrium is a center.
\begin{description}
\item[Networks 1--5] Restricting the mass-action differential equation to the positive quadrant and dividing the r.h.s.\ by $x$ results in a linear equation. An equilibrium of a planar linear system with a pair of purely imaginary eigenvalues is a center.
\item[Network 6] The line $\kappa_1 x = \kappa_2 y$ is invariant. After the linear coordinate change with the shear map $\begin{pmatrix} 1 & 0 \\ -\frac{\kappa_1}{\kappa_2} & 1 \end{pmatrix}$, the differential equation is a Lotka--Volterra equation. A positive equilibrium of a planar Lotka--Volterra system with a pair of purely imaginary eigenvalues is a center 
\cite[p.~213]{andronov:1973}.

\item[Networks 7--9 and 14--17] Each network has a unique positive equilibrium $(x^*,y^*)$ for all rate constants satisfying the conditions in \Cref{tab:A-H_vertical}. After the linear coordinate change with the shear map $\begin{pmatrix} 1 & -\frac{x^*}{y^*} \\ 0 & 1 \end{pmatrix}$, the differential equation is reversible w.r.t.\ the vertical axis (i.e., $\dot{x}=f(x,y)$, $\dot{y}=g(x,y)$ with $f$ being even in $x$ and $g$ being odd in $x$). For reversible systems, an equilibrium (on the line of reflection) with a pair of purely imaginary eigenvalues is a center.
\item[Networks 10--13] Each mass-action differential equation is a Lotka--Volterra equation.
\item[Network 18] By the coordinate change $x \mapsto x+x^*$ (where $(x^*,y^*)$ is the unique positive equilibrium for all rate constants satisfying the conditions in \Cref{tab:A-H_vertical}) we obtain a system which is reversible w.r.t.\ the vertical axis.
\item[Networks 19 and 20] Restricting the mass-action differential equation to the positive quadrant and dividing the r.h.s.\ by $x$ results in a Hamiltonian system. The constant of motion is $H(x,y)=\kappa_1 x y+\frac{\kappa_2}{2}y^2-\kappa_3 y-\kappa_4 \log x$ for network 19, and it is similar for network 20 (replace the coefficient $\frac{\kappa_2}{2}$ by $\kappa_2$).
\end{description}
\end{proof}

\begin{table}[!ht]
\centering
{\footnotesize
\begin{tabular}{r|*{4}{r@{\hspace{4pt}$\to$\hspace{4pt}}l}|ll}
 1 & $2\X$ & $3\X$ & $\X+\Y$ &  $\0$ & $2\X$ &  $\Y$         & $\X$ & $2\X$ &
 $\kappa_1 = \kappa_2 +2\kappa_3$ & $\kappa_2<\kappa_3$ \\
 2 & $2\X$ & $3\X$ & $\X+\Y$ &  $\0$ & $2\X$ & $2\Y$         & $\X$ & $2\X$ &
 $\kappa_1 = \kappa_2 +2\kappa_3$ & $\kappa_2<2\kappa_3$ \\
 3 & $2\X$ & $3\X$ & $\X+\Y$ &  $\0$ & $2\X$ & $3\Y$         & $\X$ & $2\X$ &
 $\kappa_1 = \kappa_2 +2\kappa_3$ & $\kappa_2<3\kappa_3$ \\
 4 & $2\X$ & $3\X$ & $\X+\Y$ &  $\0$ & $2\X$ & $\X+2\Y$ & $\X$ & $2\X$ &
 $\kappa_1 = \kappa_2 + \kappa_3$ & $\kappa_2<2\kappa_3$ \\
 5 & $2\X$ & $3\X$ & $\X+\Y$ &  $\0$ & $2\X$ & $2\X+\Y$ & $\X$ & $2\X$ &
 $\kappa_1 = \kappa_2$          & $\kappa_2<\kappa_3$   \\
 6 & $2\X$ & $3\X$ & $\X+\Y$ &  $\Y$ &  $\X$ & $2\X+\Y$ & $\Y$ &  $\0$ &
 $\kappa_1(\kappa_3+\kappa_4) = \kappa_2\kappa_3$ & \\
 7 & $2\X$ & $3\X$ & $\X+\Y$ &  $\Y$ &  $\X$ & $2\X+\Y$ & $\Y$ &  $\X$ &
 $2\kappa_1 = \kappa_2$ and $\kappa_3=\kappa_4$ & \\
 8 & $2\X$ & $3\X$ & $\X+\Y$ &  $\Y$ &  $\X$ & $2\X+\Y$ & $\Y$ & $2\X$ &
 $2\kappa_1 = \kappa_2$ and $\kappa_3=\kappa_4$ & \\
 9 & $2\X$ & $3\X$ & $\X+\Y$ &  $\Y$ &  $\X$ & $2\X+\Y$ & $\Y$ & $3\X$ &
 $2\kappa_1 = \kappa_2$ and $\kappa_3=\kappa_4$ & \\
10 & $2\X$ & $3\X$ & $\X+\Y$ & $2\Y$ & $2\Y$ & $\0$          & $\Y$ &  $\0$ &
$\kappa_2 = 2\kappa_3$ & $\kappa_1<\kappa_2$ \\
11 & $2\X$ & $3\X$ & $\X+\Y$ & $3\Y$ & $2\Y$ & $\0$          & $\Y$ &  $\0$ &
$\kappa_2 = 2\kappa_3$ & $\kappa_1<2\kappa_2$  \\
12 & $2\X$ & $3\X$ & $\X+\Y$ & $2\Y$ & $2\Y$ & $\0$          & $\X$ & $2\X$ &
$\kappa_1 = \kappa_2$ & $2\kappa_3<\kappa_2$ \\
13 & $2\X$ & $3\X$ & $\X+\Y$ & $3\Y$ & $2\Y$ & $\0$          & $\X$ & $2\X$ &
$\kappa_1 = 2\kappa_2$ & $2\kappa_3<\kappa_2$ \\
14 & $2\X$ & $3\X$ & $\X+\Y$ & $2\Y$ & $2\Y$ & $\0$          & $\0$ &  $\X$ &
$4\kappa_1\kappa_3=\kappa_2(\kappa_2+2\kappa_3)$ & $\kappa_2<\kappa_1$ \\
15 & $2\X$ & $3\X$ & $\X+\Y$ & $3\Y$ & $2\Y$ & $\0$          & $\0$ &  $\X$ &
$2\kappa_1\kappa_3=\kappa_2(\kappa_2+2\kappa_3)$ & $2\kappa_2<\kappa_1$ \\
16 & $2\X$ & $3\X$ & $\X+\Y$ & $2\Y$ & $2\Y$ & $\0$          & $\Y$ &  $\X+\Y$ &
$4\kappa_1\kappa_3=\kappa_2(\kappa_2+2\kappa_3)$ & $2\kappa_3<\kappa_2$ \\
17 & $2\X$ & $3\X$ & $\X+\Y$ & $3\Y$ & $2\Y$ & $\0$          & $\Y$ &  $\X+\Y$ &
$2\kappa_1\kappa_3=\kappa_2(\kappa_2+2\kappa_3)$ & $2\kappa_3<\kappa_2$ \\
18 & $2\X$ & $3\X$ & $\X+\Y$ &  $\X$ &  $\X$ & $\0$          & $\Y$ &  $\X+2\Y$ &
$2\kappa_1\kappa_4=\kappa_2\kappa_3$ & \\
19 & $2\X$ & $3\X$ & $\X+\Y$ & $2\X$ &  $\X$ & $\0$          & $\0$ &  $\Y$ &
$\kappa_1=\kappa_2$ & $4\kappa_1\kappa_4 < \kappa_3^2$ \\
20 & $2\X$ & $3\X$ & $\X+\Y$ & $3\X$ &  $\X$ & $\0$          & $\0$ &  $\Y$ &
$\kappa_1=\kappa_2$ & $8\kappa_1\kappa_4 < \kappa_3^2$ \\
\end{tabular}
}
\caption{The 20 dynamically nonequivalent, quadratic, trimolecular $(2,4,2)$ mass-action networks that admit a vertical Andronov--Hopf bifurcation. In networks 7--9, the Andronov--Hopf bifurcation can also be subcritical and supercritical, while in networks 1--6 and 10--20 the Andronov--Hopf bifurcation is always vertical. We have indicated at the end of each line where in parameter space the vertical Andronov--Hopf bifurcation occurs.}
\label{tab:A-H_vertical}
\end{table}

\end{document}